\theoremstyle{plain}
\newtheorem{theorem}{Theorem}[section]
\newtheorem*{MainThm}{Main Theorem}
\newtheorem*{main result}{Simplified Corollary \ref{main corollary 1}}
\newtheorem*{mainthmsimp}{Simplified Theorem \ref{Thm: MSC}}
\newtheorem*{TangleBound}{Theorem \ref{Thm: Meridional Planar}}
\newtheorem*{theorem*}{Theorem}
\newtheorem*{conjecture}{Conjecture}
\newtheorem*{TangleCalcI}{Tangle Calculations I}
\newtheorem*{TangleCalcII}{Tangle Calculations II}
\newtheorem{proposition}[theorem]{Proposition}
\newtheorem{corollary}[theorem]{Corollary}
\newtheorem{lemma}[theorem]{Lemma}
\theoremstyle{definition}
\newtheorem*{definition}{Definition}
\newtheorem*{remark}{Remark}
\newtheorem*{example}{Example}
\newcommand{\Z}{\mathbb Z}
\newcommand{\N}{\mathbb N}
\newcommand{\nil}{\varnothing}
\newcommand{\tild}{\widetilde}
\newcommand{\wihat}{\widehat}
\newcommand{\defn}[1]{\textbf{#1}}
\newcommand{\boundary}{\partial}
\newcommand{\mc}[1]{\mathcal{#1}}
\newcommand{\ob}[1]{\overline{#1}}
\newcommand{\cls}{\operatorname{cl}} 
\newcommand{\genus}{\operatorname{genus}} 
\newcommand{\interior}{\operatorname{int}} 
\newcommand{\inter}[1]{\mathring{#1}}
\def\hyph{-\penalty0\hskip0pt\relax}
      \def\@setcopyright{}
      \def\serieslogo@{}
\begin{document}

   \title[Boring Split Links]{Boring Split Links}
   \author{Scott A Taylor}
   \email{sataylor@colby.edu}
   \thanks{This research was partially supported by a grant from the National Science Foundation.}

   \date{\today}
\begin{abstract}
Boring is an operation which converts a knot or two\hyph component link in a 3--manifold into another knot or two-component link. It generalizes rational tangle replacement and can be described as a type of 2--handle attachment. Sutured manifold theory is used to study the existence of essential spheres and planar surfaces in the exteriors of knots and links obtained by boring a split link. It is shown, for example, that if the boring operation is complicated enough, a split link or unknot cannot be obtained by boring a split link. Particular attention is paid to rational tangle replacement. If a knot is obtained by rational tangle replacement on a split link, and a few minor conditions are satisfied, the number of boundary components of a meridional planar surface is bounded below by a number depending on the distance of the rational tangle replacement. This result is used to give new proofs of two results of Eudave-Mu\~noz and Scharlemann's band sum theorem.
\end{abstract}

 \maketitle
 
\section{Introduction}

\subsection{Refilling and Boring}
Given a genus two handlebody $W$ embedded in a 3--manifold $M$, a knot or two-component link can be created by choosing an essential disc $\alpha \subset W$ and boundary-reducing $W$ along $\alpha$. Then $W-\inter{\eta}(\alpha)$ is the regular neighborhood of a knot or link $L_\alpha$.  We say that the exterior $M[\alpha]$ of this regular neighborhood is obtained by \defn{refilling} the meridian disc $\alpha$.  Similarly, given a knot or link $L_\alpha \subset M$ we can obtain another knot or link $L_\beta$ by the following process:
\begin{enumerate}
\item Attach an arc to $L_\alpha$ forming a graph
\item Thicken the graph to form a genus two handlebody $W$
\item Choose a meridian $\beta$ for $W$ and refill $\beta$.
\end{enumerate}

Refilling the meridian $\alpha$ of the attached arc returns $L_\alpha$.  Any two knots in $S^3$ can be related by such a move if we allow $\alpha$ and $\beta$ to be disjoint: just let $W$ be a neighborhood of the wedge of the two knots.  We'll restrict attention, therefore, to meridians of $W$ which cannot be isotoped to be disjoint.  If a knot or link $L_\beta$ can be obtained from $L_\alpha$ by this operation say that $L_\beta$ is obtained by \defn{boring} $L_\alpha$.  Since the relation is symmetric we may also say that $L_\alpha$ and $L_\beta$ are \defn{related by boring}.

Boring generalizes several well-known operations in knot theory. Band sums, crossing changes, generalized crossing changes, and, more generally, rational tangle replacement can all be realized as boring.  The band move from the Kirby calculus \cite{FR,K} is also a type of boring.  If $W$ is the standard genus 2 handlebody in $S^3$ and $L_\alpha$ is the unlink of two components then all tunnel number 1 knots can be obtained by boring $L_\alpha$ using $W$.

If $L_\alpha$ and $L_\beta$ are related by boring, it is natural to ask under what circumstances both links can be split, both the unknot, both composite, etc.  Many of these questions have been effectively addressed for special types of boring, such as rational tangle replacement \cite{EM2}.  Following \cite{S5}, this paper will focus on the exteriors $M[\alpha]$ and $M[\beta]$ of the knots $L_\alpha$ and $L_\beta$, respectively.  In that paper, Scharlemann conjectured that, with certain restrictions (discussed in Section \ref{RM}), if $M[\alpha]$ and $M[\beta]$ are both reducible or boundary-reducible then either $W$ is an unknotted handlebody in $S^3$ or $\alpha$ and $\beta$ are positioned in a particularly nice way in $W$.  He was able to prove his conjecture (with slightly varying hypotheses and conclusions) when $M - \inter{W}$ is boundary-reducible, when $|\alpha \cap \beta| \leq 2$, or when one of the discs is separating.  

This paper looks again at these questions and completes, under stronger hypotheses, the proof of Scharlemann's conjecture except when $M = S^3$ and $M[\alpha]$ and $M[\beta]$ are solid tori.  With these stronger hypotheses, however, we reach conclusions which are stronger than those obtained in \cite{S5}.  Even in the one situation which is not completed, we do gain significant insight. The remaining case is finally completed in \cite{T}. Here is a simplified version of one of the main theorems:

\begin{mainthmsimp}
Suppose that $M$ is $S^3$ or the exterior of a link in $S^3$ and that $M - \inter{W}$ is irreducible and boundary-irreducible. If $\alpha$ and $\beta$ cannot be isotoped to be disjoint, then at least one of $M[\alpha]$ or $M[\beta]$ is irreducible. Furthermore, if one is boundary-reducible (e.g. a solid torus) then the other is not reducible.
\end{mainthmsimp}

The conclusions of Theorem \ref{Thm: MSC} are an ``arc-version'' of the conclusions of the main theorem of \cite{S4} which considers surgeries on knots producing reducible 3--manifolds.  The methods of this paper are similar in outline to those of \cite{S4} but differ in detail. 

Perhaps the most interesting application of these techniques to rational tangle replacement is the following theorem, which generalizes some results of Eudave-Mu\~noz and Scharlemann:
\begin{TangleBound}
Suppose that $L_\beta$ is a knot or link in $S^3$ and that $B' \subset S^3$ is a 3--ball intersecting $L_\beta$ so that $(B',B' \cap L_\beta)$ is a rational tangle. Let $(B',r_\alpha)$ be another rational tangle of distance $d \geq 1$ from $r_\beta = B' \cap L_\beta$ and let $L_\alpha$ be the knot obtained by replacing $r_\beta$ with $r_\alpha$. Let $(B,\tau) = (S^3 - \inter{B}',L_\beta - \inter{B}')$. Suppose that $L_\alpha$ is a split link and that $(B,\tau)$ is prime. Then $L_\beta$ is not a split link or unknot. Furthermore, if $L_\beta$ has an essential properly embedded meridional planar surface with $m$ boundary components, it contains such a surface $\ob{Q}$ with $|\boundary \ob{Q}| \leq m$ such that either $\ob{Q} \subset B$ or
\[
|\ob{Q} \cap \boundary B|(d - 1) \leq |\boundary \ob{Q}| - 2
\]
\end{TangleBound}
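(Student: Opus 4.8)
The plan is to realize the rational tangle replacement as a boring operation, to apply Theorem~\ref{Thm: MSC} and the planar--surface output of the sutured manifold argument behind it to the resulting genus two handlebody, and then to convert that output into a statement about $\partial B$ and the distance $d$ by a count tailored to rational tangles. \emph{Set-up.} As recalled in the introduction, replacing $r_\beta$ by $r_\alpha$ inside $B'$ is a boring: there is a genus two handlebody $W$ together with essential meridian discs $\alpha,\beta\subset W$ such that, with $M=S^3$, the refillings $M[\alpha]$ and $M[\beta]$ are the exteriors of $L_\alpha$ and $L_\beta$. Two features matter. First, $M-\inter{W}$ is the exterior of a connected spatial graph (a neighborhood of $L_\beta$ together with an arc joining the two strands of $r_\beta$), so it is irreducible, and the sphere $\partial B=\partial B'$ meets $W$ in four meridian discs of $L_\beta$, so that $P:=\partial B\cap(M-\inter{W})$ is a four\hyph punctured sphere along which $(B,\tau)$ occurs as a summand; since $(B,\tau)$ is prime, a routine check then shows $M-\inter{W}$ is also boundary\hyph irreducible. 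Second, because the two rational tangles have distance $d\ge 1$, the discs $\alpha$ and $\beta$ cannot be isotoped to be disjoint, and $|\alpha\cap\beta|$ is the standard linear function of $d$; it is through this identity that $d$ will enter the final estimate.

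\emph{$L_\beta$ is neither split nor an unknot.} Since $L_\alpha$ is a split link, $M[\alpha]$ is reducible. By Theorem~\ref{Thm: MSC}, whose hypotheses were verified above, at least one of $M[\alpha]$, $M[\beta]$ is irreducible and, if one of them is boundary\hyph reducible, the other is not reducible. As $M[\alpha]$ is reducible, this forces $M[\beta]$ to be neither reducible nor boundary\hyph reducible, i.e.\ $L_\beta$ is neither a split link nor the unknot.

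\emph{The meridional planar surface.} Take $\ob{Q}$ to be an essential meridional planar surface for $L_\beta$ with $|\partial\ob{Q}|$ as small as possible; then $|\partial\ob{Q}|\le m$, and it suffices to prove the displayed dichotomy for this $\ob{Q}$. Put $\ob{Q}$, a reducing sphere $S\subset M[\alpha]$, and $W,\alpha,\beta$ into the relative position used to prove Theorem~\ref{Thm: MSC}, and run the sutured manifold argument on $M-\inter{W}$. It yields one of two outcomes. Either $\ob{Q}$ can be isotoped off $W$; then, the curves of $\ob{Q}\cap\partial W$ being meridional after refilling, $\ob{Q}$ also comes disjoint from $\partial B$, so it lies either in $B$ or in a rational tangle exterior, and the latter, being a handlebody, carries no essential meridional planar surface --- hence $\ob{Q}\subset B$. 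Or $\ob{Q}$ must meet the co\hyph cores of the two $1$\hyph handles of $W$, so $\ob{Q}\cap\partial B\ne\nil$, and I would argue along $\partial B$ piece by piece: the circles of $\ob{Q}\cap\partial B$ may be taken essential in $P$ and of a common slope $\gamma$, and the way $\ob{Q}$ crosses $W$ forces $\gamma$ to be the slope of $r_\alpha$; then each component $F$ of $\ob{Q}\cap B'$ is an essential planar surface in the rational tangle exterior of $r_\beta$ whose boundary circles on $\partial B$ have slope at distance $d$ from that of $r_\beta$, and the standard rational\hyph tangle estimate shows that $F$ has at least $d-1$ meridional boundary circles for each of its boundary circles on $\partial B$. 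Summing over the components $F$ of $\ob{Q}\cap B'$ and using that each circle of $\ob{Q}\cap\partial B$ lies on exactly one such $F$, the number of meridional boundary circles of $\ob{Q}$ is at least $|\ob{Q}\cap\partial B|(d-1)$; sharpening the count at an outermost piece (or via a global Euler characteristic comparison) then upgrades this to
\[
|\ob{Q}\cap\partial B|(d-1)\le|\partial\ob{Q}|-2 .
\]

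\emph{Main obstacle.} The crux is the third paragraph. First, one must extract from the intersection\hyph graph and sutured manifold machinery behind Theorem~\ref{Thm: MSC} not merely the reducibility dichotomy but enough control over a genuine essential surface $\ob{Q}$ to pin down the boundary slope of $\ob{Q}\cap\partial B$ and to make the piecewise count meaningful. Second, one must establish the rational\hyph tangle estimate ``at least $d-1$ meridional boundary circles per $\partial B$\hyph circle'' rather than some weaker bound; this is precisely where the value of $|\alpha\cap\beta|$ as a function of $d$ is consumed, and obtaining the clean linear bound together with the additive $-2$ requires a careful innermost/outermost analysis rather than a soft argument.
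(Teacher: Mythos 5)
Your first two paragraphs are essentially the paper's route: the rational tangle replacement is realized as refilling meridians of $W=\eta(L_\beta)\cup B'$, the needed incompressibility comes from primeness of $(B,\tau)$ (in the paper this is Lemma \ref{Lem: Compressing Tangles} and Lemma \ref{Lem: Tangle Compressing}, which show that $\boundary W - a$ is incompressible in $N$ --- note you do not need and should not claim full boundary-irreducibility of $N$), and then Theorem \ref{Thm: MSC Unsymmetrized} (or \ref{Thm: MSC}) rules out $L_\beta$ being split or unknotted once $S^3[\alpha]$ is reducible. That part is sound.

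The third paragraph is where the proof is missing, and it diverges from the paper in two ways that create genuine gaps. First, taking $\ob{Q}$ to minimize $|\boundary\ob{Q}|$ over all essential meridional planar surfaces does not produce a surface to which the sutured manifold machinery applies: the Main Theorem needs a parameterizing surface $Q=\ob{Q}\cap N$ with no compressions, no $a$-boundary compressing discs, and no $a$-torsion $2g$-gons. The paper manufactures such a $\ob{Q}$ from the given $\ob{R}$ via Theorem \ref{Thm: Constructing Q} and Corollary \ref{Cor: Constructing Q} (discarding components along the way), and it is this construction --- not a simple boundary-minimization --- that guarantees both $|\boundary\ob{Q}|\le m$ and the absence of $a$-torsion $2g$-gons. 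Second, you never invoke the actual numerical output of the Main Theorem. Once $Q$ has been tamed, the Main Theorem yields $-2\chi(\ob{Q})\ge K(\ob{Q})$. The distance $d$ enters through $K$, not through any per-piece count in $B'$: since every arc of $b-a$ is meridional, $\nu=0$ and $\Delta=2d$, so the Tangle Calculations give
\[
K(\ob{Q})\ \ge\ 2q(d-1)+2q^*(2d-1)\ \ge\ 2(q+2q^*)(d-1),
\]
while planarity gives $-2\chi(\ob{Q})=2|\boundary\ob{Q}|-4$, and a small isotopy identifies $q+2q^*$ with $|\ob{Q}\cap\ob{\beta}|$ (equivalently $\tfrac12|\ob{Q}\cap\boundary B|$). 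The inequality then follows by arithmetic. In place of this you propose an intersection-graph argument inside the rational tangle exterior --- pinning down a common slope of $\ob{Q}\cap\boundary B$, a ``standard rational-tangle estimate'' of $d-1$ meridians per $\boundary B$-circle, and a ``sharpening at an outermost piece'' for the extra $-2$ --- none of which is established, and all of which is precisely the combinatorics that the sutured manifold approach is designed to bypass. As written this is a plan, not a proof, and the plan replaces the two crucial ingredients (Corollary \ref{Cor: Constructing Q} to tame $\ob{Q}$, and $-2\chi(\ob{Q})\ge K(\ob{Q})$ to get the numerics) with steps that would themselves require substantial new argument.
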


One consequence of this is a new proof of Scharlemann's band sum theorem: If the unknot is obtained by attaching a band to a split link then the band sum is the connected sum of unknots. This, and other rational tangle replacement theorems, are proved in Section \ref{RTR}.

The main tool in this paper is Scharlemann's combinatorial version of Gabai's sutured manifold theory.  The relationship of this paper to \cite{S5}, where Scharlemann states his conjecture about refilling meridians, is similar to the relationship between Gabai's and Scharlemann's proofs of the band sum theorem.  In \cite{S1}, Scharlemann proved that the band sum of two knots is unknotted only if it is the connect sum of two unknots.  Later Scharlemann and Gabai simultaneously and independently proved that 
\[\genus(K_1 \#_b K_2) \geq \genus(K_1) + \genus(K_2),\]
where $\#_b$ denotes a band sum.  Gabai \cite{G1} used sutured manifold theory to give a particularly simple proof. Scharlemann's proof \cite{S3} uses a completely combinatorial version of sutured manifold theory. Since rational tangle replacement is a special type of boring, a similar relationship also holds between this paper and some of Eudave-Mu\~noz's \cite{EM2} extensions of the original band sum theorem.  The techniques of this paper can be specialized to rational tangle replacement to recapture and extend some, but not all, of his results.  In \cite{S5}, Scharlemann suggests that sutured manifold theory might contribute to a solution to his conjecture.  This paper vindicates that idea.

The paper \cite{T} uses sutured manifold theory in a different way. The approaches of the that paper and this paper are often useful in different circumstances. For example, the approach used in this paper is more effective for studying the existence of certain reducing spheres in a manifold obtained by refilling meridians and for studying non-separating surfaces which are not homologous to a surface with interior disjoint from $W$. The approach of \cite{T} is more effective for the study of essential discs and separating surfaces.

\subsection{Notation}
We work in the PL or smooth categories.  All manifolds and surfaces will be compact and orientable, except where indicated.  $|A|$ denotes the number of components of $A$. If $A$ and $B$ are embedded curves on a surface, $|A \cap B|$ will generally be assumed to be minimal among all curves isotopic to $A$ and $B$.  For a subcomplex $B \subset A$, $\eta(B)$ denotes a closed regular neighborhood of $B$ in $A$.  $\inter{B}$ and $\interior B$ both denote the interior of $B$ and $\cls(B)$ denotes the closure of $B$. $\boundary B$ denotes the boundary of $B$. All homology groups have $\Z$ (integer) coefficients.

\subsection{Acknowledgements}
The work in this paper is part of my Ph.D. dissertation at the University of California, Santa Barbara.  I am grateful to my advisor, Martin Scharlemann, for suggesting I learn sutured manifold theory and for many helpful conversations. This paper benefited greatly from his comments on early drafts. I am also grateful to Ryan Blair and Robin Wilson for our conversations and to Stephanie Taylor for suggesting the ``boring'' terminology. Portions of this paper were written at Westmont College and the research was partially supported by a grant from the National Science Foundation.


\section{Sutured Manifold Theory}
We begin by reviewing a few relevant concepts from combinatorial sutured manifold theory \cite{S3}.

\subsection{Definitions}
A \defn{sutured manifold} is a triple $(N,\gamma,\psi)$ where $N$ is a compact, orientable 3--manifold, $\gamma$ is a collection of oriented simple closed curves on $\boundary N$, and $\psi$ is a properly embedded 1--complex. $T(\gamma)$ denotes a collection of torus components of $\boundary N$. The curves $\gamma$ divide $\boundary N - T(\gamma)$ into two surfaces which intersect along $\gamma$. Removing $\inter{\eta}(\gamma)$ from these surfaces creates the surfaces $R_+(\gamma)$ and $R_-(\gamma)$. Let $A(\gamma) = \eta(\gamma)$.

For an orientable, connected surface $S \subset N$ in general position with respect to $\psi$, we define
\[
\chi_\psi(S) = \max\{0,|S \cap \psi| - \chi(S)\}.
\]
If $S$ is disconnected, $\chi_\psi(S)$ is the sum of $\chi_\psi(S_i)$ for each component $S_i$. For a class $[S] \in H_2(N,X)$, $\chi_\psi([S])$ is defined to be the minimum of $\chi_\psi(S)$ over all embedded surfaces $S$ representing $[S]$. If $\psi = \nil$, then $\chi_\psi(\cdot)$ is the Thurston norm.

Of utmost importance is the notion of $\psi$--tautness for both surfaces in a sutured manifold $(N,\gamma,\psi)$ and for a sutured manifold itself. Let $S$ be a properly embedded surface in $N$.
\begin{itemize}
\item $S$ is $\psi$--minimizing in $H_2(N,\boundary S)$ if $\chi_\psi(S) = \chi_\psi[S,\boundary S]$.
\item $S$ is $\psi$--incompressible if $S - \psi$ is incompressible in $N - \psi$.
\item $S$ is $\psi$--taut if it is $\psi$--incompressible, $\psi$--minimizing in $H_2(N,\boundary S)$ and each edge of $\psi$ intersects $S$ with the same sign. If $\psi = \nil$ then we say either that $S$ is $\nil$--taut or that $S$ is taut in the Thurston norm.
\end{itemize}

A sutured manifold $(N,\gamma,\psi)$ is $\psi$--taut if
\begin{itemize}
\item $\boundary \psi$ (i.e. valence one vertices) is disjoint from $A(\gamma) \cup T(\gamma)$
\item $T(\gamma)$, $R_+(\gamma)$, and $R_-(\gamma)$ are all $\psi$--taut.
\item $N - \psi$ is irreducible.
\end{itemize}

The final notion that is important for this paper is the concept of a conditioned surface. A \defn{conditioned surface} $S \subset N$ is an oriented properly embedded surface such that:
  \begin{itemize}
  \item If $T$ is a component of $T(\gamma)$ then $\boundary S \cap T$ consists of coherently oriented parallel circles.
  \item If $A$ is a component of $A(\gamma)$ then $S \cap A$ consists of either circles parallel to $\gamma$ and oriented the same direction as $\gamma$ or arcs all oriented in the same direction.
  \item No collection of simple closed curves of $\boundary S \cap R(\gamma)$ is trivial in $H_1(R(\gamma),\boundary R(\gamma))$.
  \item Each edge of $\psi$ which intersects $S \cup R(\gamma)$ does so always with the same sign.
  \end{itemize}

Conditioned surfaces, along with product discs and annuli, are the surfaces along which a taut sutured manifold is decomposed to form a taut sutured manifold hierarchy. A hierarchy can be taken to be ``adapted'' to a parameterizing surface, that is, a surface $Q \subset N - \inter{\eta}(\psi)$ no component of which is a disc disjoint from $\gamma \cup \eta(\psi)$. The ``index'' of a parameterizing surface is a certain number associated to $Q$ which does not decrease as $Q$ is modified during the hierarchy.

\subsection{Satellite knots have Property P}
It will be helpful to review the essentials of the proof of \cite[Theorem 9.1]{S3}, where it is shown that satellite knots have property P.

In that theorem, a 3--manifold $N$ with $\boundary N$ a torus is considered.  It is assumed that $H_1(N)$ is torsion-free and that $k \subset N$ is a knot in $N$ such that $(N,\nil)$ is a $k$--taut sutured manifold.  Suppose that some non-trivial surgery on $k$ creates a manifold which has a boundary\hyph reducing disc $\ob{Q}$ and still has torsion-free first homology. The main goal is to show that $(N,\nil)$ is $\nil$--taut. The surface $Q = \ob{Q} - \inter{\eta}(k)$ acts as a parameterizing surface for a $k$--taut sutured manifold hierarchy of $N$.  At the end of the hierarchy, there is at least one component containing pieces of $k$.  A combinatorial argument using the assumption that $H_1(N)$ is torsion-free shows that, in fact, the last stage of the hierarchy is $\nil$--taut.  Sutured manifold theory then shows that the original manifold $N$ is $\nil$--taut, as desired.  This argument is extended in \cite{S4} to study surgeries on knots in 3--manifolds which produce reducible 3--manifolds.  In that paper, the surface $\ob{Q}$ can be either a $\boundary$-reducing disc or a reducing sphere.

This paper extends these techniques in two other directions.  First, we use an arc $\ob{\alpha} \subset M[\alpha]$ in place of the knot $k \subset N$.  Second, we develop criteria which allow the surface $\ob{Q} \subset M[\beta]$ to be any of a variety of surfaces including essential spheres and discs. Section \ref{Constructing Q} shows how to construct a useful surface $\ob{Q}$. Section \ref{Sutures} discusses the placement of sutures on $\boundary M[\alpha]$. This allows theorems about sutured manifolds to be phrased without reference to sutured manifold terminology. Section \ref{RM} applies the sutured manifold results in order to (partially) answer Scharlemann's conjecture about refilling meridians of genus two handlebodies. Section \ref{RTR} uses the technology to reprove three classical theorems about rational tangle replacement and prove a new theorem about essential meridional surfaces in the exterior of a knot or link obtained by boring a split link. Finally, Section \ref{Intersections} shows how the sutured manifold theory results of this paper can significantly simplify certain combinatorial arguments.

\section{Attaching a 2--handle}\label{2--handle}

Let $N$ be a compact orientable 3--manifold containing a component $F \subset \boundary N$ of genus at least two. Let $a \subset F$ be an essential closed curve and let $\mc{B} = \{b_1, \hdots, b_{|\mc{B}|}\}$ be a collection of disjoint, pairwise non-parallel essential closed curves in $F$ isotoped so as to intersect $a$ minimally. Suppose that $\gamma \subset \boundary N$ is a collection of simple closed curves, disjoint from $a$, such that $(N,\gamma \cup a)$ is a taut sutured manifold and $\gamma$ intersects the curves of $\mc{B}$ minimally. Let $\Delta_i = |b_i \cap a|$ and $\nu_i = |b_i \cap \gamma|$. 

Suppose that $Q \subset N$ is a surface with $q_i$ boundary components parallel to the curve $b_i$, for each $1 \leq i \leq |\mc{B}|$. Let $\boundary_0 Q$ be the components of $\boundary Q$ which are not parallel to any $b_i$. Assume that $\boundary Q$ intersects $\gamma \cup a$ minimally. Define $\Delta_\boundary = |\boundary_0 Q \cap a|$ and $\nu_\boundary = |\boundary_0 Q \cap \gamma|$. We need two definitions. The first defines a specific type of boundary compression and the second (as we shall see) is related to the notion of ``Scharlemann cycle''.

\begin{definition}
An \defn{$a$--boundary compressing disc} for $Q$ is a boundary compressing disc $D$ for which $\boundary D \cap F$ is a subarc of some essential circle in $\eta(a)$.
\end{definition}

\begin{definition}
An \defn{$a$--torsion $2g$--gon} is a disc $D \subset N$ with $\boundary D \subset F \cup Q$ consisting of $2g$ arcs labelled around $\boundary D$ as $\delta_1, \epsilon_1, \hdots, \delta_g, \epsilon_g$. The labels are chosen so that $\boundary D \cap Q = \cup \delta_i$ and $\boundary D \cap F = \cup \epsilon_i$. We require that each $\epsilon_i$ arc is a subarc of some essential simple closed curve in $\eta(a)$ and that the $\epsilon_i$ arcs are all mutually parallel as oriented arcs in $F - \boundary Q$. Furthermore we require that attaching to $Q$ a rectangle in $F - \boundary Q$ containing all the $\epsilon_i$ arcs produces an orientable surface.
\end{definition}

\begin{example}
Figure \ref{Fig: aTorsion2nGon} shows a hypothetical example. The surface outlined with dashed lines is $Q$. It has boundary components on $F$. There are two such boundary components pictured. The curve running through $Q$ and $F$ could be the boundary of an $a$--torsion $4$--gon. Notice that the arcs $\epsilon_1$ and $\epsilon_2$ are parallel and oriented in the same direction. Attaching the rectangle containing those arcs as two of its edges to $Q$ produces an orientable surface.
\end{example}

\begin{figure}[ht]
\labellist
\hair 2pt
\pinlabel $\delta_1$ [b] at 292 365
\pinlabel $Q$ at 540 321
\pinlabel $\delta_2$ [b] at 165.5 275
\pinlabel $\epsilon_2$ [r] at 251 216
\pinlabel $\epsilon_1$ [l] at 271 216
\pinlabel $F$ at 217 62
\endlabellist
\center
\includegraphics[scale=0.6]{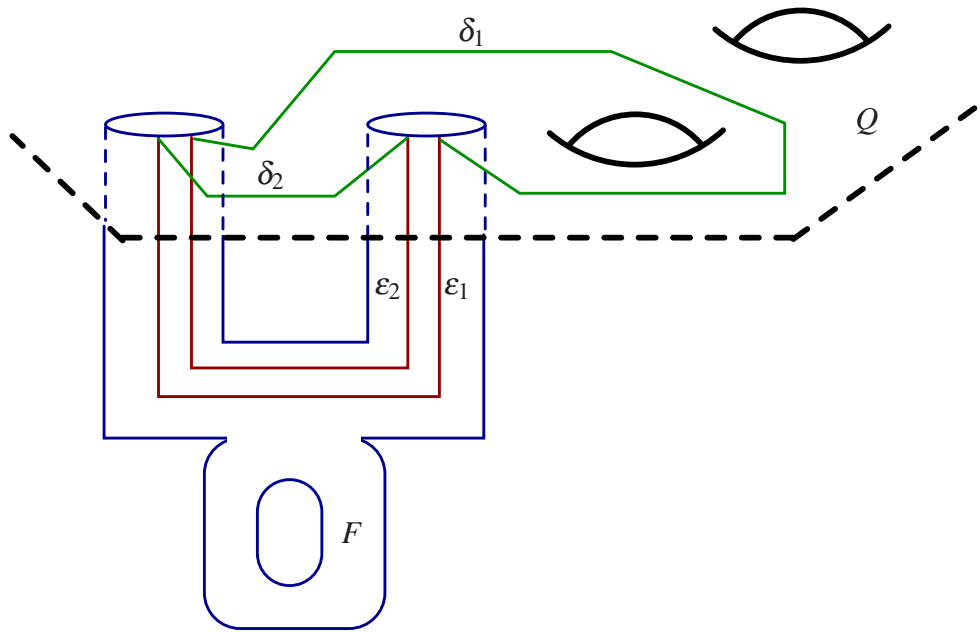}
\caption{The boundary of an $a$--torsion $4$--gon.}
\label{Fig: aTorsion2nGon}
\end{figure}

\begin{remark}
Notice that an $a$--torsion $2$--gon is an $a$--boundary compressing disc.
\end{remark}

If 2--handles are attached to each curve $b_i$ for $1 \leq i \leq |\mc{B}|$, a 3--manifold $N[\mc{B}]$ is obtained. Each component of $\boundary Q - \boundary_0 Q$ bounds a disc in $N[\mc{B}]$. Let $\ob{Q}$ be the result of attaching a disc to each component of $\boundary Q - \boundary_0 Q$. Then $\boundary \ob{Q} = \boundary_0 Q$. We will usually also attach 3--balls to spherical components of $\boundary N[\mc{B}]$.

\begin{remark}
The term ``$a$--torsion $2g$--gon'' is chosen because in certain cases (but not all) the presence of an $a$--torsion $2g$--gon with $g \geq 2$ guarantees that $N[\mc{B}]$ has torsion in its first homology.
\end{remark}

Define
\[
K(\ob{Q}) = \sum_{i=1}^{|\mc{B}|}q_i(\Delta_i - 2) + \Delta_\boundary - \nu_\boundary
\]

We will use the surface $Q$ to study the effects of attaching a 2--handle $\alpha \times I$ to a regular neighborhood of the curve $a \subset F$. Let $N[a]$ denote the resulting 3--manifold. Perform the attachment so that the 2--disc $\alpha$ has boundary $a$. Let $\ob{\alpha}$ denote the arc which is the cocore of the 2--handle $\alpha \times I$.

We can now state our main sutured manifold theory result. It is an adaptation of Theorem 9.1 of \cite{S3}.

\begin{MainThm}[cf. {\cite[Theorem 9.1]{S3} and \cite[Proposition 4.1]{S4}}] Suppose that $(N[a],\gamma)$ is $\ob{\alpha}$--taut, that $Q$ is incompressible, and that $Q$ contains no disc or sphere component disjoint from $\gamma \cup a$. Furthermore, suppose that one of the following holds:
\begin{itemize}
\item $N[a]$ is not $\nil$--taut
\item there is a conditioned  $\ob{\alpha}$--taut surface $S \subset N[a]$ which is not $\nil$--taut.
\item $N[a]$ is homeomorphic to a solid torus $S^1 \times D^2$ and $\ob{\alpha}$ cannot be isotoped so that its projection to the $S^1$ factor is monotonic.
\end{itemize}
Then at least one of the following holds:
\begin{itemize}
\item There is an $a$--torsion $2g$--gon for $Q$ for some $g \in \N$
\item $H_1(N[a])$ contains non-trivial torsion
\item $-2\chi(\ob{Q}) \geq K(\ob{Q})$.
\end{itemize}
\end{MainThm}

\begin{remark}
If $\ob{\alpha}$ can be isotoped to be monotonic in the solid torus $N[a]$ then it is, informally, a ``braided arc''.  The contrapositive of this aspect of the theorem is similar to the conclusion in \cite{G2} and \cite{S4} that if a non-trivial surgery on a knot with non-zero wrapping number in a solid torus produces a solid torus then the knot is a 0 or 1--bridge braid.
\end{remark}

The remainder of this section proves the theorem. Following \cite{S4}, define a \defn{Gabai disc} for $Q$ to be an embedded disc $D \subset N[a]$ such that
\begin{itemize}
\item $|\ob{\alpha} \cap \inter{D}| > 0$ and all points of intersection have the same sign of intersection
\item $|Q \cap \boundary D| < |\boundary Q \cap \eta(a)|$
\end{itemize}

The next proposition points out that the existence of a Gabai disc guarantees the existence of an $a$--boundary compressing disc or an $a$--torsion $2g$--gon.

\begin{proposition}
If there is a Gabai disc for $Q$ then there is an $a$--torsion $2g$--gon.
\end{proposition}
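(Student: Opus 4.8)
The plan is to put the Gabai disc $D$ into standard position with respect to the $2$--handle $\alpha \times I$, to cut it open along the cocore $\ob{\alpha}$, and then to run an outermost-arc argument on the resulting planar surface in $N$, driven by the defining inequality $|Q \cap \boundary D| < |\boundary Q \cap \eta(a)|$. First I would isotope $D$, keeping $\boundary D$ fixed, so that $D$ meets the $2$--handle $\alpha \times I$ in exactly $n := |\ob{\alpha} \cap \inter{D}| > 0$ meridian discs $m_1,\dots,m_n$, each meeting $\ob{\alpha}$ in a single point: by general position and innermost-disc isotopies of $D$ one removes closed components of $D \cap (\alpha \times I)$, pushes $\boundary$-parallel components and components disjoint from $\ob{\alpha}$ out of the handle, and standardizes what remains to meridian discs; since none of these moves touch $\boundary D$, the disc $D$ stays a Gabai disc. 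Let $D_N := D \cap N$ be the planar surface obtained from $D$ by deleting the interiors of $m_1,\dots,m_n$; its boundary consists of $\boundary D$ together with the circles $c_i := \boundary m_i$, each contained in $\eta(a) \subset F$ and parallel to $a$. After orienting $D$, the hypothesis that the $n$ points of $\ob{\alpha} \cap D$ all have the same sign says exactly that $c_1,\dots,c_n$ are mutually parallel and coherently oriented in $\eta(a)$; this is the essential use of the sign condition and is what will furnish the ``mutually parallel as oriented arcs'' and orientability clauses of the definition of an $a$--torsion $2g$--gon.

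Next, using that $Q$ is incompressible and has no disc component disjoint from $\gamma \cup a$, I would isotope $D_N$ so that $D_N \cap Q$ has no circle components and no arc cutting a disc off $Q$; then $D_N \cap Q$ is a union of arcs essential in $Q$, each with its two endpoints on $\boundary D_N \cap \boundary Q$. Since $c_i$ is parallel to $a$, it meets $\boundary Q$ in exactly $N_0 := |\boundary Q \cap \eta(a)|$ points, so the circles $c_1,\dots,c_n$ collectively carry $n N_0 \ge N_0$ of these endpoints, whereas $\boundary D$ carries only $|Q \cap \boundary D| < N_0$ of them. Moreover, the arcs into which $\boundary Q$ cuts each $c_i$ are precisely subarcs of the essential simple closed curve $c_i \subset \eta(a)$, and, by the preceding paragraph, subarcs of distinct $c_i$ in corresponding positions are parallel as oriented arcs in $F - \boundary Q$; these are exactly the arcs eligible to be $\epsilon$--arcs of an $a$--torsion $2g$--gon.

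Finally, I would locate a disc $E \subseteq D_N$ with $\inter{E} \cap Q = \nil$ whose boundary alternates between arcs of $D_N \cap Q$ and arcs of $\boundary D_N$, all of the latter lying on $\bigcup_i c_i$ and none on $\boundary D$. By the two previous paragraphs such an $E$ is an $a$--torsion $2g$--gon: its arcs $\delta_j \subset Q$ are essential in $Q$, its arcs $\epsilon_j \subset F$ are subarcs of the essential curves $c_i \subset \eta(a)$, they are mutually parallel as oriented arcs in $F - \boundary Q$, and the required surface is orientable because $E$ is; when $E$ meets $\boundary D_N$ in a single arc it is an $a$--boundary compressing disc, i.e. an $a$--torsion $2$--gon. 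To produce $E$ I would argue by an outermost-arc argument on $D_N$ (an arc of $D_N \cap Q$ outermost on $D_N$ cuts off a disc with interior disjoint from $Q$ and boundary one arc of $D_N \cap Q$ together with one arc of $\boundary D_N$), or equivalently by cutting $D_N$ along all of $D_N \cap Q$ and examining the resulting planar pieces: the endpoint count of the previous paragraph, together with the strict inequality $|Q \cap \boundary D| < N_0 \le n N_0$, forces one of those pieces to be a disc whose boundary misses $\boundary D$. This last step is the main obstacle: turning the numerical excess $n N_0 > |Q \cap \boundary D|$ into a disc whose \emph{entire} boundary avoids $\boundary D$, and then checking that the $\epsilon$--arcs it collects all lie in a single complementary rectangle of $\eta(a) - \boundary Q$ and are coherently oriented there, so that every clause of the definition of an $a$--torsion $2g$--gon is genuinely satisfied. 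The low-intersection edge cases (for instance $N_0 = 1$, or $n = 1$) and the passage from unoriented to oriented parallelism of the $\epsilon_j$ are where the bookkeeping is heaviest.
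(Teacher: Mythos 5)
Your set-up is sound and matches the paper's in spirit (working in the disc $D$ with the $n$ meridian subdiscs removed is the same as the paper's graph $\Lambda$ on $D$ with $n$ interior vertices), but the proof has a genuine gap exactly where you flag it, and the tools you propose do not close it. An outermost-arc argument does not run cleanly on $D_N$, which is an $(n+1)$--holed disc rather than a disc; and if you run it on $D$ itself, the disc you cut off has its ``$F$--side'' arc on $\boundary D$ rather than on one of the circles $c_i = \boundary m_i$, which is useless. More fundamentally, the strict count $|Q \cap \boundary D| < |\boundary Q \cap \eta(a)|$ does not by itself force a complementary region of $Q \cap D$ in $D$ to avoid $\boundary D$, and even if you found one, nothing in a crude Euler-characteristic or outermost-disc argument guarantees that the $\epsilon$--arcs it collects lie in a \emph{single} rectangle of $\eta(a) - \boundary Q$ and are coherently oriented there --- both of which are required by the definition of an $a$--torsion $2g$--gon.

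The missing idea is the CGLS Scharlemann-cycle machinery, which is what the paper uses. One labels each endpoint of each edge of $\Lambda$ lying on an interior vertex by the arc of $\boundary Q \cap \eta(a)$ it sits on; the Gabai condition $|Q \cap \boundary D| < |\boundary Q \cap \eta(a)|$ then produces a label $x$ carried by no boundary edge, so the subgraph on interior vertices satisfies condition $P(x)$. Lemma~\ref{P-condition} extracts an $x$--cycle and Lemma~\ref{Scharlemann exists} upgrades it (using incompressibility of $Q$ to kill circle intersections) to a Scharlemann cycle: a cycle whose interior face is disjoint from $\Lambda$ and whose corner labels are only $x$ and the one adjacent label $y$. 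That last feature is precisely what makes the construction work: restricting to the label pair $\{x,y\}$ forces all the $\epsilon$--arcs into the single rectangle $R$ between the arcs $x$ and $y$ of $\boundary Q \cap \eta(a)$, and the sign condition on $\ob{\alpha} \cap D$ plus the parity principle then give the coherent orientation and the orientability of $Q \cup R$. Your proposal has the right ingredients --- the circles $c_i$, the role of the all-same-sign hypothesis, the parallel-arc picture --- but without the $P(x)$/$x$--cycle/Scharlemann-cycle step you have no mechanism to find a face avoiding $\boundary D$ whose $F$--arcs are confined to one rectangle, and no such face need exist among ``outermost'' or ``innermost'' pieces.
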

\begin{proof}
Let $D$ be a Gabai disc for $Q$. The intersection of $Q$ with $D$ produces a graph $\Lambda$ on $D$.  The vertices of $\Lambda$ are $\boundary D$ and the points $\ob{\alpha} \cap D$. The latter are called the \defn{interior vertices} of $\Lambda$.  The edges of $\Lambda$ are the arcs $Q \cap D$. A \defn{loop} is an edge in $\Lambda$ with initial and terminal points at the same vertex. A loop is \defn{trivial} if it bounds a disc in $D$ with interior disjoint from $\Lambda$.

To show that there is an $a$--torsion $2g$--gon for $Q$, we will show that the graph $\Lambda$ contains a ``Scharlemann cycle'' of length $g$. The interior of the Scharlemann cycle will be the $a$--torsion $2g$--gon. In our situation, Scharlemann cycles will arise from a labelling of $\Lambda$ which is slightly non-standard.  Traditionally, when $\ob{\alpha}$ is a knot instead of an arc, the labels on the endpoints of edges in $\Lambda$, which are used to define ``Scharlemann cycles'', are exactly the components of $\boundary Q$.  In our case, since each component of $\boundary Q$ likely intersects $a$ more than once we need to use a slightly different labelling.  After defining the labelling and the revised notion of ``Scharlemann cycle'', it will be clear to those familiar with the traditional situation that the new Scharlemann cycles give rise to the same types of topological conclusions as in the traditional setting.  The discussion is modelled on Section 2.6 of \cite{CGLS}. 

A Scharlemann cycle of length 1 is defined to be a trivial loop at an interior vertex of $\Lambda$ bounding a disc with interior disjoint from $\Lambda$. We now work toward a definition of Scharlemann cycles of length $g > 1$.  Without loss of generality, we may assume that $|\ob{\alpha} \cap D|\geq 2$. Recall that the arc $\ob{\alpha}$ always intersects the disc $D$ with the same sign.  There is, in $F$, a regular neighborhood $A$ of $a$ such that $D \cap F \subset A$.  We may choose $A$ so that $\boundary A \subset D \cap F$. Let $\boundary_\pm A$ be the two boundary components of $A$. The boundary components of $Q$ all have orientations arising from the orientation of $\ob{Q}$ and $\ob{\beta}$. We may assume by an isotopy that all the arcs $\boundary Q \cap A$ are fibers in the product structure on $A$. Cyclically around $A$ label the arcs of $\boundary Q \cap A$ with labels $c_1, \hdots, c_\mu$.  Let $\mc{C}$ be the set of labels.  Being a submanifold of $\boundary Q$, each arc is oriented.  Say that two arcs are \defn{parallel} if they run through $A$ in the same direction (that is, both from $\boundary_- A$ to $\boundary_+ A$ or both from $\boundary_+ A$ to $\boundary_- A$).  Call two arcs \defn{antiparallel} if they run through $A$ in opposite directions.  Note that since the orientations of $\inter{D} \cap F$ in $A$ are all the same, an arc intersects each component of $\inter{D} \cap F$ with the same algebraic sign.  

Call an edge of $\Lambda$ with at least one endpoint on $\boundary D$ a \defn{boundary edge} and call all other edges \defn{interior edges}.  As each edge of $\Lambda$ is an arc and as all vertices of $\Lambda$ are parallel oriented curves on $\boundary W$, an edge of $\Lambda$ must have endpoints on arcs of $\mc{C} = \{c_1, \hdots, c_\mu\}$ which are antiparallel.  We call this the \defn{parity principle} (as in \cite{CGLS}).  Label each endpoint of an edge in $\Lambda$ with the arc in $\mc{C}$ on which the endpoint lies. 

We will occasionally orient an edge $e$ of $\Lambda$; in which case, let $\boundary_- e$ be the tail and $\boundary_+ e$ the head.  A \defn{cycle} in $\Lambda$ is a subgraph homeomorphic to a circle.  An $x$--cycle is a cycle which, when each edge $e$ in the cycle is given a consistent orientation, has $\boundary_- e$ labelled with $x \in \mc{C}$.  Let $\Lambda'$ be a subgraph of $\Lambda$ and let $x$ be a label in $\mc{C}$.  We say that $\Lambda'$ satisfies condition $P(x)$ if: 

$P(x)$: For each vertex $v$ of $\Lambda'$ there exists an edge of $\Lambda'$ incident to $v$ with label $x$ connecting $v$ to an interior vertex. 

\begin{lemma}[{\cite[Lemma 2.6.1]{CGLS}}] \label{P-condition}
Suppose that $\Lambda'$ satisfies $P(x)$.  Then each component of $\Lambda'$ contains an $x$--cycle.
\end{lemma}

\begin{proof}
The proof is the same as in \cite{CGLS}.
\end{proof}

A \defn{Scharlemann cycle} is an $x$--cycle $\sigma$ where the interior of the disc in $D$ bounded by $\sigma$ is disjoint from $\Lambda$. See Figure \ref{Fig: Schcycle}. Since each intersection point of $D \cap \ob{\alpha}$ has the same sign, the set of labels on a Scharlemann cycle contains $x$ and precisely one other label $y$, a component of $\mc{C}$ adjacent to $x$ in $A$. The arc $y$ and the arc $x$ are antiparallel by the parity principle. The \defn{length} of the Scharlemann cycle is the number of edges in the $x$--cycle.

\begin{figure}[ht]
\labellist
\hair 2pt
\pinlabel $x$ [bl] at 76 248
\pinlabel $\delta_1$ [t] at 150 248
\pinlabel $x$ [tl] at 248 215
\pinlabel $\delta_2$ [r] at 248 148
\pinlabel $x$ [tr] at 212 43
\pinlabel $\delta_3$ [b] at 150 43
\pinlabel $x$ [br] at 43 81
\pinlabel $\delta_4$ [l] at 43 148
\pinlabel $E$ at 150 148
\endlabellist
\center
\includegraphics[scale=0.6]{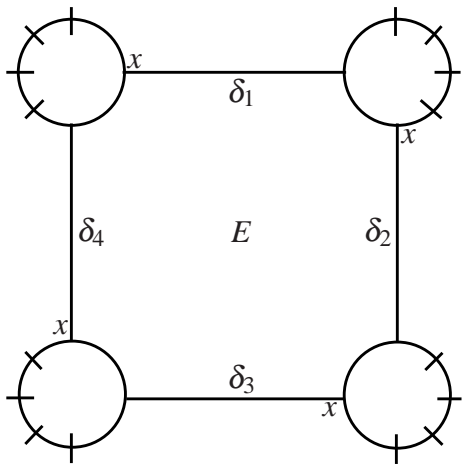}
\caption{A Scharlemann cycle of length 4 bounding an $a$--torsion $8$-gon.}
\label{Fig: Schcycle}
\end{figure}

\begin{lemma}[{\cite[Lemma 2.6.2]{CGLS}}] \label{Scharlemann exists}
If $\Lambda$ contains an $x$--cycle, then (possibly after a trivial 2--surgery on $D$), $\Lambda$ contains a Scharlemann cycle.  
\end{lemma}

\begin{proof}
The proof is again the same as in \cite{CGLS}. It uses the assumption that $Q$ is incompressible to eliminate circles of intersection on the interior of an innermost $x$--cycle.
\end{proof}

\begin{remark}
The presence of any such disc $D$ with $\Lambda$ containing a Scharlemann cycle is good enough for our purposes. So, henceforth, we assume that all circles in $\Lambda$ have been eliminated using the incompressibility of $Q$.
\end{remark}

\begin{remark}
In \cite{CGLS}, there is a distinction between $x$--cycles and, so-called, great $x$--cycles.  We do not need this here because all components of $D \cap F$ are parallel in $\eta(\boundary \alpha)$ as oriented curves.
\end{remark}

The next corollary explains the necessity of considering Scharlemann cycles.  

\begin{corollary}[\cite{CGLS}]\label{boundary edges}
If $\boundary D$ intersects fewer than $|\boundary Q \cap A|$ edges of $\Lambda$ then $\Lambda$ contains a Scharlemann cycle.
\end{corollary}

\begin{proof}
As $\boundary D$ contains fewer than $|\boundary Q \cap A|$ endpoints of boundary-edges in $\Lambda$ there is some $x \in \mc{C}$ which does not appear as a label on a boundary edge.  As every interior vertex of $\Lambda$ contains an edge with label $x$ at that vertex, none of those edges can be a boundary edge.  Consequently, $\Lambda$ satisfies $P(x)$.  Hence, by Lemmas \ref{P-condition} and \ref{Scharlemann exists}, $\Lambda$ contains a Scharlemann cycle of length $g$ (for some $g$).
\end{proof}

\begin{figure}[ht]
\labellist
\hair 2pt
\pinlabel $A$ [b] at 189.5 263
\pinlabel $R$ at 92 126
\pinlabel $x$ [l] at 305 145
\pinlabel $y$ [l] at 305 109
\endlabellist
\center
\includegraphics[scale=0.6]{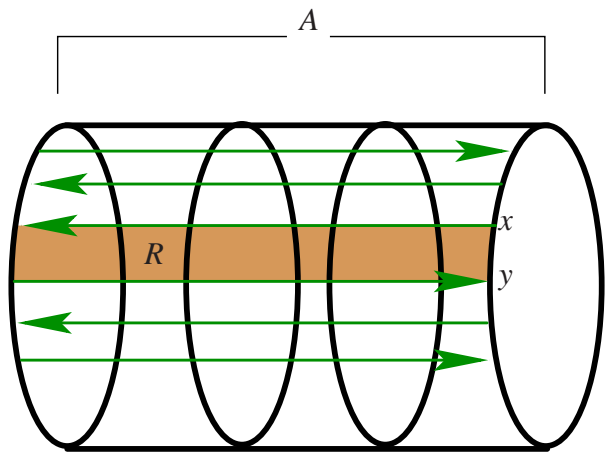}
\caption{The rectangle $R$.}
\label{Fig: TorsionRectangle}
\end{figure}

In $A$ there is a rectangle $R$ with boundary consisting of the arcs $x$ and $y$ and subarcs of $\boundary A$. See Figure \ref{Fig: TorsionRectangle}. Because $\ob{\alpha}$ always intersects $D$ with the same sign, $\boundary D$ always crosses $R$ in the same direction. This shows that the arcs $\epsilon_i$ are all mutually parallel in $F$. The arcs $x$ and $y$ are antiparallel, so attaching $R$ to $Q$ produces an orientable surface. Hence, the interior of the Scharlemann cycle is an $a$--torsion $2g$--gon.
\end{proof}

We now proceed with proving the contrapositive of the theorem. Suppose that none of the three possible conclusions of the theorem hold.

The surface $Q$ is a parameterizing surface for the $\ob{\alpha}$--taut sutured manifold $(N[a],\gamma)$. Let 
\[
(N[a],\gamma) = (N_0,\gamma_0) \stackrel{S_1}{\to} (N_1,\gamma_1) \stackrel{S_2}{\to} \hdots \stackrel{S_{n}}{\to} (N_n,\gamma_n)\]
be an $\ob{\alpha}$--taut sutured manifold hierarchy for $(N[a],\gamma)$ which is adapted to $Q$. The surface $S_1$ may be obtained from the surface $S$ by performing the double-curve sum of $S$ with $k$ copies of $R_+$ and $l$ copies of $R_-$ (Theorem 2.6 of \cite{S3}).

The index $I(Q_i)$ is defined to be
\[
I(Q_i) = |\boundary Q_i \cap \boundary \eta(\ob{\alpha}_i)| + |\boundary Q_i \cap \gamma_i| - 2\chi(Q_i)
\]
where $Q_i$ is the parameterizing surface in $N_i$ and $\ob{\alpha}_i$ is the remnant of $\ob{\alpha}$ in $N_i$. Since $-2\chi(\ob{Q}) < K(\ob{Q})$, simple arithmetic shows that $I(Q) < 2|\boundary Q \cap \eta(a)|$. Since there is no $a$--torsion $2g$--gon for $Q$, by the previous proposition, there is no Gabai disc for $Q$. The proof of \cite[Theorem 9.1]{S3} shows that $(N_n,\gamma_n)$ is also $\nil$--taut, after substituting the assumption that there are no Gabai discs for $Q$ in $N$ wherever \cite[Lemma 9.3]{S3} was used (as in \cite[Proposition 4.1]{S4}).  In claims 3, 4, and 11 of \cite[Theorem 9.1]{S3} use the inequality $I(Q) < 2|\boundary Q \cap A|$ to derive a contradiction rather than the inequalities stated in the proofs of those claims.  

Hence, the hierarchy is $\nil$--taut, $(N[a],\gamma)$ is a $\nil$--taut sutured manifold and $S_1$ is a $\nil$--taut surface. Suppose that $S$ is not $\nil$--taut. Then there is a surface $S'$ with the same boundary as $S$ but with smaller Thurston norm. Then the double-curve sum of $S'$ with $k$ copies of $R_+$ and $l$ copies of $R_-$ has smaller Thurston norm than $S_1$, showing that $S_1$ is not $\nil$--taut. Hence, $S$ is $\nil$--taut.

The proof of \cite[Theorem 9.1]{S3} concludes by noting that at the final stage of the hierarchy, there is a cancelling or (non-self) amalgamating disc for each remnant of $\ob{\alpha}$.  When $N[a]$ is a solid torus the only $\nil$--taut conditioned surfaces are unions of discs.  If $S$ is chosen to be a single disc then $S_1$ is isotopic to $S$. To see this, notice that $R_\pm$ is an annulus and so the double-curve sum of $S$ with $R_\pm$ is isotopic to $S$. Hence, the hierarchy has length one and the cancelling and (non-self) amalgamating discs show that $\ob{\alpha}$ is braided in $N[a]$.
\qed
\begin{remark}
The proof proves more than the theorem states. It is actually shown that at the end of the hierarchy, $\ob{\alpha} \cap N_n$ consists of unknotted arcs in 3--balls. This may be useful in future work.
\end{remark}

For this theorem to be useful, we need to discuss the placement of sutures $\gamma$ on $\boundary N$ and the construction of a surface $Q$ without $a$--torsion $2g$--gons. The next two sections address these issues. In each of them, we restrict $F$ to being a genus two surface.
\section{Placing Sutures}\label{Sutures}
Let $N$ be a compact, orientable, irreducible 3--manifold with $F \subset \boundary N$ a component containing an essential simple closed curve $a$.
Suppose that $\boundary N - F$ is incompressible in $N$. For effective application of the main theorem, we need to choose curves $\gamma$ on $\boundary N[a]$ so that $(N[a],\gamma)$ is $\ob{\alpha}$--taut. With our applications in mind, we restrict our attention to the situation when the boundary component $F$ containing $a$ has genus 2. Define $\boundary_1 N[a] = \boundary N - F$ and $\boundary_0 N[a] = \boundary N[a] - \boundary_1 N[a]$.

For the moment, we consider only the choice of sutures $\hat{\gamma}$ on $\boundary_0 N[a]$.  If $a$ is separating, so that $\boundary_0 N[a]$ consists of two tori joined by the arc $\ob{\alpha}$, we do not place any sutures on $\boundary_0 N[a]$, i.e. $\hat{\gamma} = \nil$. (Figure \ref{suturechoice}.A.) If $a$ is non-separating, choose $\hat{\gamma}$ to be a pair of disjoint parallel loops on $F - \eta(a)$ which separate the endpoints of $\ob{\alpha}$. (Figure \ref{suturechoice}.B.) 

\begin{figure}[ht] 
\labellist
\hair 2pt
\pinlabel \textbf{A.} at 10 254
\pinlabel \textbf{B.} at 10 92
\pinlabel $\hat{\gamma}$ [r] at 158 56
\pinlabel $\ob{\alpha}$ [bl] at 294 132
\pinlabel $\ob{\alpha}$ [b] at 280.5 284
\endlabellist
\center
\includegraphics[scale=0.6]{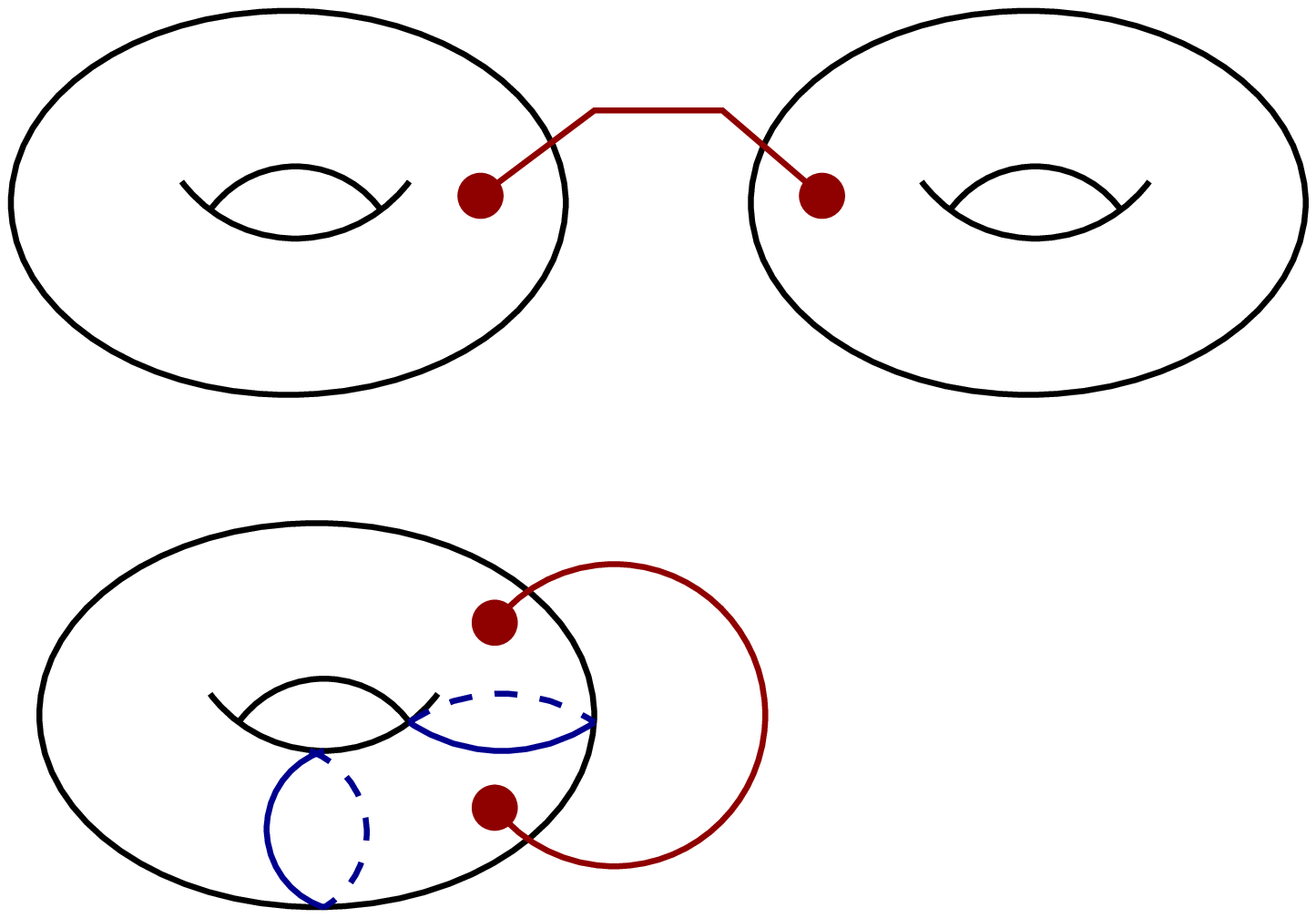}
\caption{Choosing $\hat{\gamma}$.}
\label{suturechoice}
\end{figure}

If we are in the special situation of ``refilling meridians'', we will want to choose the curves $\hat{\gamma}$ more carefully. Recall that in this case $N \subset M$ and $F$ bounds a genus 2 handlebody $W \subset (M - \inter{N})$. The curves $a$ and $b$ bound in $W$ discs $\alpha$ and $\beta$ respectively. 

Assuming that the discs $\beta$ and $\alpha$ have been isotoped to intersect minimally and non-trivially, the intersection $\alpha \cap \beta$ is a collection of arcs. An arc of $\alpha \cap \beta$ which is outermost on $\beta$ cobounds with a subarc $\psi$ of $b$ a disc with interior disjoint from $\alpha$. This disc is a meridional disc of a (solid torus) component of $\boundary W - \inter{\eta}(\alpha)$. The arc $\psi$ has both endpoints on the same component of $\boundary \eta(a) \subset F$. We, therefore, define a \defn{meridional arc} of $b - a$ to be any arc of $b - \inter{\eta}(a)$ which together with an arc in $\boundary \eta(\alpha) \cap \inter{W}$ bounds a meridional disc of $W - \inter{\eta}(\alpha)$. If $a$ is non-separating, then the existence of meridional arcs shows that every arc of $b - \inter{\eta}(a)$ with endpoints on the same component of $\boundary \eta(a) \subset F$ is a meridional arc of $b - a$. An easy counting argument shows that if $a$ is non-separating then there are equal numbers of meridional arcs of $b - a$ based at each component of $\boundary \eta(a) \subset F$. Hence, when $a$ is non-separating, the number of meridional arcs of $b - a$, denoted $\mc{M}_a(b)$ is even. Some meridional arcs are depicted in Figure \ref{Meridional Arcs}.

\begin{figure}[ht]
\center
\includegraphics[scale=0.4]{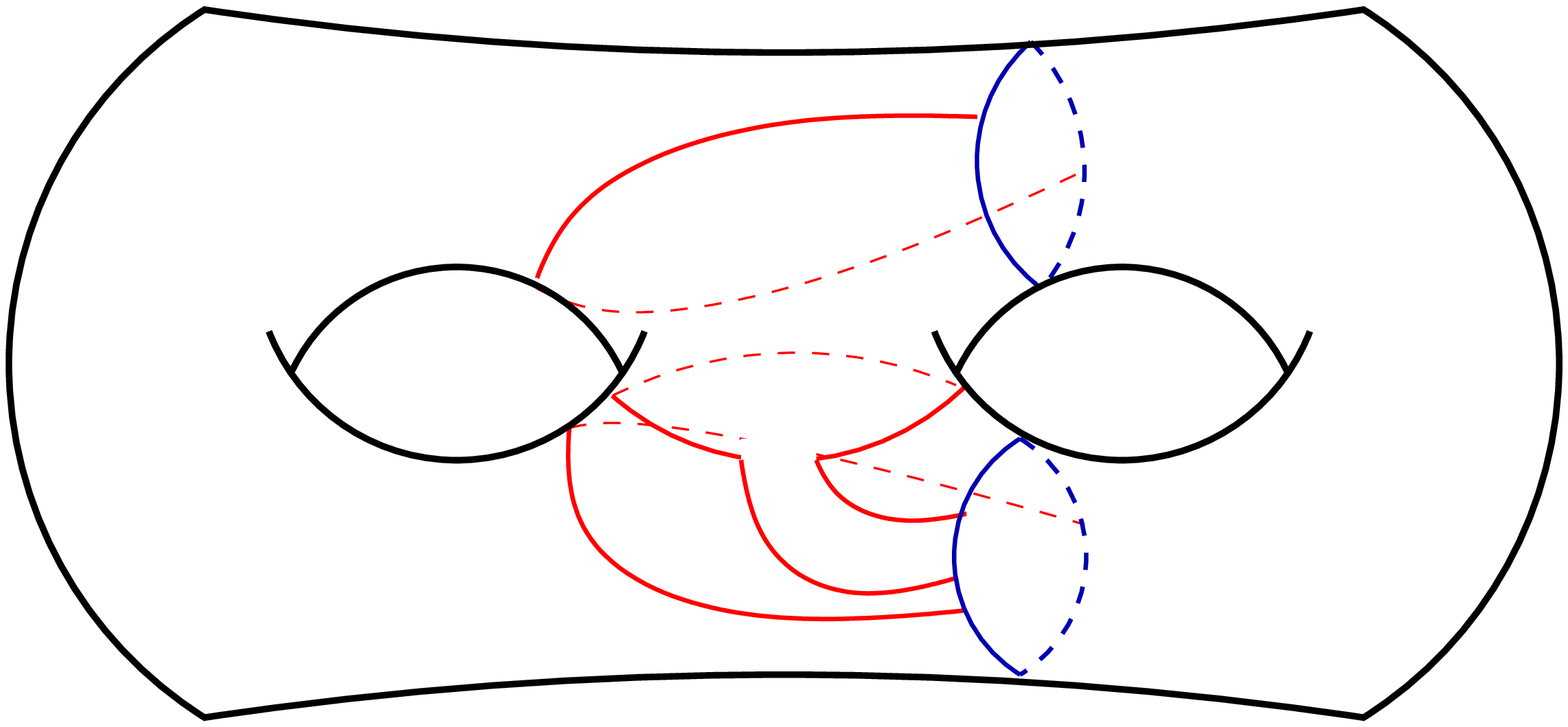}
\caption{Some meridional arcs on $\boundary W$}
\label{Meridional Arcs}
\end{figure}
Returning to the definition of the sutures $\hat{\gamma}$, we insist that when ``refilling meridians'' and when $\alpha$ is non-separating, the curves $\hat{\gamma}$ be meridional curves of the solid torus $W - \inter{\eta}(\alpha)$ which separate the endpoints of $\ob{\alpha}$ and which are disjoint from the meridional arcs of $b - a$ for a specified $b$.

We now show how to define sutures $\tild{\gamma}$ on non-torus components of $\boundary_1 N[a]$. Let $T(\gamma)$ be all the torus components of $\boundary_1 N[a]$. If $\boundary_1 N = T(\gamma)$ then $\tild{\gamma} = \nil$. Otherwise, the next lemma demonstrates how to choose $\tild{\gamma}$ so that, under certain hypotheses, $(N,\gamma \cup a)$ is taut, where $\gamma = \hat{\gamma} \cup \tild{\gamma}$.

\begin{lemma}\label{Lem: Choosing Sutures}
Suppose that $F - (\gamma \cup a)$ is incompressible in $N$. Suppose also that if $\boundary_1 N[a] \neq T(\gamma)$ then there is no essential annulus in $N$ with boundary on $\hat{\gamma} \cup a$. Then $\tild{\gamma}$ can be chosen so that $(N,\gamma \cup a)$ is $\nil$--taut and so that $(N[a],\gamma)$ is $\ob{\alpha}$--taut.  Furthermore, if $c \subset \boundary_1 N[a]$ is a collection of disjoint, non-parallel curves such that:
\begin{itemize}
\item $|c| \leq 2$
\item All components of $c$ are on the same component of $\boundary_1 N[a]$
\item No curve of $c$ cobounds an essential annulus in $N$ with a curve of $\hat{\gamma} \cup a$
\item If $|c| = 2$ then there is no essential annulus in $N$ with boundary $c$
\item If $|c| = 2$ and $a$ is separating, there is no essential thrice-punctured sphere in $N$ with boundary $c \cup a$.
\end{itemize}
then $\tild{\gamma}$ can be chosen to be disjoint from $c$.
\end{lemma}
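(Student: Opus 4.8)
The plan is to split the proof into two movements: first, choose $\tilde{\gamma}$ so that the sutured manifold $(N,\gamma\cup a)$ is $\nil$--taut; second, upgrade this to the assertion that $(N[a],\gamma)$ is $\ob{\alpha}$--taut. For the first movement the tool is the classical fact that an incompressible, $\boundary$--incompressible surface is Thurston--norm minimizing in its relative homology class, so it is enough to choose $\tilde{\gamma}$ making $R_+(\gamma\cup a)$ and $R_-(\gamma\cup a)$ incompressible and $\boundary$--incompressible in $N$; combined with the irreducibility of $N$ and the incompressibility of the torus components $T(\gamma)$ (guaranteed by the standing hypothesis that $\boundary N - F$ is incompressible), this gives $\nil$--tautness. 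For the second movement the tool is a direct analysis of the effect of attaching the $2$--handle along the suture $a$: a hypothetical norm--reducing surface for $R_\pm(\gamma)$ in $N[a]$, cut along the cocore $\ob{\alpha}$, returns a surface in $N$ with boundary on $\gamma\cup a$ together with copies of $a$, and this contradicts the first movement unless a forbidden essential annulus (or, when $a$ is separating, a forbidden essential thrice--punctured sphere) is produced.

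For the first movement, build $\tilde{\gamma}$ one component of $\boundary_1 N$ at a time. A torus component is placed in $T(\gamma)$ and given no suture (it is incompressible, hence $\nil$--taut, by hypothesis), and there are no sphere components since $N$ is irreducible. On a component $S$ of genus at least two, take $\tilde{\gamma}|_S$ to be a single separating simple closed curve splitting $S$ into two pieces of positive genus. For all but finitely many isotopy classes of such curves the surfaces $R_\pm(\gamma\cup a)$ are incompressible --- a compression would yield a compression of $F - (\gamma\cup a)$ or of $\boundary N - F$, or would exhibit a suture as inessential --- and $\boundary$--incompressible, since a $\boundary$--compressing disc pushes an essential arc across an annular neighborhood of a suture, producing either a compression of $R_\pm(\gamma\cup a)$ (already excluded) or an essential annulus in $N$ with boundary consisting of sutures. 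When $\tilde{\gamma}=\nil$ the surfaces $R_\pm(\gamma\cup a)$ lie in $F$ and such an annulus would compress $F$; when $\tilde{\gamma}\neq\nil$ the annulus has boundary on $\hat{\gamma}\cup a$ except possibly for a curve of $\tilde{\gamma}$, and these are excluded respectively by the no--essential--annulus hypothesis and by avoiding finitely many isotopy classes when choosing $\tilde{\gamma}$. Hence $(N,\gamma\cup a)$ is $\nil$--taut.

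For the second movement, note that $N[a]-\eta(\ob{\alpha})$ is homeomorphic to $N$, so $N[a]-\ob{\alpha}$ is irreducible and $\boundary\ob{\alpha}$, sitting at the centres of the two cocore discs of the $2$--handle, misses $A(\gamma)\cup T(\gamma)$; the tori of $T(\gamma)$ are untouched. The surface $R_\pm(\gamma)$ is $R_\pm(\gamma\cup a)$ with its $a$--parallel boundary circle capped by a cocore disc, and $\ob{\alpha}$ meets it once, so $\ob{\alpha}$--incompressibility of $R_\pm(\gamma)$ follows from incompressibility of $R_\pm(\gamma\cup a)$ because $R_\pm(\gamma)-\ob{\alpha}$ deformation retracts onto $R_\pm(\gamma\cup a)$ inside $N[a]-\ob{\alpha}=N$. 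For $\ob{\alpha}$--minimality, a competitor surface $S'$ representing $[R_\pm(\gamma)]$ in $N[a]$ with smaller $\chi_{\ob{\alpha}}$ must meet $\ob{\alpha}$, and cutting $S'$ along $\ob{\alpha}$ gives a surface in $N$ with boundary on $\gamma\cup a$ plus $a$--parallel circles whose complexity undercuts the $\nil$--tautness of $(N,\gamma\cup a)$ unless a component is an essential annulus with boundary on $\hat{\gamma}\cup a$ or (when $a$ is separating) an essential thrice--punctured sphere with $a$ on its boundary --- both forbidden. Thus $(N[a],\gamma)$ is $\ob{\alpha}$--taut. For the ``furthermore'', the five listed conditions on $c$ are exactly the statements needed so that the finitely many essential annuli and thrice--punctured spheres that control the two movements cannot have boundary on $c$, so since on the relevant component of $\boundary_1 N$ only finitely many isotopy classes of separating curves are excluded (finitely many from these essential surfaces, and at most two parallelism classes from $c$ itself), a choice of $\tilde{\gamma}$ disjoint from $c$ retaining all the required properties still exists.

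The main obstacle is the $\ob{\alpha}$--minimality in the second movement. One must track precisely how capping with the $2$--handle disc changes the Thurston norm and then classify the low--complexity surfaces in $N$, with boundary on $\gamma\cup a$ and on $a$--parallel circles, that the cut--along--cocore operation can produce; this is exactly where the no--essential--annulus hypothesis --- and, in the separating case, the no--essential--thrice--punctured--sphere hypothesis (with the analogous conditions on $c$ for the ``furthermore'') --- becomes indispensable. The bookkeeping differs between the separating and non--separating cases because of the two different prescriptions for $\hat{\gamma}$; in particular, in the separating case the surfaces $R_\pm(\gamma)$ are tori and one must confirm they are non--separating in $N[a]$. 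The first movement and the genericity count are, by contrast, routine once one invokes the norm--minimizing property of incompressible $\boundary$--incompressible surfaces.
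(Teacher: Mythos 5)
The proposal takes a different route from the paper but contains a genuine gap in the first movement. You open by invoking ``the classical fact that an incompressible, $\boundary$--incompressible surface is Thurston--norm minimizing in its relative homology class.'' This is not a fact: incompressibility and $\boundary$--incompressibility do not imply norm minimality. (Non-fibered knots, for instance, can carry incompressible Seifert surfaces of non-minimal genus.) The whole first movement leans on this, and collapses without it. Your choice of $\tild{\gamma}\vert_S$ as a \emph{single} separating curve on each genus $\geq 2$ component $S$ leaves $R_\pm(\gamma')\cap S$ as a surface of positive genus with a single boundary circle, and nothing in the stated hypotheses prevents a lower-genus surface in $N$ (not necessarily parallel to $R_\pm$) from representing the same class in $H_2(N,\eta(\boundary R_\pm))$. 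The related claim that ``for all but finitely many isotopy classes'' of separating curves the $R_\pm$ are incompressible and $\boundary$--incompressible is unsubstantiated; there is no finiteness mechanism given, and in fact one expects infinitely many separating curves on $\boundary_1 N$ to bound compressing discs or cobound bad annuli in unfavourable examples.

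The paper's proof works quite differently and avoids this trap. Following Scharlemann \cite{S4} and Lackenby \cite{L2}, it chooses $\tild{\gamma}$ to cut the non-torus components of $\boundary N$ into tori, once-punctured tori, and thrice-punctured spheres, so that $\chi_\nil(R_\pm) = -\chi(\boundary N)/2$ and $|\gamma'| = -3\chi(\boundary N)/2 - \tau$, where $\tau$ counts once-punctured-torus complementary pieces. Norm minimality of $R_\pm$ is then proved by a \emph{counting argument}: any competitor $J$ with $\boundary J = \boundary R_\pm$ has no disc components (incompressibility of $\boundary N - F$) and no annulus components (the annulus hypotheses, including the constraints on $c$), hence $\chi_\nil(J) \geq |\boundary J|/3$, and one checks via the bound $\tau \leq 4$ (and the ``pantsless'' dichotomy when $\tau > 2$) that this forces $\chi_\nil(J) \geq \chi_\nil(R_\pm)$. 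The thrice-punctured-sphere hypothesis on $c$ in the separating case enters precisely to guarantee the pantsless property when $\tau$ is large. None of this survives if $R_\pm$ is allowed a high-genus component with a single boundary circle: the inequality $\chi_\nil(J)\geq|\boundary J|/3$ no longer yields enough, since a single boundary curve gives only $\chi_\nil(J)\geq 1$ regardless of the genus of $R_\pm$.

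Your second movement --- passing from $\nil$--tautness of $(N,\gamma\cup a)$ to $\ob{\alpha}$--tautness of $(N[a],\gamma)$ --- is essentially standard and is where you place your ``main obstacle,'' but this is not where the work lies; the paper records it as an immediate consequence. If you wish to salvage the proposal, replace the single-separating-curve choice with Scharlemann's special curve collections (or Lackenby's variant) cutting into pants and once-punctured tori, and replace the ``incompressible implies norm-minimizing'' claim with the explicit Euler characteristic count above, keeping careful track of the quantity $\tau$ and of when the pantsless hypothesis is available.
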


The main ideas of the proof are contained in Section 5 of \cite{S4} and Theorem 2.1 of \cite{L2}. In \cite{S4}, Scharlemann considers ``special'' collections of curves on a non-torus component of $\boundary N$. These curves cut the component into thrice-punctured spheres. Exactly two of the curves in the collection bound once-punctured tori. In those tori are two curves of the collection which are called ``redundant''. The redundant curves are removed and the remaining curves form the desired sutures. Scharlemann shows how to construct such a special collection which is disjoint from a set of given curves and which gives rise to a taut-sutured manifold structure on the manifold under consideration. Lackenby, in \cite{L2}, uses essentially the same construction (but with fewer initial hypotheses) to construct a collection of curves cutting the non-torus components of $\boundary N$ into thrice-punctured spheres, but where all the curves are non-separating. We need to allow the sutures to contain separating curves as $c$ may contain separating curves. By slightly adapting Scharlemann's work, in the spirit of Lackenby, we can make do with the hypotheses of the lemma, which are slightly weaker than what a direct application of Scharlemann's work would allow.

\begin{proof}
Let $\tau$ be the number of once-punctured tori in $\boundary N$ with boundary some component of $c \cup a$. Since all components of $c$ are on the same component of $\boundary N$, $\tau \leq 4$ with $\tau \geq 3$ only if $a$ is separating.

Say that a collection of curves on $\boundary N$ is \defn{pantsless} if, whenever a thrice-punctured sphere has its boundary a subset of the collection, all components of the boundary are on the same component of $\boundary N$. If $a$ is non-separating, then $\tau \leq 2$. Hence, either $\tau \leq 2$ or $c \cup a \cup \hat{\gamma}$ is pantsless.

Scharlemann shows how to extend the set $c$ to a collection $\Gamma$, such that there is no essential annulus in $N$ with boundary on $
\Gamma \cup a \cup \hat{\gamma}$ and the curves $\Gamma$ cut $\boundary N$ into tori, once-punctured tori, and thrice-punctured spheres. Furthermore, if $c \cup a \cup \hat{\gamma}$ is pantsless, then so is $\Gamma \cup a \cup \hat{\gamma}$. An examination of Scharlemann's construction shows that all curves of $\Gamma - c$ may be taken to be non-separating. Thus, the number of once-punctured tori in $\boundary N$ with boundary on some component of $\Gamma \cup a$ is still $\tau$. If $\Gamma$ cannot be taken to be a collection of sutures on $\boundary N$, then, by construction, $|c| = 2$, one curve of $c$ bounds a once-punctured torus in $\boundary N$ containing the other curve of $c$. The component of $c$ in the once-punctured torus is ``redundant'' (in Scharlemann's terminology). If no curve of $c$ is redundant, let $\tild{\gamma} = \Gamma$; otherwise, form $\tild{\gamma}$ by removing the redundant curve from $\Gamma$. Let $\gamma' = \tild{\gamma} \cup a \cup \hat{\gamma}$. We now have a sutured manifold $(N,\gamma')$. Notice that the number of once-punctured torus components of $\boundary N - \gamma'$ is equal to $\tau$.

We now desire to show that $(N,\gamma')$ is $\nil$--taut. If it is not taut, then $R_\pm(\gamma)$ is not norm-minimizing in $H_2(N,\eta(\boundary R_\pm))$. Let $J$ be an essential surface in $N$ with $\boundary J = \boundary R_\pm = \gamma'$. Notice that $\chi_\nil(R_\pm) = -\chi(\boundary N)/2$ and that $|\gamma'| = -3\chi(\boundary N)/2 - \tau$.

Recall that either $\tau \leq 2$ or $\gamma'$ is pantsless. Suppose, first, that $\tau \leq 2$. Since no component of $J$ can be an essential annulus, by the arguments of Scharlemann and Lackenby, $\chi_\nil(J) \geq |\boundary J|/3 = |\gamma'|/3$. Hence, $\chi_\nil(J) \geq -\chi(\boundary N)/2 - \tau/3$. Since $\tau \leq 2$ and since $\chi_\nil(J)$ and $-\chi(\boundary N)/2$ are both integers, $\chi_\nil(J) \geq |\boundary N|/2 = \chi_\nil(R_\pm)$. Thus, when $\tau \leq 2$, $(N,\gamma')$ is a $\nil$--taut sutured manifold.

Suppose, therefore that $\gamma'$ is pantsless. Recall that $\tau \leq 4$. We first examine the case when each component of $J$ has its boundary contained on a single component of $\boundary M$. Let $J_0$ be all the components of $J$ with boundary on a single component $T$ of $\boundary N$. Let $\tau_0$ be the number of once-punctured torus components of $T - \gamma'$. Notice that $\tau_0 \leq 2$. The proof for the case when $\tau \leq 2$, shows that $\chi_\nil(J_0) \geq \chi_\nil(R_\pm \cap T)$. Summing over all component of $\boundary N$ shows that $\chi_\nil(J) \geq \chi_\nil(R_\pm)$, as desired.

We may, therefore, assume that some component $J_0$ of $J$ has boundary on at least two components of $\boundary N$. Since $\gamma'$ is pantsless, $\chi_\nil(J_0) \geq (|\boundary J_0| + 2)/3$. For the other components of $J$ we have, $\chi_\nil(J - J_0) \geq |\boundary(J-J_0)|/3$. Thus,
\[
\chi_\nil(J) \geq \frac{|\gamma'| + 2}{3} \geq -\frac{\chi(\boundary N)}{2} + \frac{2 - \tau}{3}
\]
Since $\tau \leq 4$ and since $\chi_\nil(J)$ and $-\chi(\boundary N)/2$ are both integers, we must have $\chi_\nil(J) \geq -\chi(\boundary N)/2 = \chi_\nil(R_\pm)$, as desired. Hence, $(N,\gamma') = (N,\gamma \cup a)$ is $\nil$--taut.
\end{proof}

\begin{remark}
The assumption that all components of $c$ are contained on the same component of $\boundary_1 N[a]$ can be weakened to a hypothesis on the number $\tau$. For what follows, however, our assumption suffices.
\end{remark}

We will be interested in when a component of $\boundary N - F$ becomes compressible upon attaching a 2--handle to $a \subset F$ and also becomes compressible upon attaching a 2--handle to $b \subset F$. If such occurs, the curves $c$ of the previous lemma will be the boundaries of the compressing discs for that component of $\boundary N$. Obviously, in order to apply the lemma we will need to make assumptions on how that component compresses.

\section{Constructing $Q$}\label{Constructing Q}
The typical way in which we will apply the main theorem is as follows. Suppose that $a$ and $b$ are simple closed curves on a genus two component $F \subset \boundary N$ and that there is an ``interesting'' surface $\ob{R} \subset N[b]$. We will want to use this surface to show that either $-2\chi(\ob{R}) \geq K(\ob{R})$ or $N[a]$ is taut. \textit{A priori}, though, the surface $R = \ob{R} \cap N$ may have $a$--boundary compressing discs or $a$--torsion $2g$--gons. The purpose of this section is to show how, given the surface $\ob{R}$ we can construct another surface $\ob{Q}$ which will, hopefully, have similar properties to $\ob{R}$ but be such that $Q = \ob{Q} \cap N$ does not have $a$--boundary compressing discs or $a$--torsion $2g$--gons. This goal will not be entirely achievable, but Theorem \ref{Thm: Constructing Q} shows how close we can come. Throughout we assume that $N$ is a compact, orientable, irreducible 3--manifold with $F \subset \boundary N$ a component having genus equal to 2. Let $a$ and $b$ be two essential simple closed curves on $F$ so that $a$ and $b$ intersect minimally and non-trivially. As before, let $\boundary_1 N = \boundary_1 N[b] = \boundary N - F$ and let $\boundary_0 N[b] = \boundary N[b] - \boundary_1 N[b]$. Let $T_0$ and $T_1$ be the components of $\boundary_0 N[b]$. If $b$ is non-separating, then $T_0 = T_1$.

Before stating the theorem, we make some important observations about $N[b]$ and surfaces in $N[b]$. If $b$ is non-separating, there are multiple ways to obtain a manifold homeomorphic to $N[b]$. Certainly, attaching a 2--handle to $b$ is one such way. If $b^*$ is any curve in $F$ which cobounds in $F$ with $\boundary \eta(b)$ a thrice-punctured sphere, then attaching 2--handles to both $b^*$ and $b$ creates a manifold with a spherical boundary component. Filling in that sphere with a 3--ball creates a manifold homeomorphic to $N[b]$. We will often think of $N[b]$ as obtained in the fashion. Say that a surface $\ob{Q} \subset N[b]$ is \defn{suitably embedded} if each component of $\boundary Q - \boundary \ob{Q}$ is a curve parallel to $b$ or to some $b^*$. We denote the number of components of $\boundary Q - \boundary \ob{Q}$ parallel to $b$ by $q = q(\ob{Q})$ and the number parallel to $b^*$ by $q^* = q^*(\ob{Q})$. Let $\tild{q} = q + q^*$. If $b$ is separating, define $b^* = \nil$. Define $\Delta = |b \cap a|$, $\Delta^* = |b^* \cap a|$, $\nu = |b \cap \gamma|$, and $\nu^* = |b^* \cap \gamma|$. We then have
\[
K(\ob{Q}) = (\Delta - \nu - 2)q + (\Delta^* - \nu^* - 2)q^* + \Delta_\boundary - \nu_\boundary.
\]

Define a \defn{slope} on a component of $\boundary N[b]$ to be an isotopy class of pairwise disjoint, pairwise non-parallel curves on that component. The set of curves is allowed to be the empty set. Place a partial order on the set of slopes on a component of $\boundary N[b]$ by declaring $r \leq s$ if there is some set of curves representing $r$  which is contained in a set of curves representing $s$. Notice that $\nil \leq r$ for every slope $r$. Say that a surface $\ob{R} \subset N[b]$ has boundary slope $\nil$ on a component of $\boundary N$ if $\boundary \ob{Q}$ is disjoint from that component. Say that a surface $\ob{R} \subset N[b]$ has boundary slope $r \neq \nil$ on a component of $\boundary N$ if each curve of $\boundary \ob{R}$ on that component is contained in some representative of $r$ and every curve of a representative of $r$ is isotopic to some component of $\boundary \ob{R}$.

Define a surface to be \defn{essential} if it is incompressible, boundary\hyph incompressible and has no component which is boundary-parallel or which is a 2--sphere bounding a 3--ball. The next theorem takes as input an essential surface $\ob{R} \subset N[b]$ and gives as output a surface $\ob{Q}$ such that $Q = \ob{Q} \cap N$ can (in many circumstances) be effectively used as a parameterizing surface. The remainder of the section will be spent proving it.

\begin{theorem}\label{Thm: Constructing Q}
Suppose that $\ob{R} \subset N[b]$ is a suitably embedded essential surface and suppose either
\begin{itemize}
\item[(I)] $\ob{R}$ is a collection of essential spheres and discs, or
\item[(II)] $N[b]$ contains no essential sphere or disc.
\end{itemize}
Then there is a suitably embedded incompressible and boundary\hyph incompressible surface $\ob{Q} \subset N[b]$ with the following properties. (The properties have been organized for convenience. The properties marked with a ``*'' are optional and need not be invoked.)
\begin{itemize}
\item[]
\item $\ob{Q}$ is no more complicated than $\ob{R}$:
	\begin{itemize}
	\item[(C1)] $(-\chi(\ob{Q}),\tild{q}(\ob{Q})) \leq (-\chi(\ob{R}),\tild{q}(\ob{R}))$ in lexicographic order
	\item[(C2)] The sum of the genera components of $\ob{Q}$ is no bigger than the sum of the genera of components of $\ob{R}$
	\item[(C3)] $\ob{Q}$ and $\ob{R}$ represent the same class in $H_2(N[b],\boundary N[b])$
	\end{itemize}
\item[]
\item The options for compressions, $a$--boundary compressions, and $a$--torsion $2g$--gons are limited:
	\begin{itemize}
\item[(D0)] $Q$ is incompressible.
\item[(D1)] Either there is no $a$--boundary compressing disc for $Q$ or $\tild{q} = 0$.
\item[(*D2)] If no component of $\ob{R}$ is separating and if $\tild{q} \neq 0$ then there is no $a$--torsion $2g$--gon for $Q$.
\item[(D3)] If $\ob{Q}$ is a disc or $2$--sphere then either $N[b]$ has a lens space connected summand or there is no $a$--torsion $2g$--gon for $Q$ with $g \geq 2$.
\item[(D4)] If $\ob{Q}$ is a planar surface then either there is no $a$--torsion $2g$--gon for $Q$ with $g \geq 2$ or attaching 2--handles to $\boundary N[b]$ along $\boundary \ob{Q}$ produces a 3--manifold with a lens space connected summand.
\end{itemize}
\item[]
\item The boundaries are not unrelated:
	\begin{itemize}
	\item[(*B1)] Suppose that (II) holds, that we are refilling meridians, that no component of $\ob{R}$ separates, and that $\boundary \ob{R}$ has exactly one non-meridional component on each component of $\boundary_0 N[b]$. Then $\ob{Q}$ has exactly one boundary component on each component of $\boundary_0 N[b]$ and the slopes are the same as those of $\boundary \ob{R} \cap \boundary_0 N[b]$.
	\item[(B2)] If $\boundary \ob{R} \cap \boundary_1 N$ is contained on torus components of $\boundary_1 N$ or if neither (D2) or (B1) are invoked, then the boundary slope of $\ob{Q}$ on a component of $\boundary_1 N[b]$ is less than or equal to the boundary slope of $\ob{R}$ on that component.
	\item[(B3)] If (D2) is not invoked and if the boundary slope of $\ob{R}$ on a component of $\boundary_0 N[b]$ is non-empty then the boundary slope of $\ob{Q}$ on that component is less than or equal to the boundary slope of $\ob{R}$.
	\end{itemize}
\end{itemize}
\end{theorem}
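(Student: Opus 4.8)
The proof of Theorem \ref{Thm: Constructing Q} will proceed by starting with $\ob{R}$ and repeatedly modifying it to remove the bad features (compressions, $a$--boundary compressing discs, low-complexity $a$--torsion $2g$--gons) while controlling the complexity invariant $(-\chi,\tilde q)$ in lexicographic order. The plan is to work in two reduction phases, corresponding to the two clauses of the hypothesis. First I would establish property (D0): in case (II) this is immediate since $N[b]$ is irreducible and $\boundary$-irreducible so any essential $\ob{R}$ is already incompressible after isotopy, and in case (I), where $\ob{R}$ is a union of spheres and discs, I would compress $\ob{R}$ in $N[b]$ until $Q = \ob{R}\cap N$ is incompressible, noting that compressing can only decrease $-\chi(\ob{Q})$ and can be arranged not to increase $\tilde q$, which gives (C1); (C2) holds because spheres and discs have genus $0$; (C3) holds because compression preserves the homology class.

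The heart of the argument is the passage from an $a$--boundary compression or an $a$--torsion $2g$--gon to an honest reduction of complexity. The key mechanism is the standard one: an $a$--boundary compressing disc $D$ for $Q$ can be used to $\boundary$-compress $Q$ along $D$, and because $\boundary D\cap F$ is a subarc of an essential circle in $\eta(a)$, the resulting curve on $\boundary N[b]$ is again suitably embedded (the two copies of $b$ or $b^*$ produced are parallel to the originals); doing this strictly decreases $|\boundary Q\cap\eta(a)|$, and with a little care either strictly decreases $-\chi(\ob{Q})$ or decreases $\tilde q$, establishing (D1). For the $a$--torsion $2g$--gon clauses (D3), (D4), and the optional (D2), I would feed the $2g$--gon into the attaching-2--handles picture: a Scharlemann cycle of length $g$ caps off to give a disc or planar surface meeting the 2--handle cocore, and the classical argument (as in \cite{CGLS}) shows that when $\ob{Q}$ is a disc or sphere and $g\geq 2$ one produces a lens space (or reducible, hence lens space) summand of $N[b]$ unless the $2g$--gon can be removed by a further isotopy/compression — this is exactly the dichotomy recorded in (D3), and (D4) is the relative version after attaching 2--handles along $\boundary\ob{Q}$. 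For (D2), the hypothesis that no component of $\ob{R}$ separates is what lets one orient things coherently so that an $a$--torsion $2g$--gon would force torsion or an actual reduction, contradicting minimality of the complexity.

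For the boundary-control properties (B1), (B2), (B3), the point is to track what each modification does to $\boundary\ob{Q}$ on $\boundary_0 N[b]$ and $\boundary_1 N[b]$. A $\boundary$-compression of $\ob{Q}$ along a disc with boundary arc on a component of $\boundary N[b]$ either leaves the slope on that component unchanged or replaces it by a sub-slope (fewer parallel curves, or curves forming a representative contained in the old representative), which is precisely $\leq$ in the partial order on slopes; this gives (B2) and (B3) as long as the relevant modifications are all of this local $\boundary$-compression type. Property (B1) is more delicate: under hypothesis (II) with no separating component and exactly one non-meridional boundary curve on each torus of $\boundary_0 N[b]$, a homological argument (the non-meridional curve is forced by $[\ob{R}]\in H_2(N[b],\boundary N[b])$, which is preserved by (C3)) shows the single non-meridional boundary component and its slope must persist through all the modifications, since removing it would change the homology class.

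**Main obstacle.**
The hardest part will be organizing the reductions so that \emph{all} the desired properties hold \emph{simultaneously} for a single output surface $\ob{Q}$: each individual modification (compress, $\boundary$-compress along an $a$-disc, remove a $2g$--gon) is routine, but a modification performed to kill one bad feature may reintroduce another, and one must exhibit a well-founded complexity measure — presumably the lexicographic pair $(-\chi(\ob{Q}),\tilde q(\ob{Q}))$, perhaps refined by $|\boundary Q\cap\eta(a)|$ and the boundary slopes — that strictly decreases under every modification, so the process terminates at a surface with no bad features at all. Getting the bookkeeping right for which properties are ``optional'' (the starred ones, invoked only in the non-separating refilling-meridians setting) and ensuring the interaction clauses in (B2) — ``if neither (D2) nor (B1) are invoked'' — are consistent is where the real care is needed, and I expect this to occupy the bulk of the section.
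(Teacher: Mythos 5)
Your plan of attack captures the correct skeleton — repeated modification with a well-founded complexity, with the arc-of-$\epsilon$ analysis powering the reductions — but it underestimates the technical content and contains two substantive errors.

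First, your claim that (D0) is ``immediate'' under hypothesis (II) confuses two different ambient manifolds. Property (D0) asserts that $Q=\ob{Q}\cap N$ is incompressible \emph{in $N$}, not that $\ob{Q}$ is incompressible in $N[b]$. Even when $\ob{Q}$ is incompressible in $N[b]$, a compressing disc $D$ for $Q$ in $N$ can exist whose boundary is inessential in $\ob{Q}$; compressing along $D$ then splits off a sphere component (which in case (II) must be inessential, hence discardable). This step is elementary, but it is genuinely a step, not an immediacy, and it is exactly what the paper does before the $a$-boundary-compression analysis.

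Second, your proposed argument for (B1) via homology alone does not work. A boundary compression of the type arising in the case analysis (the arc $\epsilon$ on a torus $T_0$ with both endpoints on a copy of $\boundary\beta$, say) converts one interior boundary curve into two \emph{oppositely oriented} essential curves on $T_0$. Homologically this preserves $[\ob{Q}]$, but you now have three boundary components on $T_0$ where before you had one. The homology class does not force a single boundary component to persist; the paper instead uses a geometric ``annulus attachment trick'' (capping off an annulus between the original boundary curve and one of the new pair), followed by compressions, boundary compressions, and discarding null-homologous components, with a \emph{separate} homological observation at the end to keep the component with boundary on $T_1$. The conditional structure of (B2)/(B3) in the statement exists precisely because these maneuvers can change boundary slopes, which your one-paragraph treatment does not confront.

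Finally, the complexity measure you propose — $(-\chi(\ob{Q}),\tilde q(\ob{Q}))$ — is the one the paper uses only under hypothesis (II). Under (I) the paper uses $(\tilde q(\ob{R}),-\chi(R))$. The reason this matters is tied to the annulus attachment trick above: in case (II), compressing the surface after attaching the annulus may \emph{increase} $\tilde q$ while strictly decreasing $-\chi$, so $-\chi$ must be primary; in case (I) the annulus trick is not needed (since (B1) requires (II)) and the boundary-compressions themselves decrease $\tilde q$ first. If you try to run a single complexity measure through both cases you will find steps where it fails to decrease. Beyond these specific gaps, the bulk of the paper's proof is the explicit case analysis (cases 1.1 through 3.5) of where the arc $\epsilon$ can lie on $F$ relative to the decomposition of $\eta(\ob{\beta})\cup\eta(\ob{\beta}^*)$ into a torus, a pair of pants, and $1$-handles; your proposal acknowledges this as the ``key mechanism'' but gives no indication of what the subcases are or why different subcases demand different operations (isotopy vs.\ boundary compression vs.\ annulus attachment). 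That analysis is where the theorem actually lives.
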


Property (B1), which is the most unpleasant to achieve, is present to guarantee that if $\ob{R}$ is a Seifert surface for $L_\beta$ then $\ob{Q}$ (possibly after discarding components) is a Seifert surface for $L_\beta$. This is not used subsequently in this paper, but future work is planned which will make use of it. However, achieving property (D2) which is used here, requires similar considerations. In this paper, we will often want to achieve (D2) which is incompatible with (B3). The paper \cite{T}, however, does not need (D2) and so we state the theorem in a fairly general form.

The only difficulty in proving the theorem is keeping track of the listed properties of $\ob{Q}$ and $\ob{R}$. Eliminating $a$--boundary compressions is psychologically easier than eliminating $a$--torsion $2g$--gons, so we first go through the argument that a surface $\ob{Q}$ exists which has all but properties (D2) - (D4). The argument may be easier to follow if, on a first reading, $\ob{R}$ is considered to be a sphere or essential disc. The proof is based on similar work in \cite{S5}, which restricts $\ob{R}$ to being a sphere or disc.  

The main purpose of assumptions (I) and (II) is to easily guarantee that the process for creating $\ob{Q}$ described below terminates. We will show that if $\tild{q}(\ob{R}) \neq 0$ and there is an $a$--boundary compressing disc or $a$--torsion $2g$-gon for $R = \ob{R} \cap N$ then there is a sequence of operations on $\ob{R}$ each of which reduces a certain complexity but preserves the properties listed above (including essentiality of $\ob{R}$). If (I) holds, the complexity is $(\tild{q}(\ob{R}),-\chi(R))$ (with lexicographic ordering). If (II) holds, the complexity is $(-\chi(\ob{R}),\tild{q}(\ob{R}))$ (with lexicographic ordering). If (II) holds, it is clear that $-\chi(\ob{R})$ is always non-negative. It will be evident that each measure of complexity has a minimum. The process stops either when $\tild{q} = 0$ or when the minimum complexity is reached.

\subsection{Eliminating compressions}
Suppose that $R$ is compressible and let $D$ be a compressing disc. Since $\ob{R}$ is incompressible, $\boundary D$ is inessential on $\ob{R}$. Compress $\ob{R}$ using $D$. Let $\ob{Q}$ be the new surface. $\ob{Q}$ consists of a surface of the same topological type as $\ob{R}$ and an additional sphere. We have $\tild{q}(\ob{Q}) = \tild{q}(\ob{R})$. If we are assuming (II), the sphere component must be inessential in $N[b]$ and so may be discarded. Notice that in both cases (I) and (II) the complexity has decreased. Since $R$ can be compressed only finitely many times, the complexity cannot be decreased arbitrarily far by compressions.

\subsection{Eliminating $a$--boundary compressions}
Assume that $\tild{q} \neq 0$ and that there is an $a$--boundary compressing disc $D$ for $R$ with $\boundary D = \delta \cup \epsilon$ where $\epsilon$ is a subarc of some essential circle in $\eta(a)$. There is no harm in considering $\epsilon \subset a - \boundary R$.

\subsubsection*{Case 1: $b$ separates $W$} 
In this case, $\eta(\ob{\beta}) - \interior \ob{R}$ consists $q - 1$ copies of $D^2 \times I$ labelled $W_1, \hdots, W_{q-1}$. There are two components $T_0$ and $T_1$ of $\boundary_0 N[b] = \boundary N[b] - \boundary N$, both tori. The frontiers of the $W_j$ in $\eta(\ob{\beta})$ are discs $\beta_1, \hdots, \beta_q$, each parallel to $\beta$, the core of the 2--handle attached to $b$.  Each 1-handle $W_j$ lies between $\beta_j$ and $\beta_{j+1}$.  The torus $T_0$ is incident to $\beta_1$ and the torus $T_1$ is incident to $\beta_q$.  See Figure \ref{WminusQ}.

\begin{figure}[ht] 
\labellist
\hair 2pt
\pinlabel $T_0$ [b] at 84 175
\pinlabel $W_1$ [b] at 144.5 175
\pinlabel $W_{q-1}$ [b] at 206 175
\pinlabel $T_1$ [b] at 274.5 175
\pinlabel $\beta_1$ [tl] at 135 24
\pinlabel $\beta_{q}$ [tl] at 256 24
\endlabellist
\center
\includegraphics[scale=0.8]{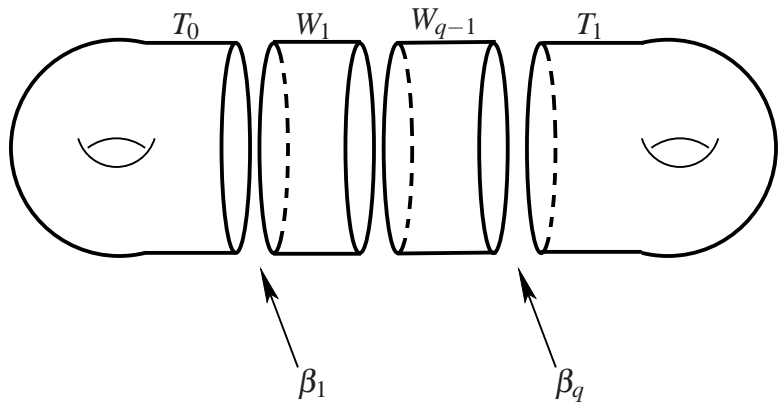}
\caption{The tori and 1-handles $W_j$}
\label{WminusQ}
\end{figure}

The interior of the arc $\epsilon \subset F$ is disjoint from $\boundary R$.  Consider the options for how $\epsilon$ could be positioned on $W$:

\subsubsection*{Case 1.1: $\epsilon$ lies in $\boundary W_j \cap F$ for some $1 \leq j \leq q-1$}
In this case, $\epsilon$ must span the annulus $\boundary W_j \cap F$.  The 1-handle $W_j$ can be viewed as a regular neighborhood of the arc $\epsilon$.  The disc $D$ can then be used to isotope $W_j$ through $\boundary D \cap R$ reducing $|\ob{R} \cap \ob{\beta}|$ by 2. See Figure \ref{Fig: Qisotopy1.1}. This maneuver decreases $\tild{q}(\ob{R})$. Alternatively, the disc $E$ describes an isotopy of $\ob{R}$ to a surface $\ob{Q}$ in $N[b]$ reducing $\tild{q}$. Clearly, $\ob{Q}$ satisfies the (C) and (B) properties. 

\begin{figure}[ht] 
\labellist
\hair 2pt
\pinlabel $R$ [t] at 102 250
\pinlabel $\delta$ [b] at 183 204
\pinlabel $W_j$ at 183 41.5
\endlabellist
\center
\includegraphics[scale=0.4]{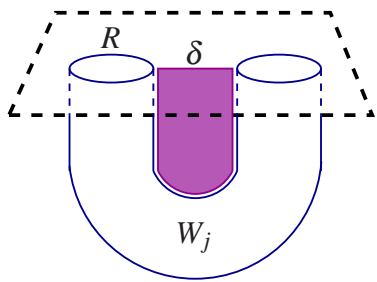}
\caption{The disc $D$ describes an isotopy of $\ob{R}$.}
\label{Fig: Qisotopy1.1}
\end{figure}

Suppose, then, that $\epsilon$ is an arc on $T_0$ or $T_1$.  Without loss of generality, we may assume it is on $T_0$.

\subsubsection*{Case 1.2: $\epsilon$ lies in $T_0$ and has both endpoints on $\boundary \ob{R}$} This is impossible since $\ob{R}$ was assumed to be essential in $N[b]$ and $\tild{q} > 0$.
  
\subsubsection*{Case 1.3: $\epsilon$ lies in $T_0$ and has one endpoint on $\boundary \beta_1$ and the other on $\boundary \ob{R}$} The disc $D$ guides a proper isotopy of $\ob{R}$ to a surface $\ob{Q}$ in $N[b]$ which reduces $\tild{q}$. See Figure \ref{Fig: Qisotopy1.3}. Clearly, the (C) and (D) properties are satisfied.

\begin{figure}[ht] 
\labellist
\hair 2pt
\pinlabel $T_0$ [t] at 211 54
\pinlabel $\boundary \ob{R}$ [tr] at 67 35
\pinlabel $D$ [tl] at 421 110
\pinlabel $R$ [l] at 411 220
\pinlabel $\beta_1$ [b] at 404 284
\endlabellist
\center
\includegraphics[scale=0.4]{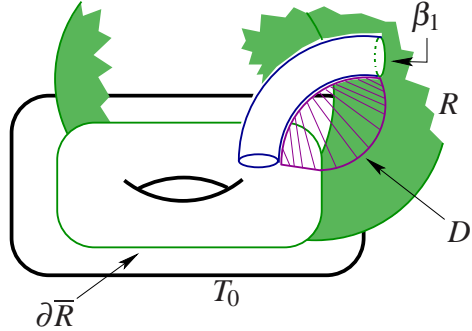}
\caption{The disc $D$ describes an isotopy of $\ob{R}$.}
\label{Fig: Qisotopy1.3}
\end{figure}

\subsubsection*{Case 1.4: $\epsilon$ lies in $T_0$ and has endpoints on $\boundary \beta_1$.} Boundary-compressing $\ob{R} - \inter{\beta_1}$ produces a surface $\ob{J}$ with two new boundary components on $T_0$, both of which are essential curves. They are oppositely oriented and bound an annulus containing $\beta_1$. If $\boundary \ob{R} \cap T_0 \neq \nil$ then these two new components have the same slope on $T_0$ as $\boundary \ob{R}$, showing that property (B4) is satisfied. It is easy to check that $\chi(\ob{J}) = \chi(\ob{R})$ and that $\tild{q}(\ob{J}) = \tild{q}(\ob{R}) - 1$, so that (C1) is satisfied. Clearly, (C2), (C3), and (B3) are also satisfied.

If $\ob{J}$ were compressible, there would be a compressing disc for $\ob{R}$ by an outermost arc/innermost disc argument. Thus, $\ob{J}$ is incompressible. Suppose that $E$ is a boundary-compressing disc for $\ob{J}$ in $N[b]$ with $\boundary E = \kappa \cup \lambda$ where $\kappa$ is an arc in $\boundary N[b]$ and $\lambda$ is an arc in $\ob{J}$. Since $\ob{R}$ is boundary-incompressible, the arc $\kappa$ must lie on $T_0$ (and not on $T_1$). Since $T_0$ is a torus, either some component of $\ob{J}$ is a boundary-parallel annulus or $\ob{J}$ (and, therefore, $\ob{R}$) is compressible. We may assume the former. If $\ob{J}$ has other components apart from the boundary-parallel annulus, discarding the boundary-parallel annulus leaves a surface $\ob{Q}$ satisfying the (C) and (B) properties. We may, therefore, assume that $\ob{J}$ in its entirety is a boundary-parallel annulus.

Since $\chi(\ob{R}) = \chi(\ob{J})$, since $\ob{J}$ is a boundary-parallel annulus and since $\boundary \ob{J}$ has two more components then $\boundary \ob{R}$, $\ob{R}$ is an essential torus. However, using $D$ to isotope $\eta(\delta) \subset \ob{R}$ into $T_0$ and then isotoping $\ob{J}$ into $T_0$ gives a homotopy of $\ob{R}$ into $T_0$, showing that it is not essential, a contradiction. 

Thus, after possibly discarding a boundary-parallel annulus from $\ob{J}$ to obtain $\ob{L}$ we obtain a non-empty essential surface in $N[b]$ satisfying the first five required properties. If we do not desire property (B1) to be satisfied, take $\ob{Q} = \ob{L}$. Notice that this step may, for example, convert an essential sphere into two discs or an essential disc with boundary on $\boundary_1 N[b]$ into an annulus and a disc with boundary on $\boundary_0 N[b]$. This fact accounts for the delicate phrasing of the (B) properties.

Suppose, therefore, that we wish to satisfy (B1). Among other properties, we assume that $\ob{R}$ has a single boundary component on $T_0$.

There is an annulus $A \subset T_0$ which is disjoint from $\beta_1 \subset T_0$, which has interior disjoint from $\boundary \ob{L}$, and which has its boundary two of the two or three components of $\boundary \ob{L}$. See Figure \ref{Fig: AttachingAnnulus}. In the figure, the dashed line represents the arc $\epsilon$. The two circles formed by joining $\epsilon$ to $\boundary \beta_1$ are the two new boundary components of $\ob{L}$. Since, they came from a boundary-compression, they are oppositely oriented. If $\boundary \ob{R}$ has a single component on $T_0$ (indicated by the curve with arrows in the figure) then it must be oriented in the opposite direction from one of the new boundary components of $\boundary \ob{L}$. Attaching $A$ to $\ob{L}$ creates an orientable surface and does not increase negative euler characteristic or $\tild{q}$.

\begin{figure}[ht] 
\labellist
\hair 2pt
\pinlabel $T_0$ [t] at 47 41
\pinlabel $\beta_1$ at 398 189
\endlabellist
\center
\includegraphics[scale=0.4]{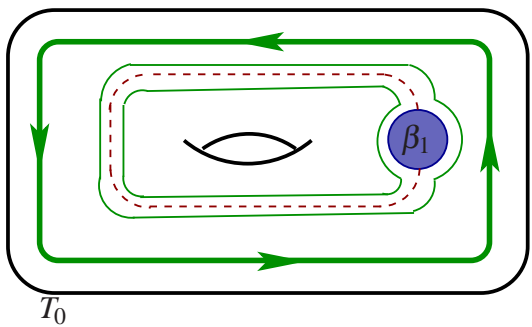}
\caption{The annulus $A$ lies between $\boundary \ob{R}$ and one of the new boundary components of $\ob{L}$.}
\label{Fig: AttachingAnnulus}
\end{figure}

Thus, if $|\boundary \ob{R} \cap T_0| \leq 1$, $\ob{L} \cup A$ is well-defined. It may, however, be compressible or boundary-compressible. Since it represents the homology class $[\ob{R}]$ in $H_2(N[b],\boundary N[b])$, as long as that class is non-zero we may thoroughly compress and boundary-compress it, obtaining a surface $\ob{J}$. Discard all null-homologous components of $\ob{J}$ to obtain a surface $\ob{Q}$. By assumption (II), we never discard an essential sphere or disc. Notice that since $\boundary \ob{R}$ has a single boundary component on $T_1$, the surface $\ob{Q}$ will also have a single boundary component on $T_1$. I.e. discarding separating components of $\ob{J}$ does not discard the component with boundary on $T_1$. Boundary-compressing $\ob{J}$ may change the slope of $\boundary \ob{J}$ on non-torus components of $\boundary_1 N[b]$. Discarding separating components may convert a slope on a torus component to the empty slope. Nevertheless, properties (B2) and (B3) still hold.

If a component of $\ob{J}$ is an inessential sphere then either $\ob{L}_A$ contained an inessential sphere or the sphere arose from compressions of $\ob{L}_A$. Suppose that the latter happened. Then after some compressions $\ob{L}_A$ contains a solid torus and compressing that torus creates a sphere component. Discarding the torus instead of the sphere shows that this process does not increase negative euler characteristic. If $\ob{L}_A$ contains an inessential sphere, this component is either a component of $\ob{L}$ and therefore of $\ob{R}$ or it arose by attaching $A$ to two disc components, $D_1$ and $D_2$, of $\ob{L}$. The first is forbidden by the assumption that $\ob{R}$ is essential and the second by (II). Consequently, negative euler characteristic is not increased.

Notice that, in general, compressing $\ob{L}_A$ may increase $\tild{q}$, but because $-\chi(\ob{Q})$ is decreased, property (C1) is still preserved and complexity is decreased. Since we assume (II) for the maneuvre, if (I) holds at the end of this case we can still conclude that $\tild{q}$ was decreased. (This is an observation needed to show that the construction of $\ob{Q}$ for the conclusion of the theorem terminates.)

\subsubsection*{Case 2: $b$ is non-separating and $q^* \neq 0$.}  
This is very similar to Case 1.  In what follows only the major differences are highlighted. 

Since $q^* \neq 0$, the cocore $\ob{\beta}^*$ of the 2--handle attached to $b^*$ and the cocore $\ob{\beta}$ form an arc with a loop at one end. Let $U = \eta(\ob{\beta^*} \cup \ob{\beta})$. Then $U- \ob{R}$ consists of a solid torus $q^* - 1$ copies of $D^2 \times I$ labelled $W^*_1, \hdots, W^*_{q^* - 1}$ with frontier in $U$ consisting of discs $\beta^*_1, \hdots, \beta^*_{q^*}$ parallel to  $\beta^*$ (the core of the 2--handle attached to $b^*$), a 3--ball $\mc{P}$ with frontier in $U$ consisting of 3 discs: $\beta^*_{q*}$, $\beta_1$, and $\beta_q$, $q - 1$ copies of $D^2 \times I$ labelled $W_1, \hdots, W_{q-1}$ with frontiers $\beta_1, \hdots, \beta_q$ consisting of discs parallel to $\beta$.  See Figure \ref{WminusQ2}. $\boundary_0 N[b]$ consists of a single torus $T_0$.

\begin{figure}[ht] 
\labellist
\hair 2pt
\pinlabel $T_0$ at 88 63
\pinlabel $\beta^*_1$ [t] at 117 50
\pinlabel $W^*_1$ at 149.5 63
\pinlabel $W^*_{q^* - 1}$ at 208 63
\pinlabel $\mc{P}$ at 287.5 63
\pinlabel $\beta_q$ [t] at 344 28
\pinlabel $W_{q-1}$ [t] at 378 59
\pinlabel $W_2$ [l] at 449 190
\pinlabel $\beta_2$ [l] at 463 228
\pinlabel $W_1$ [bl] at 394 249
\pinlabel $\beta_1$ [b] at 330 257
\endlabellist
\center
\includegraphics[scale=0.7]{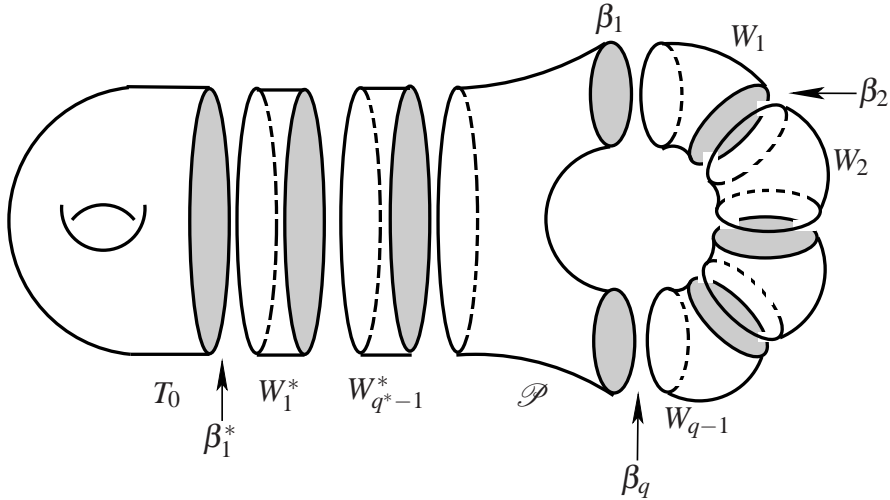}
\caption{The torus, pair of pants, and 1-handles.}
\label{WminusQ2}
\end{figure}

\subsubsection*{Case 2.1 : $\epsilon$ is not located in $\mc{P}$} This is nearly identical to Case 1. To achieve (B1), an ``annulus attachment'' trick like that in Case 1.4 is necessary. 

\subsubsection*{Case 2.2: $\epsilon$ is located in $\mc{P}$} Since $\boundary \ob{R}$ is essential in $N[b]$ and since $\ob{R}$ is embedded, $\boundary \ob{R}$ is disjoint from $\mc{P}$.  The arc $\epsilon$ has its endpoints on exactly two of $\{\boundary \beta^*_{q^*}, \boundary \beta_1, \boundary \beta_{q}\}$.  Denote by $x$ and $y$ the two discs containing $\boundary \epsilon$ and denote the third by $z$.  That is, $\{\boundary x, \boundary y, \boundary z\} = \{\boundary \beta^*_{q^*}, \boundary \beta_1, \boundary \beta_{q}\}$.  Boundary-compressing $\cls(\ob{Q} - (x \cup y))$ along $D$ removes $\boundary x$ and $\boundary y$ as boundary-components of $R$ and adds another boundary-component parallel to $\boundary z$.  Attach a disc in $F$ parallel to $z$ to this new component, forming $\ob{J}$. $\ob{J}$ is isotopic in $N[b]$ to $\ob{R}$ (Figure \ref{Fig: PantsIsotopy}) and is, therefore, essential and satisfies the (C) and (B) properties.

\begin{figure}[ht] 
\labellist
\hair 2pt
\pinlabel $z$ at 19 206
\pinlabel $x$ at 217 278
\pinlabel $y$ at 155 133
\pinlabel $\mc{P}$ [b] at 68 267
\pinlabel $D$ at 139 215
\pinlabel $R$ [br] at 235 400
\endlabellist
\center
\includegraphics[scale=0.5]{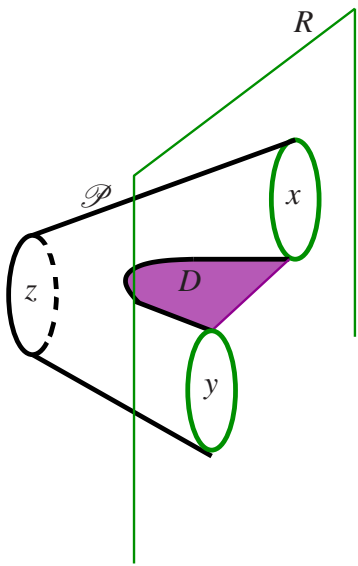}
\caption{The disc $D$ in Case 2.2}
\label{Fig: PantsIsotopy}
\end{figure}

\subsubsection*{Case 3: $b$ is non-separating and $q^* = 0$} 

Since $b$ is non-separating, $\eta(\ob{\beta}) - \ob{Q}$ consists of copies of $D^2 \times I$ labelled $W_1, \hdots, W_{q-1}$ which are separated by discs $\beta_1, \hdots, \beta_q$ each parallel to $\beta$ so that each $W_i$ is adjacent to $\beta_{i}$ and $\beta_{i+1}$ where the indices run mod $q$. $\boundary_0 N[b]$ is a single torus $T_0$. See Figure \ref{WminusQnonsep}.

\begin{figure}[ht] 
\labellist
\hair 2pt
\pinlabel $T_0$ [t] at 91.5 72
\pinlabel $\beta_q$ [t] at 265 24
\pinlabel $W_{q-1}$ [t] at 301 56
\pinlabel $W_2$ [l] at 369 184
\pinlabel $W_1$ [bl] at 309 245
\pinlabel $\beta_1$ [b] at 251 251
\pinlabel $\beta_2$ [l] at 384 224
\endlabellist
\center
\includegraphics[scale=0.7]{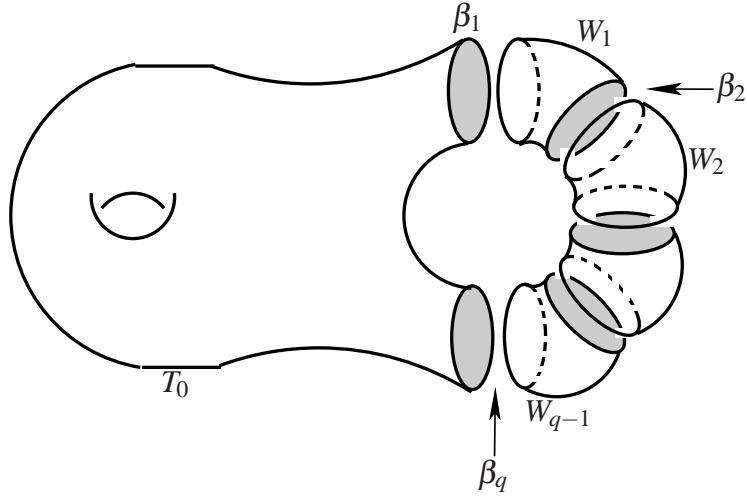}
\caption{The solid torus and 1-handles $W_j$}
\label{WminusQnonsep}
\end{figure}

We need only consider the following cases, as the others are similar to prior cases.

\subsubsection*{Case 3.4: $\epsilon$ is located on $T_0$ and either both endpoints are on $\boundary \beta_1$ or both are on $\boundary \beta_{q}$}  The arc $\epsilon$ is a meridional arc.  Suppose, without loss of generality, that $\boundary \epsilon \subset \boundary \beta_1$. Boundary-compress $\ob{R} - \inter{\beta_1}$ along $D$.  This creates a surface $\ob{J}$ with boundary on $T_0$. After possibly discarding a boundary-parallel annulus $\ob{J}$ is essential and the (C) properties hold as well as (B2) and (B3). We need to show that (B1) can be achieved, if desired.

Suppose that we are in the situation of ``refilling meridians'' so that $N \subset M$ and $F$ bounds a genus 2 handlebody $W$ in $M - N$ with $a$ and $b$ bounding discs in $W$. Then since the endpoints of $\epsilon$ are on the same component of $\boundary \eta(a) \subset F$, $\epsilon$ is a meridional arc of $b - a$. If $\boundary \ob{R}$ is not meridional on $T_0$ this case, therefore, cannot occur. Thus, the (C) and (B) properties hold.

\subsubsection*{Case 3.5: $\epsilon$ is located on $T_0$ and has one endpoint on $\beta_1$ and the other on $\beta_q$} The disc $D$ guides an isotopy of $\ob{R}$ to a surface $\ob{Q}$ which is suitably embedded in $M[\beta]$ and has $q^*(\ob{Q})  = 1$. We have $\tild{q}(\ob{Q}) = \tild{q}(\ob{R}) - 1$. The surface $\ob{Q}$ can also be created by boundary-compressing $\ob{R} - (\beta_1 \cup \beta_q)$ with $D$ and then adding a disc $\beta^*$ to the new boundary component. See Figure \ref{Fig: PantsIsotopy}. Clearly, the (C) and (B) properties hold.

The previous cases have each described an operation on $\ob{R}$ which produces an essential surface $\ob{Q}$ having the (C) and (B) properties. Furthermore, the maneuvre described in each case strictly decreases complexity. Thus, after repeating the operation enough times either the surface $\ob{Q}$ will have $\tild{q}(\ob{Q}) = 0$ or there will be no $a$--boundary compressions for $Q$. That is, the (C) and (B) properties hold and, in addition, (D1) holds.

\subsection{Eliminating $a$--torsion $2g$--gons}
We may now assume that there is an $a$--torsion $2g$--gon $D$ for $Q$ with $g \geq 2$ (since an $a$--torsion $2$--gon is an $a$--boundary compressing disc). For ease of notation, relabel and let $\ob{R} = \ob{Q}$ and $R = Q$. By the definition of $a$--torsion $2g$--gon, there is a rectangle $E$ containing the parallel arcs $\boundary D \cap F$ which, when attached to $R$, creates an orientable surface. Two opposite edges of $\boundary E$ lie on $\boundary R$ and the other two are parallel (as un-oriented arcs) to the arcs of $\boundary D \cap F$. Denote the components of $\boundary R$ containing the two edges of $\boundary E$ by $\boundary_x$ and $\boundary_y$. It is entirely possible that $\boundary_x = \boundary_y$. If $\boundary_x$ is a component of $\boundary R - \boundary \ob{R}$, let $\beta_x$ denote the disc in $\ob{R} - R$ which it bounds. Similarly define $\beta_y$. 

Suppose that $\ob{R}$ is a planar surface or 2--sphere. Let $\wihat{N}$ be the 3--manifold obtained from $N[b]$ by attaching 2--handles to $\boundary N[b]$ in such a way that each component, but one, of $\boundary \ob{J}$ bounds a disc in $\wihat{N}$. Attach these discs to $\ob{R}$ forming a surface $\wihat{R}$. Since $\ob{R}$ was a planar surface or 2--sphere, $\wihat{R}$ is a disc or 2--sphere. A regular neighborhood of $\wihat{R} \cup E$ is a solid torus and the disc $D$ is in the exterior of that solid torus and winds longitudinally around it $n \geq 2$ times. Thus $\eta(\wihat{R} \cup E \cup D)$ is a lens space connected summand of $\wihat{N}$. Hence, redefining $\ob{Q} = \ob{J}$ we satisfy the (C), (B), and (D) properties.

We may, therefore, assume that $\ob{R}$ is not a planar surface or 2--sphere. We need to show that we can achieve (D2) in addition to the (C), (B), and (D1) properties. The surface $\ob{R}' = (\ob{R} - (\beta_x \cup \beta_y)) \cup E$ is compressible by the disc $D$. Compress it to obtain an orientable surface $\ob{J}$. Notice that 
\[
 (-\chi(\ob{J}),\tild{q}(\ob{J})) < (-\chi(\ob{R}),\tild{q}(\ob{R})).
\]

Analyzing the position of $E$ as we did the position of $\epsilon$ in the previous section and possibly performing the ``annulus attachment trick'', we can guarantee that the (C) and (B) properties are satisfied. If the ends of $E$ are both on $\boundary \ob{R}$ then the boundary of $\ob{J}$ may have different slope from the boundary of $\ob{R}$. Whether or not we perform the annulus attachment trick, the surface $\ob{J}$ may be inessential. Compressing, boundary\hyph compressing, and discarding null-homologous components produces a non-empty essential surface $\ob{Q}$ satisfying properties (B) and (C). Considerations similar to those necessary for achieving (B1) in case 1.4 explain why (B2) is phrased as it is. (B3) is incompatible with (D2) since discarding components may discard $\boundary \ob{R} \cap \boundary_0 N[b]$ converting a non-empty slope to an empty slope. A future attempt to eliminate an $a$--boundary compressing disc or $a$--torsion $2g$--gon may then introduce new boundary components on $\boundary_0 N[b]$ of different slope. 

As before, complexity has been strictly decreased for both assumptions (I) and (II). Of course, we may now have additional compressing discs, $a$--boundary compressing discs, or $a$--torsion $2g$--gons to eliminate as in the previous sections. Since all these operations lower complexity, the process terminates with the required surface $\ob{Q}$.
\qed

The surface $\ob{Q}$ produced by the previous theorem may be disconnected. (For example, if $b$ is separating it is possible we could start with $\ob{R}$ being a disc with boundary on $T_0$ and end up with $\ob{Q}$ the union of an annulus with boundary on $T_0 \cup T_1$ and a disc with boundary on $T_1$.) The next corollary puts our minds at rest by elucidating when we can discard components to arrive at a connected surface $\ob{Q}$.

\begin{corollary}\label{Cor: Constructing Q}
The following statements are true:
\begin{itemize}
\item If $\ob{R}$ is a collection of spheres or discs then after discarding components of the surface $\ob{Q}$ created by Theorem \ref{Thm: Constructing Q} we may assume that $\ob{Q}$ is an essential sphere or disc such that $\tild{q}(\ob{Q}) \leq \tild{q}(\ob{R})$ and conclusions (B2), (B3), (D0), (D1), (D3), and (D4) hold. 
\item If $N[b]$ does not contain an essential disc or sphere, then we may assume the $\ob{Q}$ produced by Theorem \ref{Thm: Constructing Q} to be connected and Conclusions (C1), (C2), (B2), and (D0) - (D4) hold. Furthermore, if $\ob{R}$ is non-separating, so is $\ob{Q}$. 
\end{itemize}
\end{corollary}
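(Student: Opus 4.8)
The plan is to run Theorem~\ref{Thm: Constructing Q}, then delete all but one component of the resulting surface and check that the listed conclusions survive. Throughout I use that the proof of Theorem~\ref{Thm: Constructing Q} in fact produces an \emph{essential} surface $\ob Q$, not merely an incompressible and boundary-incompressible one.

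For the first statement, apply Theorem~\ref{Thm: Constructing Q} under hypothesis (I) without invoking (*D2) or (*B1), obtaining an essential $\ob Q$ with (C1)--(C3), (D0), (D1), (D3), (D4), (B2), (B3). Since $\ob R$ is a non-empty collection of essential spheres and discs, $\chi(\ob R)\geq 1$ and every component of $\ob R$ has genus $0$; hence by (C1) and (C2), $\chi(\ob Q)\geq\chi(\ob R)\geq 1$ and every component of $\ob Q$ has genus $0$. A genus-$0$ surface of positive Euler characteristic is a sphere or a disc, so some component $\ob Q_0$ of $\ob Q$ is one, and it is essential because $\ob Q$ is; discard the other components. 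Then $\tild q(\ob Q_0)\leq\tild q(\ob Q)\leq\tild q(\ob R)$, the second inequality holding because under hypothesis (I) the number $\tild q$ is the leading coordinate of the complexity that the construction strictly decreases at each step, hence is non-increasing. Conclusions (D0) and (D1) pass to $\ob Q_0$ and $Q_0=\ob Q_0\cap N$ by routine innermost-disc/outermost-arc arguments, together with the fact that $\tild q(\ob Q)=0$ forces $\tild q(\ob Q_0)=0$; and (B2), (B3) pass because discarding boundary curves replaces each boundary slope by a sub-slope, which is no larger in the partial order on slopes. Finally, if the disc-or-sphere $\ob Q_0$ carried an $a$--torsion $2g$--gon with $g\geq 2$, apply the ``$\ob R$ is a planar surface or $2$--sphere'' step of the proof of Theorem~\ref{Thm: Constructing Q} with $\ob R$ replaced by $\ob Q_0$: since $\ob Q_0$ has at most one boundary curve, the auxiliary manifold $\wihat N$ of that step equals $N[b]$, and the step exhibits a lens space connected summand of $N[b]$. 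This gives (D3), and (D4) follows the same way.

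For the second statement, instead apply Theorem~\ref{Thm: Constructing Q} under hypothesis (II), invoking (*D2) and declining (*B1) and (B3); this yields an essential $\ob Q$ with (C1)--(C3), (B2), (D0)--(D4), and --- except in the case $\tild q(\ob Q)=0$, in which the relevant (D) conclusions are vacuous --- with $Q$ carrying no $a$--torsion $2g$--gon and no $a$--boundary compressing disc. Since $N[b]$ has no essential sphere or disc and $\ob Q$ is essential, no component of $\ob Q$ is a sphere or a disc, so each component has $\chi\leq 0$. If $\ob R$ is non-separating then $[\ob R]\neq 0$ in $H_2(N[b],\boundary N[b])$, so by (C3) $[\ob Q]\neq 0$ and some component $\ob Q_0$ has $[\ob Q_0]\neq 0$, hence is non-separating; if $\ob R$ is separating, take $\ob Q_0$ to be any component. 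Discard the others. As every component of $\ob Q$ has $\chi\leq 0$ we get $-\chi(\ob Q_0)\leq-\chi(\ob Q)$ and $\tild q(\ob Q_0)\leq\tild q(\ob Q)$, so (C1) survives; (C2) survives since the total genus only drops; (B2), being a conditional, remains true and its non-vacuous instances survive as before; and (D0) passes as in the first statement. For the rest: (D1) holds for $Q_0$ because $Q_0$ inherits from $Q$ the absence of an $a$--boundary compressing disc (or else $\tild q(\ob Q_0)=0$); (D2) holds because $Q_0$ inherits the absence of $a$--torsion $2g$--gons, whether or not $\ob Q_0$ separates; (D3) is vacuous as $\ob Q_0$ is not a disc or sphere; and (D4), when $\ob Q_0$ is planar, holds because $Q_0$ has no $a$--torsion $2g$--gon. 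Lastly, the chosen $\ob Q_0$ is non-separating whenever $\ob R$ is.

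Most of this is bookkeeping, since each listed property is ``downward closed'' under discarding components and under $\ob Q\mapsto Q=\ob Q\cap N$. The one step needing genuine care is (D3)--(D4) in the first statement, where discarding components turns a possibly disconnected $\ob Q$ into a single disc or sphere; rather than transferring (D4) from a disconnected surface, one re-invokes the lens-space step of Theorem~\ref{Thm: Constructing Q}, and the observation that $\wihat N=N[b]$ for a disc or sphere is what makes it go through. In the second statement the only non-formal ingredient is the homological argument that picks out a non-separating component when $\ob R$ is non-separating.
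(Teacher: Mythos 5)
Your first bullet essentially matches the paper's argument: pick a sphere or disc component $\ob{Q}_0$ using (C1)/(C2), inherit (D0)/(D1) by an innermost-disc/outermost-arc argument (intersections of a putative $a$-boundary compressing disc for $Q_0$ with the other components of $Q$ are circles, removable by incompressibility), and re-invoke the lens-space step of the proof of Theorem~\ref{Thm: Constructing Q} for (D3)/(D4). Your observation that $\wihat{N}=N[b]$ when $\ob{Q}_0$ is a sphere or disc is correct and is exactly what makes (D3) follow from (D4).

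Your second bullet takes a genuinely different, one-shot route from the paper's proof, which instead iterates: pick a component, re-apply Theorem~\ref{Thm: Constructing Q} with that component as the new $\ob{R}$, discard again, and terminate by strict decrease of complexity. Your static version has a gap. You assert that invoking (*D2) yields ``$Q$ carrying no $a$-torsion $2g$-gon'' whenever $\tild{q}(\ob{Q})\neq 0$, but the hypothesis of (*D2) is ``no component of $\ob{R}$ is separating'', not merely $\tild{q}\neq 0$. The corollary permits $\ob{R}$ to be separating (the ``Furthermore'' clause is conditional on non-separation), and in that case (D2) is vacuous, so the theorem's $Q$ --- and hence $Q_0$ --- may carry an $a$-torsion $2g$-gon even with $\tild{q}>0$. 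Your justification for (D4), ``$Q_0$ has no $a$-torsion $2g$-gon'', therefore fails precisely when $\ob{R}$ has a separating component. Nor does (D4)'s lens-space alternative transfer from $\ob{Q}$ to $\ob{Q}_0$ by inheritance, since $\wihat{N}$ depends on $\boundary\ob{Q}$, which generally differs from $\boundary\ob{Q}_0$. The repair is the same device you already use in the first bullet: if $\ob{Q}_0$ is planar and $Q_0$ carries an $a$-torsion $2g$-gon with $g\geq 2$ (and, by inherited (D1), no $a$-boundary compressing disc), run the lens-space step of the proof of Theorem~\ref{Thm: Constructing Q} directly on $\ob{Q}_0$ --- that step needs no non-separation hypothesis --- to exhibit a lens-space summand of the $\wihat{N}$ built from $\boundary\ob{Q}_0$, giving (D4). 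Alternatively, follow the paper's iteration, which absorbs this case automatically.
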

\begin{proof}
Suppose that $\ob{R}$ is a collection of spheres or a discs and let $\tild{Q}$ be the surface produced by Theorem \ref{Thm: Constructing Q}. Notice that each component of $\tild{Q} \cap N$ is incompressible. Since $-\chi(\ob{R}) < 0$, by conclusions (C1) and (C2) of that theorem, $-\chi(\tild{Q}) < 0$ and each component of $\tild{Q}$ is a planar surface or $\tild{Q}$ is a sphere. Indeed, at least one component $\ob{Q}$ of $\ob{Q}$ is a sphere or disc. By conclusion (D1), either $\tild{Q}$ is disjoint from $\ob{\beta}$ or there is no $a$--boundary compressing disc for $\tild{Q} \cap N$. If there is an $a$--boundary compressing disc for $\ob{Q} \cap N$ then an outermost arc argument shows that there would be one for $\tild{Q} \cap N$. Thus, either $\ob{Q}$ is disjoint from $\ob{\beta}$ or there is no $a$--boundary compressing disc for $\ob{Q}$. As argued in the proof of Theorem \ref{Thm: Constructing Q}, if there is an $a$--torsion $2g$--gon for $Q$, then $N[b]$ contains a lens-space connected summand. It is clear, therefore, that the required conclusions hold.

Suppose that $N[b]$ contains no essential disc or sphere. Let $\tild{Q}$ be the surface produced by Theorem \ref{Thm: Constructing Q} and notice that $\tild{Q}$ contains no disc or sphere components. Notice also that each component of $\tild{Q} \cap N$ is incompressible. Choose a component $\tild{Q}_0$ of $\tild{Q}$ and discard the other components. Neither negative euler charactistic nor $\tild{q}$ are raised. If $\ob{R}$ was non-separating, choose $\tild{Q}_0$ to be non-separating. Either $\tild{Q}_0$ satisfies the conclusion of the corollary or $\tild{q}(\tild{Q}_0) > 0$ and there is an $a$--boundary compressing disc or $a$--torsion $2g$--gon for $ \tild{Q}_0 \cap N$. Apply the theorem with $\ob{R} = \tild{Q}_0$ and notice that the surface $\tild{Q}_1$ produced has strictly smaller complexity. Thus, repeating this process, each time discarding all but one component, we eventually obtain the connected surface $\ob{Q}$ promised by corollary. 
\end{proof}

\section{Refilling Meridians}\label{RM}
We now turn to applying the Main Theorem to ``refilling meridians''. For the remainder, suppose that $M$ is a 3--manifold containing an embedded genus 2 handlebody $W$. Let $N = M - \inter{W}$. Let $\alpha$ and $\beta$ be two essential discs in $W$ isotoped to intersect minimally and non-trivially. Let $a = \boundary \alpha$, $b = \boundary \beta$, $b^* = \boundary \beta^*$, $M[\alpha] = N[a]$, and $M[\beta] = N[b]$. Recall that $L_\alpha$ and $L_\beta$ are the cores of the solid tori produced by cutting $W$ along $\alpha$ and $\beta$ respectively. If we need to place sutures $\hat{\gamma}$ on $F = \boundary W$ we will do so as described in Section \ref{Sutures}. We begin by briefly observing that for any suitably embedded surface $\ob{Q} \subset M[\beta]$, with boundary disjoint from $\gamma \cap \boundary M$, $K(\ob{Q}) \geq 0$.

If $\alpha$ is separating, 
\[
K(\ob{Q}) = q(\Delta - 2) + q^*(\Delta^* - 2) + \Delta_\boundary.
\]
Since $b$, $b^*$, and $a$ all bound discs in $W$, $\Delta$ is at least two. If $q^* \neq 0$, then $\Delta^*$ is also at least two. Thus, $K(\ob{Q}) \geq 0$.

Recall (Section \ref{Sutures}) that if $\alpha$ is non-separating, any arc of $b - \inter{\eta}(a)$ with endpoints on the same component of $\boundary \eta(a)$ is a meridional arc of $b-a$. The number of these meridional arcs is denoted $\mc{M}_a(b)$ and it is always even and always at least two since there are the same number of meridional arcs based at each component of $\boundary \eta(a) \subset F$. The sutures $\hat{\gamma}$ are disjoint from these meridional arcs. Since any arc of $b-a$ which is not a meridional arc intersects exactly one suture exactly once, we have
\[
\Delta - \nu = \mc{M}_a(b) \geq 2
\]
and
\[
\Delta^* - \nu^* \geq \mc{M}_a(b^*) \geq 2.
\]
Since $\boundary \ob{Q}$ is disjoint from $b \cup b^*$, it is also disjoint from the meridional arcs of $b - a$. Consequently, each arc of $\boundary \ob{Q} - a$ intersects $\hat{\gamma}$ at most once. Hence, $\Delta_\boundary - \nu_\boundary \geq 0$. When $\alpha$ is non-separating, we, therefore, have
\[
K(\ob{Q}) \geq q(\mc{M}_a(b) - 2) + q^*(\mc{M}_a(b^*) - 2) + \Delta_\boundary - \nu_\boundary \geq 0.
\]
\subsection{Scharlemann's Conjecture}\label{RM: Scharlemann}
Studying the operation of refilling meridians, Scharlemann \cite{S5} was led to the following definitions and conjecture.

Define $(M,W)$ to be \defn{admissible} if
\begin{itemize}
\item[(A0)] every sphere in $M$ separates
\item[(A1)] $M$ contains no lens space connected summands
\item[(A2)] Any two curves in $\boundary M$ which compress in $M$ are isotopic in $\boundary M$
\item[(A3)] $M - W$ is irreducible
\item[(A4)] $\boundary M$ is incompressible in $N$.
\end{itemize}

He conjectured
\begin{conjecture}
If $(M,W)$ is admissible then one of the following occurs
\begin{itemize}
\item $M = S^3$ and $W$ is unknotted (i.e. $N$ is a handlebody)
\item At least one of $M[\alpha]$ and $M[\beta]$ is irreducible and boundary-irreducible
\item $\alpha$ and $\beta$ are ``aligned'' in $W$.
\end{itemize}
\end{conjecture}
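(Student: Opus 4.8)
The plan is to argue by contradiction, running the Main Theorem with the ``interesting'' surface placed in $M[\beta]=N[b]$ and the cocore arc $\ob{\alpha}$ placed in $M[\alpha]=N[a]$. So suppose $(M,W)$ is admissible, the first alternative fails (i.e.\ either $M\neq S^3$ or $N=M-\inter{W}$ is not a handlebody), $\alpha$ and $\beta$ are not aligned, and yet \emph{both} $M[\alpha]$ and $M[\beta]$ fail to be irreducible and boundary-irreducible. Since ``related by boring'' is symmetric we may name the discs so that $M[\beta]$ contains an essential sphere (if $M[\beta]$ is reducible) or essential disc (if $M[\beta]$ is boundary-reducible); call it $\ob{R}$. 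By admissibility $\ob{R}$ may be taken suitably embedded, so we are in case (I) of Theorem \ref{Thm: Constructing Q}. Applying that theorem (and Corollary \ref{Cor: Constructing Q}) produces a connected, suitably embedded, incompressible, boundary-incompressible surface $\ob{Q}\subset M[\beta]$ which is again a sphere or disc, lies in the class $[\ob{R}]$, has $\tild{q}(\ob{Q})\le\tild{q}(\ob{R})$, and satisfies the compression/torsion restrictions (D0)--(D4). The surface $Q=\ob{Q}\cap N$ will be the parameterizing surface for $M[\alpha]$.

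Next I would install sutures on $\boundary M[\alpha]=\boundary N[a]$. Following Section \ref{Sutures}, put $\hat{\gamma}$ on $\boundary_0 N[a]$ as meridians of the solid torus $W-\inter{\eta}(\alpha)$ separating the endpoints of $\ob{\alpha}$ and disjoint from the meridional arcs of $b-a$, then use Lemma \ref{Lem: Choosing Sutures} to extend by $\tild{\gamma}$ on $\boundary_1 N[a]=\boundary M$, chosen disjoint from the (at most two) curves of $\boundary\ob{Q}\cap\boundary M$, which play the role of the ``$c$'' of that lemma. Here the admissibility hypotheses (A3), (A4) and the non-alignment of $\alpha,\beta$ are exactly what supplies the ``no essential annulus'' and ``$F-(\gamma\cup a)$ incompressible'' hypotheses of Lemma \ref{Lem: Choosing Sutures}, so that $(N,\gamma\cup a)$ is $\nil$--taut and $(N[a],\gamma)$ is $\ob{\alpha}$--taut. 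Because $\boundary\ob{Q}\cap\boundary M$ is disjoint from $\gamma$, the computation of Section \ref{RM} gives $K(\ob{Q})\ge 0$. Now feed $(N[a],\gamma)$ and $Q$ into the Main Theorem: $Q$ is incompressible by (D0) and has no disc or sphere component disjoint from $\gamma\cup a$ by construction; and since $M[\alpha]$ is reducible (hence $N[a]$ is not $\nil$--taut), or boundary-reducible (so, as $\boundary M[\alpha]$ is a union of tori, irreducibility pushes $M[\alpha]$ to be a solid torus and, provided $\ob{\alpha}$ is not monotone, the third bullet of the hypothesis applies; alternatively one extracts a conditioned $\ob{\alpha}$--taut surface that is not $\nil$--taut), the Main Theorem applies. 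Its third conclusion, $-2\chi(\ob{Q})\ge K(\ob{Q})$, is impossible: $\ob{Q}$ is a sphere or disc, so $-2\chi(\ob{Q})\in\{-4,-2\}<0\le K(\ob{Q})$.

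It remains to defeat the other two conclusions of the Main Theorem. If $H_1(N[a])=H_1(M[\alpha])$ has non-trivial torsion, one produces a lens space connected summand of $M$, contradicting (A1) (this is where the stronger hypotheses of the paper, e.g.\ $M$ a link exterior in $S^3$, make the homological bookkeeping clean). If there is an $a$--torsion $2g$--gon for $Q$: for $g\ge 2$, property (D3) forces a lens space connected summand of $M[\beta]$, again contradicting (A1); for $g=1$ it is an $a$--boundary compressing disc, and then (D1) forces $\tild{q}(\ob{Q})=0$. In that last subcase $\ob{Q}$ misses the $2$--handle $\beta$ entirely and is an essential sphere or disc properly embedded in $N=M-\inter{W}$ --- routinely incompatible with (A0), (A3), (A4) unless it is a compressing disc for $F=\boundary W$, which forces $N$ to be a handlebody and $M=S^3$, i.e.\ the first alternative of the conjecture, the excluded case. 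This closes the argument in all but one situation.

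The main obstacle, and the case the paper does not settle, is precisely $M=S^3$ with both $M[\alpha]$ and $M[\beta]$ solid tori: then ``$M[\alpha]$ not $\nil$--taut'' is unavailable, and the third bullet of the Main Theorem requires knowing that $\ob{\alpha}$ cannot be isotoped to be monotone in the solid torus $M[\alpha]$ --- that $\ob{\alpha}$ is not a ``braided arc'' --- which need not hold, and swapping the roles of $\alpha$ and $\beta$ does not help if both arcs are braided. Consequently what this approach actually proves is the strengthened statement Theorem \ref{Thm: MSC} (whose hypotheses hold automatically when, for instance, $M$ is $S^3$ or a link exterior and $M-\inter{W}$ is irreducible and boundary-irreducible), with the residual solid-torus case deferred to \cite{T}. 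A secondary technical point in the plan is verifying that non-alignment of $\alpha$ and $\beta$ genuinely yields the annulus-free hypotheses needed by Lemma \ref{Lem: Choosing Sutures}; making that precise requires unwinding the definition of ``aligned'', carried out in Section \ref{RM}.
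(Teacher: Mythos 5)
The statement you are trying to prove is labeled a \emph{conjecture} in the paper, and the paper itself does not prove it. What the paper proves is Theorem \ref{Thm: MSC} (and its unsymmetrized version, Theorem \ref{Thm: MSC Unsymmetrized}), which is a genuinely different statement under genuinely different hypotheses (``licit'' rather than ``admissible''), and even then the author notes that the case where both $M[\alpha]$ and $M[\beta]$ are solid tori is left open, with resolution deferred to \cite{T}. To your credit, you recognize this at the end; your outline is indeed the paper's strategy for Theorem \ref{Thm: MSC Unsymmetrized}. But two of the gaps you wave at are substantive, not bookkeeping, and the way you propose to close them would not work.

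First, the handling of torsion. You claim that non-trivial torsion in $H_1(M[\alpha])$ ``produces a lens space connected summand of $M$, contradicting (A1).'' That inference is false in general: torsion in $H_1(M[\alpha])$ does not yield a lens space summand of $M$, or even of $M[\alpha]$. The paper is explicit that this is precisely why it replaces (A0), (A1) by (L0), (L1): conditions (L0) ($H_2(M)=0$) and (L1) ($H_1(M)$ torsion-free) are used in a Mayer--Vietoris argument at the start of the proof of Theorem \ref{Thm: MSC Unsymmetrized} to show $H_1(M[\alpha])$ is torsion-free \emph{before} the Main Theorem is applied, so that the torsion alternative simply cannot occur. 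Under (A0), (A1) alone this argument is unavailable, and you have no replacement for it. The remark after Theorem \ref{Thm: MSC Unsymmetrized} notes that (L0) can be weakened to $L_\alpha, L_\beta$ null-homologous, but still not to (A0), (A1).

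Second, the role of alignment. You propose to feed ``non-alignment of $\alpha,\beta$'' into Lemma \ref{Lem: Choosing Sutures} to obtain the annulus-free hypotheses. The paper does not do this and could not easily do so: the definition of ``aligned'' is never given (the author explicitly declines to define it), and the proof of Theorem \ref{Thm: MSC Unsymmetrized} uses $\Delta>0$ (i.e.\ $\alpha,\beta$ not isotopic to disjoint discs) together with the licit condition (L2) to satisfy the hypotheses of Lemma \ref{Lem: Choosing Sutures} --- alignment plays no role. Replacing (A2) by (L2) is again a real change in hypotheses, not a restatement: the paper even points out that (L2) is neither stronger nor weaker than (A2). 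So your proposal proves (modulo the points above) a cousin of the conjecture, namely Theorem \ref{Thm: MSC}, under the licit hypotheses; it does not and cannot, with this machinery, prove the admissible-hypothesis conjecture as stated, and the obstruction is not only the solid-torus case you name but also the absence of any route from admissibility to torsion-freeness of $H_1(M[\alpha])$.
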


The definition of ``aligned'' is rather complicated and is not needed for what follows, so I will not define it here. I will only remark that it is a notion which is independent of the embedding of $W$ in $M$.

Scharlemann proved the following for admissible pairs $(M,W)$:
\begin{theorem*}[Scharlemann]
\begin{itemize}
\item[]
\item If $\boundary W$ compresses in $N$ then the conjecture is true.
\item If $\Delta \leq 4$ then the conjecture is true.
\item If $\alpha$ is separating and $M$ contains no summand which is a non-trivial rational homology sphere then one of $M[\alpha]$ and $M[\beta]$ is irreducible and boundary-irreducible.
\item If both $\alpha$ and $\beta$ are separating then the conjecture is true. If, in addition, $\Delta \geq 6$, then one of $M[\alpha]$ and $M[\beta]$ is irreducible and boundary-irreducible.
\end{itemize}
\end{theorem*}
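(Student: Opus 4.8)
The plan is to run the sutured\hyph manifold machine assembled in the previous sections symmetrically in $\alpha$ and $\beta$, and then to observe that each extra hypothesis precisely excludes the one configuration the machine cannot reach. Assume $(M,W)$ is admissible and the Conjecture fails, so $M$ is not $S^3$ with $W$ unknotted, $\alpha$ and $\beta$ are not aligned, and \emph{both} $M[\alpha]$ and $M[\beta]$ are reducible or boundary\hyph reducible. The first bullet I would dispose of separately as the degenerate case: if $\boundary W$ compresses in $N$, then innermost\hyph disc/outermost\hyph arc surgery on a compressing disc for $F=\boundary W$, together with (A0)--(A2), shows directly that either one of $M[\alpha]$, $M[\beta]$ is irreducible and boundary\hyph irreducible or $N$ is a handlebody (so $M=S^3$). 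For the other three bullets I may therefore assume $F$ is incompressible in $N$, and recall that (A3)--(A4) already give that $N$ is irreducible and $\boundary M$ is incompressible in $N$.

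With this in hand I set up the input to the Main Theorem. Verifying the hypotheses of Lemma \ref{Lem: Choosing Sutures} (no forbidden essential annuli or thrice\hyph punctured spheres and $F-(\gamma\cup a)$ incompressible, using admissibility), pick $\gamma=\hat\gamma\cup\tild\gamma$ so that $(N,\gamma\cup a)$ is $\nil$--taut and $(N[a],\gamma)$ is $\ob{\alpha}$--taut, with $\hat\gamma$ meridional and disjoint from the meridional arcs of $b-a$; the estimates opening Section \ref{RM} then yield $K(\ob{Q})\ge 0$ for every suitably embedded $\ob{Q}$ whose boundary avoids $\gamma\cap\boundary M$. Next, from a reducing sphere or a boundary\hyph reducing disc $\ob{R}\subset M[\beta]$, Theorem \ref{Thm: Constructing Q}(I) together with Corollary \ref{Cor: Constructing Q} produces a suitably embedded essential sphere or disc $\ob{Q}\subset M[\beta]$ with $Q=\ob{Q}\cap N$ incompressible. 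Since $M$ has no lens space summand (A1), (D3) kills $a$--torsion $2g$--gons with $g\ge 2$; since $\Delta,\Delta^*\ge 2$ every boundary curve of $Q$ meets $a$, so $\tild{q}\ne 0$ (otherwise $Q$ would cap off to an essential sphere or disc inside the irreducible $N$ with $\boundary M$ incompressible), and then (D1) kills $a$--torsion $2$--gons. Hence $Q$ has no $a$--torsion $2g$--gon and no disc or sphere component disjoint from $\gamma\cup a$.

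Now feed $Q$ to the Main Theorem on $(N[a],\gamma)$. A standard argument shows that boundary\hyph reducibility of $M[\alpha]$ implies one of: $M[\alpha]$ is reducible, a non\hyph$\nil$--taut conditioned $\ob{\alpha}$--taut surface exists, or $M[\alpha]$ is a solid torus; so unless $M[\alpha]$ is a solid torus whose cocore $\ob{\alpha}$ is isotopic to a braided arc --- the lone case the Main Theorem cannot exploit --- one of the three trigger hypotheses is met, and the same holds with $\alpha$ and $\beta$ interchanged. Whenever the machine applies, its first conclusion is excluded by the construction of $Q$ and its second ($H_1(N[a])$ has torsion) by the homology hypothesis (automatic when $M=S^3$, supplied by the absence of a non\hyph trivial rational homology sphere summand in general), so $-2\chi(\ob{Q})\ge K(\ob{Q})\ge 0$ --- absurd, since $\ob{Q}$ is a sphere or a disc. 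Consequently, if the Conjecture fails, both $M[\alpha]$ and $M[\beta]$ are solid tori with braided cocores, which forces both $a$ and $b$ to be non\hyph separating. This is exactly the case this paper leaves open, and each extra hypothesis destroys it: if $\alpha$ is separating then $\boundary M[\alpha]$ has at least two torus components, so $M[\alpha]$ is not a solid torus --- giving bullet (c), the rational\hyph homology\hyph sphere hypothesis furnishing the torsion\hyph freeness the machine needs; if both $\alpha$ and $\beta$ are separating the same observation kills the residual case outright --- giving bullet (d); and for $\Delta\le 4$ one invokes Scharlemann's combinatorial analysis of small intersection number, which shows that a braided cocore in that range forces $\alpha$ and $\beta$ to be aligned, contradicting our standing assumption --- giving bullet (b). The strengthened conclusions in (c) and (d) for $\Delta\ge 6$ come from the intersection bound, which both guarantees the sutures of Lemma \ref{Lem: Choosing Sutures} exist and rules out the leftover escapes (unknotted $W$, aligned discs).

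The hardest point is the honest bookkeeping for \emph{boundary}\hyph reducibility: unlike a reducing sphere, a boundary\hyph reducing disc for $M[\alpha]$ does not immediately say ``$N[a]$ is not $\nil$--taut,'' so one must either manufacture a non\hyph$\nil$--taut conditioned $\ob{\alpha}$--taut surface from it or carry out the dichotomy above --- and its terminal branch, a solid torus with braided cocore, is precisely the case the Main Theorem cannot reach (it is closed only in \cite{T}), so the whole argument genuinely rests on the extra hypotheses of (b)--(d) excluding that branch. A secondary but real technical burden is the verification of the hypotheses of Lemma \ref{Lem: Choosing Sutures}, which is where admissibility and ``$\boundary W$ incompressible in $N$'' are actually consumed.
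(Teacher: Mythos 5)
This statement is \emph{quoted} from Scharlemann's paper \cite{S5}; the current paper does not prove it, but cites it as background before proving the Modified Scharlemann Conjecture (Theorem~\ref{Thm: MSC}) under the stronger ``licit'' hypotheses. Scharlemann's own proof in \cite{S5} is combinatorial --- a direct analysis of how reducing spheres and $\boundary$-reducing discs in $M[\alpha]$ and $M[\beta]$ intersect each other --- and does not use the sutured manifold machine at all. Your attempt to re-derive the statement from this paper's Main Theorem is therefore a genuinely different route, and an instructive one, but it does not actually close.

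The decisive gap is homological. The Main Theorem always leaves the escape hatch ``$H_1(N[a])$ contains non-trivial torsion,'' and to exclude it the paper assumes $(M,W)$ is \emph{licit}: $H_2(M)=0$ and $H_1(M)$ torsion-free, which the paper then pushes through a Mayer--Vietoris argument to conclude $H_1(M[\alpha])$ is torsion-free. The paper is explicit that (L0)--(L1) are \emph{strictly stronger} than (A0)--(A1), and that (L2) is incomparable with (A2). Scharlemann's theorem above is stated for merely \emph{admissible} pairs; bullets (a), (b), (d) carry no homological hypothesis at all, and bullet (c)'s ``no nontrivial rational homology sphere summand'' is not the same as $H_1$ torsion-free and does not feed directly into the Mayer--Vietoris calculation. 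So your invocation of the Main Theorem to discharge the torsion alternative is not licensed by the hypotheses of the statement you set out to prove; it silently upgrades ``admissible'' to something close to ``licit.'' This is precisely why the paper states and proves a \emph{Modified} Scharlemann Conjecture instead of re-deriving Scharlemann's theorem.

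A secondary but real problem: for bullet (b) you write that ``one invokes Scharlemann's combinatorial analysis of small intersection number,'' which is circular --- that combinatorial analysis \emph{is} Scharlemann's proof of that bullet; there is nothing in this paper's machinery that reproduces it, and indeed the Main Theorem gives no leverage when $\Delta$ is small since the inequality $-2\chi(\ob{Q})\geq K(\ob{Q})$ is easily satisfiable for small $\Delta$. Similarly, your handling of the solid-torus escape in bullets (c) and (d) uses that a separating $\alpha$ forces $\boundary_0 M[\alpha]$ to have two torus components, but this only prevents $M[\alpha]$ from being a solid torus when $\boundary M = \nil$; in general $M$ is allowed boundary, and you still need to rule out the case where the \emph{other} manifold $M[\beta]$ is a solid torus with braided cocore, which your symmetric set-up does not automatically do once only one of the two meridians is separating.

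In short: your outline faithfully reconstructs the shape of the \emph{paper's} Theorem~\ref{Thm: MSC} argument, but applied to the wrong hypotheses. The statement in question is Scharlemann's earlier, combinatorially-proved result, and is not a corollary of the sutured manifold machinery here under the admissibility hypotheses as stated.
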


With a slight variation on the notion of ``admissible'', Scharlemann's Conjecture can now be completed for a large class of manifolds.

Define the pair $(M,W)$ to be \defn{licit} if the following hold:
\begin{itemize}
\item[(L0)] $H_2(M) = 0$.
\item[(L1)] $H_1(M)$ is torsion-free.
\item[(L2)] No curve on a non-torus component of $\boundary M$ which compresses in $M$ bounds an essential annulus in $N$ with a meridional curve of $\boundary W$ (that is, a curve on $\boundary W$ which bounds a disc in $W$).
\item[(L3)] $N$ is irreducible
\item[(L4)] $\boundary M$ is incompressible in $N$.
\end{itemize}

The major improvement provided by the next theorem, is that the case of non-separating meridians can be effectively dealt with. The theorem nearly completes Scharlemann's conjecture for pairs $(M,W)$ which are both licit and admissible. The one major aspect of Scharlemann's conjecture which is not covered by this theorem is the question of whether or not both of $M[\alpha]$ and $M[\beta]$ can be solid tori. In \cite{T}, this case is resolved.

\begin{theorem}[Modified Scharlemann Conjecture]\label{Thm: MSC}
Suppose that $(M,W)$ is licit and that $\alpha$ and $\beta$ are two essential discs in $W$. Suppose that $\boundary W$ is incompressible in $N$. Then either $\alpha$ and $\beta$ can be isotoped to be disjoint or all of the following hold:
\begin{itemize}
\item One of $M[\alpha]$ or $M[\beta]$ is irreducible
\item If one of $M[\alpha]$ or $M[\beta]$ is reducible then no curve on $\boundary M$ compresses in the other.
\item No curve on $\boundary M$ compresses in both $M[\alpha]$ and $M[\beta]$.
\item If one of $M[\alpha]$ or $M[\beta]$ is a solid torus, then the other is not reducible.
\end{itemize}
\end{theorem}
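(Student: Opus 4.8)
The plan is to argue by contradiction. Assume that $\alpha$ and $\beta$ cannot be isotoped to be disjoint and that one of the four conclusions fails. In each failing case I will exhibit, after possibly interchanging the roles of $\alpha$ and $\beta$, an essential sphere or disc $\ob{R}\subset M[\beta]$ together with the fact that $M[\alpha]$ is \emph{not} $\nil$--taut for any choice of sutures. Indeed, if one of $M[\alpha]$, $M[\beta]$ is reducible then it is not $\nil$--taut and it contains an essential sphere; if one of them is a solid torus (the failure of the fourth conclusion), relabel so that the reducible manifold is $M[\alpha]$ and take $\ob{R}$ to be a meridian disc of the solid torus $M[\beta]$; and if a curve $c'$ on $\boundary M$ compresses in $M[\alpha]$ (the failures of the second and third conclusions), then $c'$ also bounds a disc in $M\supset M[\alpha]$, the sutures $\tild{\gamma}$ on $\boundary M=\boundary_1 M[\alpha]$ may be chosen disjoint from $c'$ by Lemma \ref{Lem: Choosing Sutures} --- its hypothesis on essential annuli being supplied exactly by (L2) together with the incompressibility of $\boundary W$ in $N$ --- and then $c'$ is an essential curve of $R(\gamma)$ or of a torus of $T(\gamma)$ that bounds a disc in $M[\alpha]$, so $(M[\alpha],\gamma)$ is not $\nil$--taut.

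Next I would feed $\ob{R}$ into Theorem \ref{Thm: Constructing Q}, invoking alternative (I) because $\ob{R}$ is a union of essential spheres and discs, and then Corollary \ref{Cor: Constructing Q}, to obtain a connected essential surface $\ob{Q}\subset M[\beta]$ that is a single sphere or disc and satisfies (D0), (D1), (D3), (D4), (B2), (B3), with $\tild{q}(\ob{Q})\leq\tild{q}(\ob{R})$. I would then fix $\gamma$ on $\boundary M[\alpha]$ as in Section \ref{Sutures}: the curves $\hat{\gamma}$ chosen there, and $\tild{\gamma}$ produced by Lemma \ref{Lem: Choosing Sutures} with its collection $c$ taken to be $\boundary\ob{Q}\cap\boundary_1 M[\alpha]$. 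Every curve of $c$ bounds a disc in $M[\beta]\subset M$, hence compresses in $M$, so (L2) once more supplies the needed annulus hypothesis and $(M[\alpha],\gamma)$ becomes an $\ob{\alpha}$--taut sutured manifold. The surface $Q=\ob{Q}\cap N$ is then a legitimate parameterizing surface: it is incompressible by (D0), and it has no disc or sphere component disjoint from $\gamma\cup a$ because $\tild{q}(\ob{Q})\neq 0$. If $\tild{q}(\ob{Q})=0$ then $Q=\ob{Q}$ is a closed sphere or a disc lying inside $N$, hence inessential in $M[\beta]$ by the irreducibility of $N$ (L3) and the incompressibility of $\boundary M$ (L4) and of $\boundary W$ in $N$, contradicting the construction.

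Now I would apply the Main Theorem to $(M[\alpha],\gamma)$ with parameterizing surface $Q$. Since $M[\alpha]$ is not $\nil$--taut, one of the three listed alternatives holds, and each is eliminated. Non-trivial torsion in $H_1(M[\alpha])$ is impossible: by (L0) and excision, $H_2(M,M[\alpha])\cong H_2(\eta(L_\alpha),\boundary\eta(L_\alpha))$ is free, so the exact sequence of the pair $(M,M[\alpha])$ exhibits $H_1(M[\alpha])$ as an extension of a subgroup of the torsion-free group $H_1(M)$ (using (L1)) by a free abelian group. An $a$--torsion $2g$--gon for $Q$ is impossible: a $2$--gon is an $a$--boundary compressing disc, ruled out by (D1) since $\tild{q}(\ob{Q})\neq 0$; and a $2g$--gon with $g\geq 2$ is ruled out by (D3), because $M[\beta]$ has no lens space connected summand (nor an $S^1\times S^2$ summand), its first homology being torsion-free and $H_2(M[\beta])=0$ by the same computation applied to $\beta$. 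Therefore the remaining alternative $-2\chi(\ob{Q})\geq K(\ob{Q})$ must hold. But $K(\ob{Q})\geq 0$ by the computation at the opening of Section \ref{RM}, while $\ob{Q}$ is a single sphere or disc, so $-2\chi(\ob{Q})$ equals $-4$ or $-2$; this contradiction completes the proof.

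I expect the main obstacle to be not this final numerical collision but the preparatory bookkeeping: arranging, for each of the four failures, both an essential sphere or disc in one of the two manifolds \emph{and} the non-$\nil$--tautness of the other, and verifying that $\tild{\gamma}$ can be taken disjoint from the relevant compressing curves while keeping $(M[\alpha],\gamma)$ an $\ob{\alpha}$--taut sutured manifold. That verification is exactly where the hypotheses (L2), (L3), (L4) and the incompressibility of $\boundary W$ in $N$ are consumed, through the annulus- and thrice-punctured-sphere-free conditions of Lemma \ref{Lem: Choosing Sutures}. A further subtlety is that Theorem \ref{Thm: Constructing Q} may alter the topological type of the input surface (for example an essential sphere can re-emerge as a disc with boundary on $\boundary M$), which is why the suture choice must be made after, and in terms of, the surface $\ob{Q}$ rather than $\ob{R}$.
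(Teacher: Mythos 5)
Your argument is correct and matches the paper's, which proves an unsymmetrized Theorem \ref{Thm: MSC Unsymmetrized} and then symmetrizes: exhibit an essential sphere or disc in $M[\beta]$ together with a failure of $\nil$--tautness for $(M[\alpha],\gamma)$, apply Corollary \ref{Cor: Constructing Q}, choose sutures via Lemma \ref{Lem: Choosing Sutures} so that $(M[\alpha],\gamma)$ is $\ob{\alpha}$--taut, rule out torsion (via the same homological computation) and $a$--torsion $2g$--gons (via (D1), (D3)), and collide the Main Theorem's $-2\chi(\ob{Q})\geq K(\ob{Q})$ with the Section \ref{RM} computation $K(\ob{Q})\geq 0$. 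The only variation --- choosing $c=\boundary\ob{Q}\cap\boundary M$ after obtaining $\ob{Q}$ rather than fixing $c$ from $\boundary\ob{R}$ beforehand and using conclusion (B2) to control $\boundary\ob{Q}\cap\boundary M$ --- is harmless, though your stated reason for it is a non-issue: the construction in Theorem \ref{Thm: Constructing Q} only introduces new boundary on $\boundary_0 N[b]\subset\boundary W$ (see Case 1.4), never on $\boundary M$, so an essential sphere cannot re-emerge as a disc meeting $\boundary M$.
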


Conditions (L0) and (L1) are stronger than Conditions (A0) and (A1) but are used to guarantee that $H_1(M[\alpha])$ and $H_1(M[\beta])$ are torsion-free; this is required for the application of the main theorem. Condition (L2) is neither stronger nor weaker than Condition (A2) since we allow multiple curves on $\boundary M$ to compress in $M$ but forbid the existence of certain annuli. To show that some condition like (A2) was required, Scharlemann points out the following example:
\begin{example}
Let $M$ be a genus 2--handlebody and let $W \subset M$ so that $M - \inter{W}$ is a collar on $\boundary W$. (That is, $M$ is a regular neighborhood of $W$.) Then conditions (A0), (A1), (A3), (A4), (L0), (L1), (L3), and (L4) are all satisfied. But given any essential disc $\alpha \subset W$, $M[\alpha]$ is obviously boundary-reducible. Both (A2) and (L2) rule out this example.
\end{example}

The Modified Scharlemann Conjecture is simply a ``symmetrized'' version of the following theorem. In this theorem, the incompressiblity assumption has been weakened for later applications.

\begin{theorem}\label{Thm: MSC Unsymmetrized}
Suppose that $(M,W)$ is licit and that $\alpha$ and $\beta$ are two essential discs, isotoped to intersect minimally, with $\Delta > 0$. Suppose that $M[\beta]$ is reducible or boundary-reducible. If $\alpha$ is separating, assume that $\boundary W - a$ is incompressible in $N$. If $\beta$ is non-separating, assume that there is no essential disc in $M[\beta]$ which is disjoint from both $\ob{\beta}$ and $a$. Then the following hold:
\begin{itemize}
\item $M[\alpha]$ is irreducible
\item If $M[\beta]$ is reducible, no essential curve in $\boundary M$ compresses in $M[\alpha]$
\item If $M[\beta]$ is boundary-reducible, no essential curve of $\boundary M$ compresses in both $M[\beta]$ and $\boundary M[\beta]$.
\end{itemize}
\end{theorem}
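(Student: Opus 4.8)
The plan is to prove the contrapositive: assuming that one of the three conclusions fails, I will produce a surface to which the Main Theorem applies and derive a numerical contradiction. First use the hypothesis that $M[\beta]=N[b]$ is reducible or boundary-reducible to fix an essential surface $\ob{R}\subset N[b]$ --- an essential $2$--sphere if $M[\beta]$ is reducible, and otherwise an essential disc, chosen when possible so that $\boundary\ob{R}$ is a curve that is supposed to compress in both $M[\beta]$ and $M[\alpha]$. Apply Theorem \ref{Thm: Constructing Q} in case (I), followed by Corollary \ref{Cor: Constructing Q}, to replace $\ob{R}$ by a connected essential sphere or disc $\ob{Q}\subset N[b]$ with $\tild{q}(\ob{Q})\leq\tild{q}(\ob{R})$ such that $Q=\ob{Q}\cap N$ is incompressible, has no $a$--boundary compressing disc unless $\tild{q}(\ob{Q})=0$, and --- invoking (D3) together with the observation that (L0) and (L1) make $H_1(N[a])$ and $H_1(N[b])$ torsion-free and hence rule out lens space summands --- has no $a$--torsion $2g$--gon. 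If $\tild{q}(\ob{Q})=0$ then $\ob{Q}$ lies in $N$, contradicting irreducibility of $N$ or incompressibility of $\boundary W$ and $\boundary M$ in $N$; so assume $\tild{q}(\ob{Q})\neq0$. Since $\ob{Q}$ is then connected with a boundary curve parallel to $b$ or $b^*$, and $\Delta>0$ while $\Delta^*\geq2$ when $q^*\neq0$, the surface $Q$ meets $a$, so $Q$ has no disc or sphere component disjoint from $\gamma\cup a$.

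Next I would build the sutures on $\boundary M[\alpha]=\boundary N[a]$: choose $\hat\gamma$ on $\boundary_0 N[a]$ as in Section \ref{Sutures}, and apply Lemma \ref{Lem: Choosing Sutures} to choose $\tild\gamma$ on the non-torus components of $\boundary M$. Its hypotheses (incompressibility of $F-(\gamma\cup a)$ and the absence of the relevant essential annuli and thrice-punctured spheres) follow from licitness, since such annuli or punctured spheres capped by meridian discs of $W$ would yield an essential sphere in $N$ or a compression of $\boundary W$ or $\boundary M$ in $N$, against (L2)--(L4) and the standing assumptions. Moreover I would invoke the ``furthermore'' clause of Lemma \ref{Lem: Choosing Sutures} with $c$ the collection consisting of $\boundary\ob{Q}\cap\boundary M$ and, in the failure cases for the second and third conclusions, the curve on $\boundary M$ compressing in $M[\alpha]$; the slope estimate (B2) of Theorem \ref{Thm: Constructing Q} forces $|c|\leq1$, so only the single-curve hypotheses of the lemma are needed. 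This produces $\gamma=\hat\gamma\cup\tild\gamma$ with $(N,\gamma\cup a)$ $\nil$--taut, $(N[a],\gamma)$ $\ob\alpha$--taut, and $\gamma$ disjoint both from $\boundary\ob{Q}$ and from the distinguished compressing curve; disjointness from $\boundary\ob{Q}$ lets the computation at the start of Section \ref{RM} apply, giving $K(\ob{Q})\geq0$.

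Now apply the Main Theorem to $(N[a],\gamma)$ with parameterizing surface $Q$. Its standing hypotheses hold by the previous paragraphs, and the required trigger comes from the failed conclusion: if $M[\alpha]$ is reducible then $N[a]$ is not $\nil$--taut, and if a curve of $\boundary M$ disjoint from $\gamma$ compresses in $M[\alpha]$ then a component of $T(\gamma)$ or of $R_\pm(\gamma)$ is compressible, so again $(N[a],\gamma)$ is not $\nil$--taut. Hence one of the three conclusions of the Main Theorem holds; the $a$--torsion $2g$--gon alternative is excluded by construction of $\ob{Q}$, the torsion alternative by (L0) and (L1), so $-2\chi(\ob{Q})\geq K(\ob{Q})\geq0$. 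But $\ob{Q}$ is a sphere or disc, so $-2\chi(\ob{Q})$ is $-4$ or $-2$, hence negative --- a contradiction. Therefore all three conclusions hold.

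The main obstacle is the middle step: producing one suture system $\gamma$ that is simultaneously admissible for Lemma \ref{Lem: Choosing Sutures} (so $(N[a],\gamma)$ is $\ob\alpha$--taut), disjoint from $\boundary\ob{Q}$ (so $K(\ob{Q})\geq0$), and disjoint from the curve witnessing the failed conclusion (so the ``not $\nil$--taut'' trigger is available), while verifying the annulus and thrice-punctured-sphere hypotheses of the lemma from the licitness conditions. Carrying this through the separating/non-separating cases for $\alpha$ and $\beta$, and tracking which optional conclusions of Theorem \ref{Thm: Constructing Q} are used (here (D1) and (D3), not (D2)), is the bulk of the work; the closing Euler characteristic count is then immediate.
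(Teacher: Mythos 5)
Your overall strategy is the same as the paper's: construct a surface $\ob{Q}$ via Theorem \ref{Thm: Constructing Q} and Corollary \ref{Cor: Constructing Q}, choose sutures via Lemma \ref{Lem: Choosing Sutures}, rule out $a$--torsion $2g$--gons and torsion in $H_1$, and read off a contradiction from the Main Theorem and the inequality $K(\ob{Q}) \geq 0$. You phrase it as a contrapositive while the paper argues forward to conclude that $(M[\alpha],\gamma)$ is $\nil$--taut, but these are logically the same use of the Main Theorem. Your explicit observation that the Mayer--Vietoris argument makes $H_1(M[\beta])$ torsion-free by the same token as $H_1(M[\alpha])$, hence rules out lens space summands for the (D3) alternative, is a worthwhile point that the paper leaves implicit.

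However, there is a genuine gap in your handling of the case $\tild{q}(\ob{Q})=0$. You claim that $\tild{q}=0$ forces $\ob{Q} \subset N$ and contradicts ``irreducibility of $N$ or incompressibility of $\boundary W$ and $\boundary M$ in $N$.'' But incompressibility of $\boundary W$ in $N$ is \emph{not} a hypothesis of this theorem (it appears only in Theorem \ref{Thm: MSC}); here we assume only (L4), that $\boundary M$ is incompressible in $N$, together with the partial hypothesis ``$\boundary W - a$ incompressible'' when $\alpha$ separates. If $\ob{Q}$ is an essential disc with boundary on $\boundary_0 M[\beta]$, then $\tild{q}=0$ is entirely possible and gives no contradiction. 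The paper's proof does not exclude this case; instead it invokes the hypothesis that (when $\beta$ is non-separating) no essential disc in $M[\beta]$ is disjoint from both $\ob{\beta}$ and $a$, concluding that $Q = \ob{Q}$ meets $a$, and then notes that since $Q$ is a disc it has no essential arcs, hence no compressing disc, $a$--boundary compressing disc, or $a$--torsion $2g$--gon regardless of $\tild{q}$. The Main Theorem is then applied to this $Q$ exactly as in the $\tild{q}>0$ cases. Your argument needs to be repaired along these lines, since you cannot simply assume $\tild{q}\neq 0$.

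A smaller point: your appeal to (B2) to ``force $|c| \leq 1$'' is muddled, and the inclusion of ``the curve on $\boundary M$ compressing in $M[\alpha]$'' in the collection $c$ for Lemma \ref{Lem: Choosing Sutures} is unnecessary. The paper simply takes $c$ to be a curve $c_\beta$ compressing in $M[\beta]$ (or, failing that, any curve compressing in $M$), obtains $\gamma$ disjoint from $c$, and then uses that $\nil$--tautness of $(M[\alpha],\gamma)$ forces $R_\pm(\gamma)$ to be incompressible in $M[\alpha]$, hence $c \subset R_\pm(\gamma)$ cannot bound a disc there. Since $c$ was arbitrary among the relevant compressing curves, all three bulleted conclusions follow. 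Your version gets to the same place but introduces needless complications in the choice of suture-avoiding curves, with the attendant risk of overloading the $|c|\leq 2$ hypotheses of Lemma \ref{Lem: Choosing Sutures}.
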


\begin{proof}
We begin by showing that $H_1(M[\alpha])$ is torsion-free. Consider $M$ as the union of $V = W - \inter{\eta}(\alpha)$ and $M[\alpha]$. Using assumption (L0) that $H_2(M) = 0$, we see that the Mayer-Vietoris sequence gives the exact sequence:
\[
0 \to H_1(\boundary V) \stackrel{\phi}{\to} H_1(M[\alpha]) \oplus H_1(V) \stackrel{\psi}{\to} H_1(M) \to 0.
\]

Suppose that $x$ is an element of $H_1(M[\alpha])$ and that $n \in \N$ is such that $nx = 0$. Then $n\psi(x,0) = \psi(nx,0) = 0$. Since $H_1(M)$ is torsion-free, $\psi(x,0) = 0$. Thus, by exactness, $(x,0)$ is in the image of $\phi$. Let $y \in H_1(\boundary V)$ be in the preimage of $(x,0)$. Also, $\phi(ny) = n\phi(y) = (nx,0) = (0,0)$. From exactness, we know that $\phi$ is injective. Hence, $ny = 0 \in H_1(\boundary V)$. The boundary of $V$ is a collection of tori and, therefore, $H_1(\boundary V)$ is torsion-free. Consequently, $y = 0$. Therefore, $x = 0$ and $H_1(M[\alpha])$ is torsion-free.

We now proceed with the theorem by choosing appropriate sutures on $\boundary M$. If $\boundary M$ is compressible in $M[\beta]$, let $c_\beta$ be a curve on $\boundary M$ which compresses in $M[\beta]$. If $c_\beta = \nil$, let $c$ be any curve on $\boundary M$ which compresses in $M$, otherwise let $c = c_\beta$.

By Lemma \ref{Lem: Choosing Sutures}, we may choose sutures $\gamma$ on $\boundary M[\alpha]$ so that $\hat{\gamma} = \gamma \cap \boundary_0 M[\alpha]$ is chosen as usual and so that $\gamma \cap c = \nil$ and $(M[\alpha],\gamma)$ is an $\ob{\alpha}$--taut sutured manifold. Let $\ob{R}$ be either an essential sphere, an essential disc with boundary $c_\beta = c$, or an essential disc with boundary on $\boundary_0 M[\beta]$. Let $\ob{Q}$ be the result of applying Corollary \ref{Cor: Constructing Q} to $\ob{R}$. $\ob{Q}$ is an essential sphere, an essential disc with boundary $c_\beta$, or an essential disc with boundary on $\boundary_0 M[\beta]$. 

If $\ob{Q}$ is a sphere or disc with boundary $c_\beta$, then, since $N$ is irreducible and $\boundary M$ is incompressible in $N$, $\tild{q} > 0$. By Corollary \ref{Cor: Constructing Q}, there is no compressing disc, $a$--boundary compressing disc, or $a$--torsion $2g$--gon for $Q = \ob{Q} \cap N$. Suppose, for the moment, that $\ob{Q}$ is a disc with boundary on $\boundary W$. If $\tild{q} > 0$ then $Q$ is not disjoint from $a$. By Corollary \ref{Cor: Constructing Q} there is no compressing disc, $a$--boundary compressing disc or $a$--torsion $2g$--gon for $Q$. If $\tild{q} = 0$ then by hypothesis $Q = \ob{Q}$ is not disjoint from $a$. Since $Q = \ob{Q}$ is a disc, there are no essential arcs in $Q$ and so there is no compressing disc, $a$--boundary compressing disc, or $a$--torsion $2g$--gon in this case either. 

Since, in all cases, $\boundary \ob{Q}$ is disjoint from the sutures on $\boundary M$, $K(\ob{Q}) \geq 0$ as noted in the introduction to this section. Since $\ob{Q}$ is a sphere or disc, we also have $-2\chi(\ob{Q}) < 0$. Hence, by the main theorem $(M[\alpha],\gamma)$ is $\nil$--taut. This implies that $M[\alpha]$ is irreducible and that $R_\pm(\gamma)$ does not compress in $M[\alpha]$. Consequently, $c$ does not compress in $M[\alpha]$.
\end{proof}

\begin{remark}
At the cost of adding hypotheses on the embedding of $W$ in $M$, the conditions for being ``licit'' can be significantly weakened. For example, the hypotheses on the curves $c$, $a$, and $b$ of Lemma \ref{Lem: Choosing Sutures} can be substituted for (L2). An examination of the homology argument at the beginning of the proof shows that (L0) can be be replaced with the assumption that $L_\alpha$ and $L_\beta$ are null-homologous in $M$. 
\end{remark}

\section{Rational Tangle Replacement}\label{RTR}
In this section, we show how the Main Theorem combined with Theorem \ref{Thm: Constructing Q} can be used to give new proofs of several theorems concerning rational tangle replacement. Following \cite{EM2}, we define a few relevant terms.

A \defn{tangle} $(B,\tau)$ is a properly embedded pair of arcs $\tau$ in a 3--ball $B$.  Two tangles $(B,\tau)$ and $(B,\tau')$ are \defn{equivalent} if they are homeomorphic as pairs.  They are \defn{equal} if there is a homeomorphism of pairs which is the identity on $\boundary B$.  The \defn{trivial tangle} is the pair $(D^2 \times I, \{.25,.75\} \times I)$. A \defn{rational tangle} is a tangle equivalent to the trivial tangle.  Each rational tangle $(B,r)$ has a disc $D_r \subset B$ separating the strands of $r$ (each of which is isotopic into $\boundary B$).  The disc $D_r$ is called a \defn{trivializing} disc for $(B,r)$.  The \defn{distance} $d(r,s)$ between two rational tangles $(B,r)$ and $(B,s)$ is simply the minimal intersection number $|D_r \cap D_s|$. We will often write $d(D_r,D_s)$ instead of $d(r,s)$. A \defn{prime} tangle $(B,\tau)$ is one without local knots (i.e. every meridional annulus is boundary-parallel) and where no disc in $B$ separates the strands of $\tau$.

Given a knot $L_\beta \subset M$ and a 3--ball $B'$ intersecting $L_\beta$ in two arcs such that $(B',B' \cap L_\beta) = (B',r_\beta)$ is a rational tangle, to replace $(B',r_\beta)$ with a rational tangle $(B',r_\alpha)$ is to do a \defn{rational tangle replacement} on $L_\beta$. Notice that that $\eta(L_\beta) \cup B'$ is a genus 2 handlebody $W$. The knots or links $L_\beta$ and $L_\alpha$ can be obtained by refilling the meridians $\beta$ and $\alpha$ respectively. If $M = S^3$ then $(B,\tau) = (S^3 - \inter{B}',L_\beta - \inter{B}')$ is a tangle. We assume that no component of $L_\beta$ is disjoint from $B$.

Before stating the applications, we state and prove some lemmas which allow the terminology of tangle sums and rational tangle replacement to be converted into the terminology of boring.

\subsection{Boring and Rational Tangle Replacement}\label{RT}

\begin{lemma}\label{Lem: Compressing Tangles}
Let $(B,\tau)$ be a tangle and $N = B - \inter{\eta}(\tau)$. Suppose that $c$ is an essential curve on $\boundary B - \tau$ which separates $\boundary N$. If $\boundary N - c$ is compressible in $N$ then $c$ compresses in $N$.
\end{lemma}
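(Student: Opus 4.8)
The plan is to exploit the fact that $\boundary N$ is a closed genus two surface and that $c$, which lies on the four--punctured sphere $\boundary B \cap \boundary N$ and is essential there, must separate the four points $\tau \cap \boundary B$ into two pairs. Write the two components of $\boundary N - c$ as $F_1$ and $F_2$. First I would check that, because $c$ separates $\boundary N$, each $F_i$ contains exactly one of the two annuli $\boundary\eta(\tau_i) \cap \boundary N$; thus $F_i$ is a pair of pants two of whose boundary circles have been joined by an annulus, that is, a once--punctured torus. (The Euler characteristic identity $\chi(F_1) + \chi(F_2) = \chi(\boundary N) = -2$, together with the fact that neither $F_i$ can be a disc since each contains a handle, pins this down.) In particular $c$ is essential on $\boundary N$: it bounds a non--disc surface on each side.

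Now suppose $\boundary N - c$ is compressible in $N$. After relabelling, let $D$ be a compressing disc with $\boundary D \subset F_1$, so that $\boundary D$ does not bound a disc in $F_1$. The key observation is that an essential simple closed curve on a once--punctured torus is either non--separating or parallel to the boundary, and I would treat these two cases directly. If $\boundary D$ is parallel to $c$ in $F_1$, let $A \subset F_1$ be the annulus it cobounds with $c$; then $D \cup A$, isotoped slightly into $N$ away from $c$, is an embedded disc in $N$ with boundary $c$. If $\boundary D$ is non--separating in $F_1$, I would compress $F_1$ along $D$: cutting $F_1$ along $\boundary D$ yields a connected surface with $\chi = -1$ and three boundary circles (a thrice--punctured sphere), and capping the two new circles off with parallel copies of $D$ produces a surface with $\chi = 1$ and a single boundary circle, namely $c$ --- a disc. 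Isotoping it into $N$ away from $c$ gives an embedded disc in $N$ with boundary $c$.

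In either case $c$ bounds an embedded disc in $N$, and since $c$ is essential on $\boundary N$ this disc is a genuine compressing disc for $\boundary N$. Hence $c$ compresses in $N$, as claimed. I do not anticipate a serious obstacle: the argument is bookkeeping with the topology of $\boundary N - c$. The only points that need care are verifying that the two cases for $\boundary D$ on the once--punctured torus are exhaustive, and confirming that the surface obtained by compressing $F_1$ is genuinely an embedded disc (rather than accidentally inessential) --- which is exactly why it is worth establishing at the outset that $c$ is essential on $\boundary N$.
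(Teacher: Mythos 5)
Your proof is correct and takes essentially the same route as the paper's: both identify $\boundary N - c$ as two once-punctured tori, treat the two cases for $\boundary D$ (boundary-parallel versus non-separating), and in the latter case manufacture from $D$ an embedded disc in $N$ with boundary parallel to $c$. The only cosmetic difference is in the non-separating case, where the paper bands two parallel copies of $D$ along a loop dual to $\boundary D$ while you compress the entire once-punctured torus along $D$; these are the same maneuver described in different words.
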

\begin{proof}
Let $d$ be an essential curve in $\boundary N - c$ which bounds a disc $D \subset N$. Since $c$ is separating and $\boundary N$ has genus two, $d$ is a curve in a once-punctured torus. Thus, it is either non-separating or parallel to $c$. In the latter case, we are done, so suppose that $d$ is non-separating. Let $D_+$ and $D_-$ be parallel copies of $D$ so that $d$ is contained in an annulus between $\boundary D_+$ and $\boundary D_-$. Use a loop which intersects $d$ exactly once to band together $D_+$ and $D_-$, forming a disc $D'$. The boundary of $D'$ is an essential separating curve in the once-punctured torus. $\boundary D'$ is, therefore, parallel to $c$. Hence, $c$ compresses in $N$.
\end{proof}

\begin{lemma}\label{Lem: Tangle Compressing}
Suppose that $(B,\tau)$ and $(B',r_\alpha)$ are tangles embedded in $S^3$ with $(B',r_\alpha)$ a rational tangle so that $\boundary B = \boundary B'$ and $\boundary \tau = \boundary r_\alpha$. Suppose that $(B',r_\beta)$ is rational tangle of distance at least one from $(B',r_\alpha)$. Define the sutures $\gamma \cup a$ on $\boundary N$ as before. If
\begin{itemize}
\item $\alpha$ is non-separating in the handlebody $W = B' \cup \eta(\tau)$, or
\item if $(B,\tau)$ is a prime tangle, or 
\item if $(B,\tau)$ is a rational tangle and $\boundary \alpha$ does not bound a trivializing disc for $(B,\tau)$, or
\item if $\boundary \alpha$ does not compress in $(B,\tau)$
\end{itemize}
then $\boundary W - (\gamma \cup a)$ is incompressible in $N$. Consequently, $(N,\gamma \cup a)$ is $\nil$--taut and $(N[a],\gamma)$ is $\ob{\alpha}$--taut.
\end{lemma}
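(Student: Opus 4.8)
The plan is to derive the ``Consequently'' clause from Lemma~\ref{Lem: Choosing Sutures}, for which the only substantial point is that $\boundary W - (\gamma \cup a) = F - (\gamma\cup a)$ is incompressible in $N$. The remaining hypotheses of that lemma come for free: $N = B - \inter{\eta}(\tau)$ is irreducible, since any sphere in $N\subset S^3$ bounds a ball in $S^3$ on the side disjoint from the connected handlebody $W$; and because $\boundary N=\boundary W=F$ we have $\boundary_1 N[a]=T(\gamma)=\nil$, so $\tild\gamma=\nil$, $\gamma=\hat\gamma$, and the boundary side conditions (no essential annulus on $\hat\gamma\cup a$, incompressibility of $\boundary N-F$) are vacuous. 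Throughout I use the standard dictionary for rational tangle replacement: $\alpha$ is the trivializing disc $D_{r_\alpha}$ of $(B',r_\alpha)$, so $a=\boundary\alpha$ lies on $\boundary B'=\boundary B$, is disjoint from the four points $\boundary\tau$, and separates them into the two pairs joined by the strands of $r_\alpha$; moreover $W-\inter{\eta}(\alpha)$ is connected (i.e.\ $\alpha$ is non-separating) exactly when the tubes $\eta(\tau)$ join the two balls of $B'-\inter{\eta}(D_{r_\alpha})$, that is, when the strands of $\tau$ join $\boundary\tau$ transversely to the splitting induced by $a$.

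Suppose first that $\alpha$ is non-separating. Then $W-\inter{\eta}(\alpha)$ is a solid torus with core $L_\alpha$, the curves $\hat\gamma$ are two parallel copies $\hat\gamma_1,\hat\gamma_2$ of a meridian of this solid torus separating the two $\alpha$-scars on its boundary (hence the two copies of $a$), and a short computation shows that $F-(\gamma\cup a)=F-(a\cup\hat\gamma_1\cup\hat\gamma_2)$ is a disjoint union of two pairs of pants, each bounded by one copy of $a$ and one copy of each $\hat\gamma_i$. Since every essential simple closed curve in a pair of pants is isotopic in it to a boundary component, a compressing disc for $F-(\gamma\cup a)$ in $N$ would yield a disc $D_N\subset N$ with $\boundary D_N$ isotopic in $F$ to $a$ or to some $\hat\gamma_i$, so it suffices to exclude both. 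If $\hat\gamma_i$ bounded such a $D_N$, then capping $\hat\gamma_i$ off on the $W$ side with the meridian disc of $W-\inter{\eta}(\alpha)$ it bounds (pushed into $W$ off the $\alpha$-scars) would produce an embedded $2$-sphere in $S^3$ meeting the closed curve $L_\alpha$ in a single point, which is absurd. If $a$ bounded such a $D_N$, then $D_N$ would be a disc in $B$ disjoint from $\tau$ with $\boundary D_N$ separating $\boundary\tau$ as $r_\alpha$ does; but since $\alpha$ is non-separating the strands of $\tau$ join $\boundary\tau$ transversely to that splitting, forcing a strand of $\tau$ to cross $D_N$, a contradiction. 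Hence $F-(\gamma\cup a)$ is incompressible in $N$ in this case.

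Suppose instead that $\alpha$ is separating, so the hypothesis in force is one of the last three. Now $\gamma=\hat\gamma=\nil$ and $F-(\gamma\cup a)=F-a$; moreover $a$ separates $\boundary N$ and is essential in the four-punctured sphere $\boundary B-\tau$ (it separates $\boundary\tau$ into two pairs). By Lemma~\ref{Lem: Compressing Tangles} it therefore suffices to show that $a$ does not compress in $N$. If it did, a compressing disc would again be a disc $D_N\subset B$ disjoint from $\tau$ with $\boundary D_N=a$; since $\alpha$ is separating, the two strands of $\tau$ are joined according to the splitting of $\boundary\tau$ by $a$, so each strand of $\tau$ lies in one of the two balls into which $D_N$ divides $B$, i.e.\ $D_N$ separates the strands of $\tau$. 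This is impossible when $(B,\tau)$ is prime; when $(B,\tau)$ is rational it exhibits $D_N$ as a trivializing disc with $\boundary D_N=a$, contradicting the third hypothesis; and it flatly contradicts the hypothesis that $\boundary\alpha$ does not compress in $(B,\tau)$. So $a$ does not compress in $N$, and $F-a$ is incompressible in $N$.

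In either case $F-(\gamma\cup a)$ is incompressible in $N$, so Lemma~\ref{Lem: Choosing Sutures} applies and gives that $(N,\gamma\cup a)$ is $\nil$--taut and $(N[a],\gamma)$ is $\ob{\alpha}$--taut, completing the proof. I expect the tangle bookkeeping to be the main obstacle: verifying the equivalence between $\alpha$ being non-separating and the strands of $\tau$ meeting $\boundary\tau$ transversely to the splitting of $a$, and the pair-of-pants description of $F-(\gamma\cup a)$ when $\alpha$ is non-separating, are exactly what make the two endpoint-counting arguments run.
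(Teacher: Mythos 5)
Your proof is correct and follows essentially the same route as the paper's: split into the non-separating and separating cases for $\alpha$, in the former show any compressing disc would produce a non-separating sphere in $S^3$, and in the latter reduce to ``$a$ does not compress in $(B,\tau)$'' and invoke Lemma~\ref{Lem: Compressing Tangles}, then finish with Lemma~\ref{Lem: Choosing Sutures}. The one small divergence is in the non-separating subcase when $\boundary D_N$ is parallel to $a$: the paper observes that $a$ itself is a meridian of the solid torus $W - \inter{\eta}(\alpha)$, so this subcase is handled by the same meridional-sphere argument as the $\hat\gamma_i$ subcase, whereas you instead invoke the fact that the strands of $\tau$ cross the splitting defined by $a$; both are valid, and your treatment has the virtue of spelling out several steps the paper leaves implicit (the pair-of-pants decomposition of $F - (\gamma \cup a)$ and the argument that a compressing disc for $a$ must separate the strands of $\tau$).
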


\begin{proof}
If $\alpha$ is non-separating then any compressing disc for $\boundary W - (\gamma \cup \boundary \alpha)$ would have meridional boundary, implying that $S^3$ had a non-separating 2--sphere. Thus, we may suppose that $\alpha$ is separating. If $(B,\tau)$ is prime, there is no disc separating the strands of $\tau$. Similarly, if $(B,\tau)$ is a rational tangle but $a$ does not bound a trivializing disc then $a$ does not compress in $(B,\tau)$. Thus, for the remaining three hypotheses, we may assume that $a$ does not compress in $(B,\tau)$. By Lemma \ref{Lem: Compressing Tangles}, $\boundary N - a$ is incompressible in $N$, as desired. By Lemma \ref{Lem: Choosing Sutures}, $(N,\gamma \cup a)$ is taut and $(N[a],\gamma)$ is $\ob{\alpha}$--taut.
\end{proof}

One pleasant aspect of working with rational tangle replacements is that we can make explicit calculations of $K(\ob{Q})$. Here are two lemmas which we jointly call the \defn{Tangle Calculations}.

\begin{TangleCalcI}[$\beta$ separating]\label{Lem: Tangle Calcs, Sep}
Suppose that $L_\beta$ is a link obtained from $L_\alpha$ by a rational tangle replacement of distance $d$ using $W$. Let $\ob{Q}$ be a suitably embedded surface in the exterior $S^3[\beta]$ of $L_\beta$. Let $\boundary_1 \ob{Q}$ be the components of $\boundary \ob{Q}$ on one component of $\boundary S^3[\beta]$ and $\boundary_2 \ob{Q}$ be the components on the other. Let $n_i$ be the minimum number of times a component of $\boundary_i \ob{Q}$ intersects a meridian of $\boundary S^3[\beta]$.
\begin{itemize}
\item If $L_\alpha$ is a link then
\[
K(\ob{Q}) \geq 2q(d - 1) + d(|\boundary_1 \ob{Q}|n_1 + |\boundary_2 \ob{Q}|n_2).
\]

\item If $L_\alpha$ is a knot then
\[K(\ob{Q}) \geq 2q(d - 1) +  (d-1)(|\boundary_1 \ob{Q}|n_1 + |\boundary_2 \ob{Q}|n_2).
\]
\end{itemize}
\end{TangleCalcI}
\begin{proof}
Since $L_\beta$ is a link, $\beta$ is separating. Thus, $q^* = 0$. Since $a$ and $b$ are contained in $\boundary B' = \boundary B$ every arc of $b - a$ is an meridional arc. Hence, $\nu = 0$. By definition $2d = \Delta$. 

Let $T$ be a component of $\boundary S^3[\beta]$. Without loss of generality, suppose that the components of $\boundary \ob{Q}$ on $T$ are $\boundary_1 \ob{Q}$. Since every arc of $a - b$ is meridional, there exist $d$ meridional arcs on each component of $\boundary S^3[\beta]$. Thus, each component of $\boundary_1 \ob{Q}$ intersects $a$ at least $dn_1$ times. Each component of $\boundary_2 \ob{Q}$ intersects $a$ at least $dn_2$ times. Consequently, $|\boundary_1 \ob{Q} \cap a| \geq |\boundary_1 \ob{Q}|n_1d$. Similarly, $|\boundary_2 \ob{Q} \cap a| \geq |\boundary_2 \ob{Q}|n_2d$. Hence,
\[
\Delta_\boundary \geq d(|\boundary_1 \ob{Q}|n_1 + |\boundary_2 \ob{Q}|n_2).
\]

If $\alpha$ is non-separating, the curves $\gamma$ are also meridian curves of $L_\beta$. Thus, $\gamma$ is intersected $n_i$ times by each component of $\boundary_i \ob{Q}$. Hence, if $L_\alpha$ is a knot,
\[
\nu_\boundary = |\boundary_1 \ob{Q}|n_1 + |\boundary_2 \ob{Q}|n_2.
\]

The result follows.
\end{proof}
\begin{TangleCalcII}[$\beta$ non-separating]
Suppose that $L_\beta$ is a knot obtained from $L_\alpha$ by a rational tangle replacement of distance $d$ using $W$. Let $\ob{Q}$ be a suitably embedded surface in the exterior $S^3[\beta]$ of $L_\beta$. Suppose that each component of $\boundary \ob{Q}$ intersects a meridian of $\boundary S^3[\beta]$ $n$ times.
\begin{itemize}
\item If $L_\alpha$ is a link then
\[
K(\ob{Q}) \geq 2q(d - 1) + 2q^*(2d - 1) + 2d|\boundary \ob{Q}|n.
\]
\item If $L_\beta$ is a knot then
\[
K(\ob{Q}) \geq 2(d-1)(q + 2q^*) + 2(d-1)|\boundary \ob{Q}|n.
\]
\end{itemize}
\end{TangleCalcII}
\begin{proof}
These calculations are similar to the calculations of the previous lemma, so we make only a few remarks. First, since $b^*$ and $\boundary \eta(b)$ cobound a thrice-punctured sphere, every meridional arc of $a-b$ intersects $b^*$ at least twice. Since every arc of $a-b$ is meridional, there are $\Delta$ such arcs. Hence $\Delta^* \geq 4d$. Secondly, if $L_\alpha$ is a knot, then $b^*$ intersects $\gamma$ twice and $b$ intersects $\gamma$ not at all. Thus,
\[
q(\Delta - \nu - 2) + q^*(\Delta^* - \nu^* - 2) \geq q(2d - 2) + q^*(4d - 4).
\]
The given inequality follows.
\end{proof}

\subsection{Discs, Spheres, and Meridional Planar Surfaces}

In \cite{EM2}, Eudave-Mu\~noz states six related theorems. In this section, we give new proofs for three of them. Gordon and Luecke \cite{GLu1} have also given different proofs for some of them. The new proofs will follow from the following generalization. Using completely different sutured manifold theory techniques \cite{T} further extends this theorem.

\begin{theorem}\label{Thm: Meridional Planar}
Suppose that $L_\beta$ is a knot or link obtained by a rational tangle replacement of distance $d \geq 1$ on the split link $L_\alpha$. Suppose that  $\boundary W - \boundary \alpha$ does not compress in $N$. Then $L_\beta$ is not a split link or unknot. Furthermore, if $L_\beta$ has an essential properly embedded meridional planar surface with $m$ boundary components, it contains such a surface $\ob{Q}$ with $|\boundary \ob{Q}| \leq m$ such that either $\ob{Q}$ is disjoint from $\ob{\beta}$ or
\[
|\ob{Q} \cap \ob{\beta}|(d - 1) \leq |\boundary \ob{Q}| - 2
\]
\end{theorem}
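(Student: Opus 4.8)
The strategy is to apply the Main Theorem with $N = S^3 - \interior{\eta}(L_\beta \cup B')$... more precisely, to set up the ``refilling meridians'' machinery of Section~\ref{RM} with $W = \eta(L_\beta) \cup B'$, so that $M[\beta] = S^3[\beta]$ is the exterior of $L_\beta$ and $M[\alpha] = S^3[\alpha]$ is the exterior of the split link $L_\alpha$. The hypothesis that $L_\alpha$ is split means $S^3[\alpha] = M[\alpha]$ is reducible, which is exactly the kind of ``interesting surface in $N[a]$'' that makes the Main Theorem bite. The desired conclusions about $L_\beta$ will come from feeding essential spheres/discs (to get the non-splitness and unknottedness statements) or the hypothesized meridional planar surface (to get the inequality) into Theorem~\ref{Thm: Constructing Q} and then into the Main Theorem.

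\textbf{Step 1: Set-up and tautness.} First I would verify the pair $(S^3, W)$ satisfies enough of the ``licit'' conditions: $H_2(S^3) = 0$ and $H_1(S^3) = 0$ are automatic, $N$ is irreducible (since $S^3$ is and $L_\beta$ is nontrivially embedded — here the hypothesis that no component of $L_\beta$ is disjoint from $B$ is used), and $\boundary W - \boundary\alpha$ does not compress in $N$ by hypothesis. Lemma~\ref{Lem: Choosing Sutures} (or Lemma~\ref{Lem: Tangle Compressing} in the tangle language, via the hypothesis that $\boundary W - a$ is incompressible) then gives sutures $\gamma$ with $(N, \gamma \cup a)$ $\nil$--taut and $(N[a],\gamma) = (M[\alpha],\gamma)$ being $\ob{\alpha}$--taut. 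Since $H_1(S^3) = 0$, $H_1(M[\alpha])$ is torsion-free, killing the ``torsion'' alternative in the conclusion of the Main Theorem.

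\textbf{Step 2: Ruling out split/unknot.} Suppose for contradiction $L_\beta$ is split or the unknot; then $M[\beta] = S^3[\beta]$ is reducible or boundary-reducible, so there is an essential sphere or essential disc $\ob{R} \subset M[\beta]$. Apply Corollary~\ref{Cor: Constructing Q}(first bullet) to get an essential sphere or disc $\ob{Q}$ with $\tild q(\ob{Q}) \le \tild q(\ob{R})$ and conclusions (D0),(D1),(D3),(D4). Because $\boundary W - a$ is incompressible in $N$ and $N$ is irreducible, $\ob{Q}$ cannot lie in $N$, so $\tild q(\ob{Q}) > 0$; hence $Q = \ob{Q}\cap N$ has no $a$--boundary compressing disc (D1), no compressing disc (D0), and — via (D3)/(D4) together with the fact that $S^3$ (and $S^3$ with $2$--handles attached along $\boundary\ob{Q}$, which has trivial $H_1$) has no lens-space summand — no $a$--torsion $2g$--gon. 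So the first two alternatives in the Main Theorem's conclusion are excluded, forcing $-2\chi(\ob{Q}) \ge K(\ob{Q})$. But $\ob{Q}$ is a sphere or disc, so $-2\chi(\ob{Q}) < 0$, while $K(\ob{Q}) \ge 0$ by the computation at the start of Section~\ref{RM} (the Tangle Calculations lemmas give $K \ge 0$ here since $d \ge 1$). This contradiction proves $L_\beta$ is not split and not the unknot; equivalently, by the Main Theorem itself, $(M[\alpha],\gamma)$ is $\nil$--taut, so $M[\alpha]$ is irreducible — but that contradicts $L_\alpha$ split, which is the cleanest way to phrase it.

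\textbf{Step 3: The meridional planar surface inequality.} Now suppose $L_\beta$ has an essential meridional planar surface with $m$ boundary components; take $\ob{R}$ to be such a surface. Apply Theorem~\ref{Thm: Constructing Q} (case (I) does not apply since $\ob{R}$ isn't a sphere/disc collection, so use case (II) — but wait, $M[\beta]$ \emph{does} contain an essential sphere if... no: here we are \emph{not} assuming $L_\beta$ split, and having just proved it is not, $M[\beta]$ contains no essential sphere; it may contain essential discs only if $\boundary M[\beta]$ compresses, i.e. $L_\beta$ has an unknotted component — handle that degenerate case separately or absorb it). So (II) holds and Theorem~\ref{Thm: Constructing Q} plus Corollary~\ref{Cor: Constructing Q}(second bullet) produces a connected essential planar meridional surface $\ob{Q}$, with $|\boundary\ob{Q}| \le m$ (by (C1), since $-\chi$ controls the number of boundary components of a planar surface together with $\tilde q$; more carefully $-\chi(\ob{Q}) \le -\chi(\ob{R})$ forces $|\boundary\ob{Q}| \le |\boundary\ob{R}| = m$ for planar surfaces) and with (D0)–(D4). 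If $\tilde q(\ob{Q}) = 0$ then $\ob{Q} \subset N$ is disjoint from $\ob{\beta}$, which is the first alternative of the conclusion, so assume $\tilde q(\ob{Q}) > 0$. Then $Q = \ob{Q}\cap N$ has no compressing or $a$--boundary compressing disc; and (D4), together with the fact that attaching $2$--handles to $S^3$'s link exterior along meridional curves $\boundary\ob{Q}$ yields a manifold with trivial $H_1$ hence no lens-space summand, rules out $a$--torsion $2g$--gons with $g \ge 2$. If there were an $a$--torsion $2$--gon, that's an $a$--boundary compressing disc, already excluded. So there's no $a$--torsion $2g$--gon at all, and also no Gabai disc. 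The Main Theorem's remaining two alternatives (non-$\nil$-taut $N[a]$, or the conditioned-surface alternative, or the solid-torus alternative) would, by the argument of Step~2, force $M[\alpha]$ irreducible, contradicting $L_\alpha$ split — so in fact we must be in the conclusion $-2\chi(\ob{Q}) \ge K(\ob{Q})$.

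\textbf{Step 4: The final inequality from the Tangle Calculations.} It remains to convert $-2\chi(\ob{Q}) \ge K(\ob{Q})$ into $|\ob{Q}\cap\ob{\beta}|(d-1) \le |\boundary\ob{Q}| - 2$. Since $\ob{Q}$ is planar with $|\boundary\ob{Q}|$ boundary components, $-2\chi(\ob{Q}) = 2(|\boundary\ob{Q}| - 2)$. For $K(\ob{Q})$ use Tangle Calculations~I (if $\beta$ separating, i.e. $L_\alpha$ a link with both rational tangles giving a link) or~II (if $\beta$ non-separating). In either case, since $\ob{Q}$ is meridional, each boundary component meets a meridian exactly once ($n = n_1 = n_2 = 1$), and $q$ (resp.\ $q + 2q^* $, counting with multiplicity the intersections with $\ob{\beta}$) equals $|\ob{Q}\cap\ob{\beta}|$. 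Plugging into, say, the ``$L_\beta$ a knot'' branch of Tangle Calculations~II gives $K(\ob{Q}) \ge 2(d-1)|\ob{Q}\cap\ob{\beta}| + 2(d-1)|\boundary\ob{Q}|$, which combined with $2(|\boundary\ob{Q}|-2) \ge K(\ob{Q})$ yields $|\boundary\ob{Q}| - 2 \ge (d-1)|\ob{Q}\cap\ob{\beta}| + (d-1)|\boundary\ob{Q}| \ge (d-1)|\ob{Q}\cap\ob{\beta}|$, as desired; the separating case is the same using Tangle Calculations~I (where there $2q = $ the relevant intersection count). The split-link hypothesis on $L_\alpha$ means $L_\alpha$ is a (genuine, two-component or more) link, so the ``$L_\alpha$ a link'' branches are the relevant ones and the constants work out.

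\textbf{Main obstacle.} The delicate point is Step~3: correctly navigating which hypothesis (I) versus (II) of Theorem~\ref{Thm: Constructing Q} applies and ensuring the constructed $\ob{Q}$ is genuinely meridional with $|\boundary\ob{Q}| \le m$ and connected, \emph{and} that the various ``lens-space summand'' escape clauses in (D3),(D4) are vacuous in $S^3$. One must be careful that attaching $2$--handles to $N[b] = S^3[\beta]$ along the meridional boundary of $\ob{Q}$ indeed produces a manifold with no lens-space summand — this uses that meridional curves on $\boundary\eta(L_\beta)$ are precisely the curves whose filling gives back (a connect sum involving) $S^3$, so $H_1$ stays trivial and there's no torsion, hence no lens-space summand. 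The bookkeeping that $-\chi(\ob{Q}) \le -\chi(\ob{R})$ forces $|\boundary\ob{Q}| \le m$ for \emph{connected} planar $\ob{Q}$ also needs a brief argument, but it is routine once connectedness from Corollary~\ref{Cor: Constructing Q} is in hand. Finally one should double-check the edge case where $L_\beta$ (or a component) is unknotted so that $\boundary M[\beta]$ compresses and (II) technically fails — but in that case $L_\beta$ \emph{is} essentially handled by the already-proven ``not the unknot'' statement applied component-wise, or the planar surface can be taken disjoint from the trivial component.
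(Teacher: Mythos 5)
Your overall strategy matches the paper's: use Lemma~\ref{Lem: Tangle Compressing} to set up the sutured manifold structure, feed essential spheres/discs (resp.\ the meridional surface) through Theorem~\ref{Thm: Constructing Q} and Corollary~\ref{Cor: Constructing Q}, and apply the Main Theorem. Step~2 re-derives what the paper handles by directly citing Theorem~\ref{Thm: MSC Unsymmetrized}; that is fine, though your claim that ``$\ob{Q}$ cannot lie in $N$, so $\tild q(\ob{Q})>0$'' is not quite how the disc case works --- a disc $\ob{Q}$ with $\tild q=0$ can lie in $N$, and what saves the argument is that it must then meet $a$ (so it has no essential arcs, hence trivially no $a$--boundary compressions or $a$--torsion $2g$--gons), exactly as in the proof of Theorem~\ref{Thm: MSC Unsymmetrized}.

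The substantive error is in Step~4. You set $n=1$, asserting that ``each boundary component meets a meridian exactly once.'' But $n_i$ (and $n$) in the Tangle Calculations is the \emph{minimum} geometric intersection number of a component of $\boundary\ob{Q}$ with a meridian of $\boundary S^3[\beta]$; since $\boundary\ob{Q}$ is meridional, two meridional curves on a torus are isotopic to disjoint curves and $n=0$. (Equivalently, in the paper's phrasing, $\boundary\ob{Q}$ can be isotoped disjoint from $a\cup\gamma$, so $\Delta_\boundary=\nu_\boundary=0$.) Your choice $n=1$ produces the intermediate inequality
\[
|\boundary\ob{Q}|-2 \;\ge\; (d-1)\,|\ob{Q}\cap\ob{\beta}| \;+\; (d-1)\,|\boundary\ob{Q}|,
\]
which for $d\ge 2$ forces $(d-2)|\boundary\ob{Q}|\le -2$, impossible whenever $\ob{Q}$ actually meets $\ob{\beta}$. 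So taken at face value your derivation ``proves'' the false strengthening that $\ob{Q}$ is disjoint from $\ob{\beta}$ whenever $d\ge 2$; you only arrive at the stated bound because you silently drop the spurious $(d-1)|\boundary\ob{Q}|$ term at the end. You also invoke the wrong branch of Tangle Calculations~II: $L_\alpha$ is the split link, so the relevant case is ``$L_\alpha$ a link'' ($K(\ob{Q})\ge 2q(d-1)+2q^*(2d-1)+2d|\boundary\ob{Q}|n$), not the other bullet. With $n=0$ this gives $K(\ob{Q})\ge 2q(d-1)+2q^*(2d-1)\ge 2(q+2q^*)(d-1)$, and combining with $-2\chi(\ob{Q})=2|\boundary\ob{Q}|-4\ge K(\ob{Q})$ and the isotopy identifying $q+2q^*=|\ob{Q}\cap\ob{\beta}|$ gives exactly the claimed inequality with no leftover term.
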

\begin{proof}
By Lemma \ref{Lem: Tangle Compressing}, $(N,\gamma \cup a)$ is a taut sutured manifold. Notice that the pair $(S^3,W)$ is licit and that since $L_\alpha$ and $L_\beta$ are related by rational tangle replacement no essential disc in $S^3[\beta]$ is disjoint from $a$. Thus by Theorem \ref{Thm: MSC Unsymmetrized} $L_\beta$ is neither a split link nor an unknot.

Suppose, therefore, that $S^3[\beta]$ contains an essential meridional surface $\ob{R}$ with $m$ boundary components. Use Corollary \ref{Cor: Constructing Q} to obtain the connected planar surface $\ob{Q} \subset S^3[\beta]$ and assume that $\ob{Q}$ is not disjoint from $\ob{\beta}$. That is, assume that $\tild{q} > 0$. Since $\ob{Q}$ is connected and has euler characteristic not lower than our original planar surface, $|\boundary \ob{Q}| \leq m$. The boundary of $\ob{Q}$ is meridional, by construction, since each arc of $a - b$ is meridional. Corollary \ref{Cor: Constructing Q} allows us to conclude that there is no compressing disc, $a$--boundary compressing disc, or $a$--torsion $2g$--gon for $Q$. Also, $S^3[\alpha]$ is reducible and $H_1(S^3[\alpha])$ is torsion-free.

The Main Theorem concludes, therefore, that $K(\ob{Q}) \leq -2\chi(\ob{Q})$. Since $\boundary \ob{Q}$ is disjoint from $a \cup \gamma$ and since $L_\alpha$ is a link the tangle calculations tell us that:
\[
2q(d-1) + 2q^*(2d-1) \leq -2\chi(\ob{Q}).
\]

Since $4q^*(d-1) \leq 2q^*(2d-1)$, we may conclude that $2(q + 2q^*)(d-1) \leq -2\chi(\ob{Q})$. $\ob{Q}$ is a planar surface with $|\boundary \ob{Q}|$ boundary components, implying that $-2\chi(\ob{Q}) = 2|\boundary \ob{Q}| - 4$. Plugging into our inequality and dividing by two, we obtain
\[
(q + 2q^*)(d-1) \leq |\boundary \ob{Q}| - 2.
\]

A slight isotopy pushing the discs in $\ob{Q}$ with boundary parallel to $b^*$ converts each such disc into two discs each with boundary parallel to $b$. Hence, after the isotopy $|\ob{Q} \cap \ob{\beta}| = q + 2q^*$. Consequently,
\[
|\ob{Q} \cap \ob{\beta}|(d-1) \leq |\boundary \ob{Q}| - 2
\]
as desired.
\end{proof}

As corollaries, we have the following classical results. 

\begin{theorem*}[Eudave-Mu\~noz \cite{EM2}]
If $(B,\tau)$ is prime, if $L_\alpha$ is a split link, and if $L_\beta$ is composite then $d(\alpha,\beta) \leq 1$.
\end{theorem*}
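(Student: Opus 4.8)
The plan is to argue by contradiction: assume $(B,\tau)$ is prime, $L_\alpha$ is split, $L_\beta$ is composite, and $d = d(\alpha,\beta) \geq 2$. First I would arrange the hypotheses of Theorem \ref{Thm: Meridional Planar}. Since $L_\alpha$ is a split link it has at least two components, so cutting the genus two handlebody $W = \eta(L_\beta) \cup B'$ along $\alpha$ produces two solid tori; hence $\alpha$ is separating and $a = \partial\alpha$ is an essential separating curve of $F = \partial W$. Because $(B,\tau)$ is prime there is no disc in $B$ separating the strands of $\tau$, and an essential separating curve of $F$ that bounded a disc in $N = B - \inter{\eta}(\tau)$ would yield such a strand-separating disc (or would be met by a strand, impossible as the disc is disjoint from $\tau$); so $a$ does not compress in $N$. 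By Lemma \ref{Lem: Compressing Tangles}, and as in the proof of Lemma \ref{Lem: Tangle Compressing}, $\partial W - \partial\alpha$ is therefore incompressible in $N$, so Theorem \ref{Thm: Meridional Planar} applies to this rational tangle replacement.

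Next I would feed in a meridional planar surface. Since $L_\beta$ is composite, a decomposing $2$--sphere meets $L_\beta$ transversely in two points, so its intersection with the exterior $S^3[\beta]$ is an essential, properly embedded, meridional annulus: a meridional planar surface with $m = 2$ boundary components. Theorem \ref{Thm: Meridional Planar} then yields an essential meridional planar surface $\ob{Q} \subset S^3[\beta]$ with $|\partial\ob{Q}| \leq 2$ for which either $\ob{Q}$ is disjoint from $\ob{\beta}$ or $|\ob{Q} \cap \ob{\beta}|(d-1) \leq |\partial\ob{Q}| - 2$. The boundary curves of $\ob{Q}$ are meridians of $L_\beta$, so $|\partial\ob{Q}| \geq 1$; and $|\partial\ob{Q}| = 1$ is impossible, since capping a disc in $S^3[\beta]$ with meridional boundary by a meridian disc of $\eta(L_\beta)$ would produce a $2$--sphere meeting $L_\beta$ once. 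Hence $|\partial\ob{Q}| = 2$, so $\ob{Q}$ is an essential meridional annulus and $|\partial\ob{Q}| - 2 = 0$. As $d \geq 2$, the inequality $|\ob{Q} \cap \ob{\beta}|(d-1) \leq 0$ forces $|\ob{Q} \cap \ob{\beta}| = 0$; thus in either case $\ob{Q}$ is disjoint from $\ob{\beta}$.

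Finally I would extract the contradiction. Being disjoint from $\ob{\beta}$ means $q(\ob{Q}) = q^*(\ob{Q}) = 0$, so $Q = \ob{Q}$ and $\ob{Q}$ may be isotoped into $N = B - \inter{\eta}(\tau)$, the exterior of the tangle $(B,\tau)$; its two boundary curves, being meridians of $L_\beta = \tau \cup r_\beta$ lying on $\partial W$, are meridians of strands of $\tau$. Capping $\ob{Q}$ off with two meridian discs of $\eta(\tau)$ gives a $2$--sphere $S' \subset B$ meeting $\tau$ in exactly two points, and because $\ob{Q}$ is essential in $S^3[\beta]$, $S'$ does not cut off from $B$ a ball in which $\tau$ restricts to a single trivial arc. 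This is incompatible with $(B,\tau)$ being prime, so $d \leq 1$, as claimed.

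I expect the last step to be the main obstacle: turning ``$\ob{Q}$ lies in $N$ and is essential in $S^3[\beta]$'' into an honest violation of primeness. One has to treat the two meridional boundary curves of $\ob{Q}$ according to whether they lie on the same strand of $\tau$ or on different strands. In the first case $\ob{Q}$ is a meridional annulus in the prime tangle, hence boundary-parallel onto a sub-annulus $A'$ of the boundary of $\eta(\tau)$; since $q(\ob{Q}) = 0$ the curves $\partial\ob{Q}$ are not parallel to $b = \partial\beta$, which forces $A'$ to be disjoint from $b$, so the parallelism survives into $S^3[\beta]$ and contradicts essentiality of $\ob{Q}$. In the second case the capped sphere $S'$ meets each strand once; using the parity of $|S' \cap L_\beta|$, the absence of local knots, and the absence of strand-separating discs, an innermost-disc/outermost-arc cleanup shows the two-string tangle cut off by $S'$ can be neither trivial (that would again make $\ob{Q}$ boundary-parallel) nor non-trivial (that would contradict primeness). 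The preliminary verification that Theorem \ref{Thm: Meridional Planar} applies is routine given Lemmas \ref{Lem: Compressing Tangles} and \ref{Lem: Tangle Compressing}.
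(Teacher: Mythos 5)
Your proposal is correct and takes essentially the same approach as the paper: verify the incompressibility hypothesis via Lemma \ref{Lem: Compressing Tangles}/Lemma \ref{Lem: Tangle Compressing}, feed the composite-decomposing annulus into Theorem \ref{Thm: Meridional Planar} with $m=2$ to obtain a meridional annulus $\ob{Q}$, and use primeness to rule out $\ob{Q}$ being disjoint from $\ob{\beta}$, after which the inequality $|\ob{Q}\cap\ob{\beta}|(d-1)\leq |\boundary\ob{Q}|-2 = 0$ forces $d\leq 1$. The paper states the primeness step (that a meridional annulus in $N$ would be boundary-parallel there and hence in $S^3[\beta]$, contradicting essentiality) in a single sentence, whereas you spell out the two cases, but the underlying reasoning is the same.
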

\begin{proof}
Suppose that $d \geq 1$. Since $(B,\tau)$ is prime and $\alpha$ is separating, Lemma \ref{Lem: Tangle Compressing} shows that $\boundary W - a$ is incompressible in $N$. Since $L_\beta$ contains an essential meridional annulus, we may apply Theorem \ref{Thm: Meridional Planar} with $m = 2$. Since there are no meridional discs, $\ob{Q}$ is also a meridional annulus. Since $(B,\tau)$ is prime, $\ob{Q}$ is not disjoint from $\ob{\beta}$. The inequality from the theorem, shows that $d = 1$.
\end{proof}

\begin{theorem*}[Eudave-Mu\~noz \cite{EM2}]
If $(B,\tau)$ is any tangle and if $L_\alpha$ and $L_\beta$ are split links, then $r_\alpha = r_\beta$.
\end{theorem*}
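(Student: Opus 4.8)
The plan is to suppose that $d = d(r_\alpha,r_\beta)$ is at least $1$ and derive a contradiction; since $d = 0$ is exactly the assertion $r_\alpha = r_\beta$, this suffices. Write $W = B' \cup \eta(\tau)$ and $N = S^3 - \inter{W}$ as in the paragraph preceding Lemma \ref{Lem: Compressing Tangles}, and note that $W$ and $N$ depend only on $\tau$, so the whole setup is symmetric under interchanging $\alpha$ and $\beta$. Since $L_\alpha$ and $L_\beta$ are split links, each has two components, so $\alpha$ and $\beta$ are separating discs in $W$; equivalently $a = \boundary\alpha$ and $b = \boundary\beta$ separate $\boundary N$, and, being boundaries of trivializing discs for the rational tangles $r_\alpha$ and $r_\beta$, each of $a$ and $b$ separates the four points $\boundary\tau$ on $\boundary B$ into two pairs, two on each side.

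First I would rule out the possibility that $\boundary W - a$ does not compress in $N$. In that case Theorem \ref{Thm: Meridional Planar} applies to the rational tangle replacement of distance $d \geq 1$ taking the split link $L_\alpha$ to $L_\beta$, and it yields that $L_\beta$ is not a split link -- contradicting our hypothesis. By the symmetry between $\alpha$ and $\beta$, the same argument rules out the possibility that $\boundary W - b$ does not compress in $N$. Hence both $\boundary W - a$ and $\boundary W - b$ compress in $N$, so by Lemma \ref{Lem: Compressing Tangles} the curves $a$ and $b$ themselves bound discs in $N$.

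Next I would use these compressing discs to pin down $(B,\tau)$. Let $D \subset N$ be a disc with $\boundary D = a$. Then $D$ separates $B$ into two balls, and since $a$ separates the four points of $\boundary\tau$ two and two while $\tau$ is disjoint from $D$, no arc of $\tau$ can pass between the two balls; hence each ball meets $\tau$ in exactly one of its two strands, so $D$ is a disc in $B$ separating the strands of $\tau$. Likewise $b$ bounds such a disc. Thus $(B,\tau)$ admits a disc separating its strands, and such a disc is unique up to isotopy (if $(B,\tau)$ is rational this is the uniqueness of its trivializing disc, and in general it is the uniqueness of the separating disc in the decomposition of a non-prime tangle, obtained from the usual innermost-disc arguments together with irreducibility of $N$). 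Consequently $a$ and $b$ are isotopic on $\boundary B$.

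It then follows that $\alpha$ and $\beta$, being properly embedded discs in the ball $B'$ with isotopic boundary curves, are isotopic in $B'$; therefore $d(r_\alpha,r_\beta) = 0$, contradicting $d \geq 1$. This forces $d = 0$, i.e. $r_\alpha = r_\beta$. I expect the third paragraph to be the crux: the only nontrivial input beyond Theorem \ref{Thm: Meridional Planar} and Lemma \ref{Lem: Compressing Tangles} is the uniqueness up to isotopy of the strand-separating disc of $(B,\tau)$, and a little care is needed to check that the two compressing discs produced in the second paragraph have boundaries that are genuinely isotopic on $\boundary B$, not merely disjoint.
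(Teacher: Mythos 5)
Your argument is correct and follows the paper's proof in essentially the same way: rule out incompressibility of $\boundary W - a$ (and, symmetrically, of $\boundary W - b$) via Theorem \ref{Thm: Meridional Planar}, deduce from Lemma \ref{Lem: Compressing Tangles} that $a$ and $b$ bound discs in $N$ separating the strands of $\tau$, and invoke the uniqueness up to isotopy of a strand-separating disc to conclude $r_\alpha = r_\beta$. You have in fact spelled out, a bit more carefully than the paper, why those compressing discs must separate the strands of $\tau$. One small remark for your peace of mind: the paper's own write-up has a third paragraph that re-handles the case $\boundary W - a$ incompressible directly via Corollary \ref{Cor: Constructing Q} and the Main Theorem; since that case is already dispatched in the first paragraph by citing Theorem \ref{Thm: Meridional Planar} (whose proof does not rely on the present theorem, so there is no circularity), that extra paragraph is a redundant alternative and its absence from your proof is not a gap.
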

\begin{proof}
It suffices to show that $\alpha$ and $\beta$ are disjoint. Suppose not, so that $d \geq 1$. If $\boundary W - a$ is incompressible in $N$ then by Theorem \ref{Thm: Meridional Planar} $L_\beta$ is not a split link. Thus, $\boundary W - a$ compresses in $N$. By reversing the roles of $\alpha$ and $\beta$ we can also conclude that $\boundary W - b$ compresses in $N$. Since both $\alpha$ and $\beta$ are separating, Lemma \ref{Lem: Compressing Tangles} shows that both $a$ and $b$ compress in $N$.

There is, therefore, a disc $D_a$ in $B$ with boundary $a$ separating the strings of $\tau$. Similarly, there is a disc $D_b$ in $B$ with boundary $b = \boundary \beta$ separating the strings of $\tau$. An easy innermost disc/outermost arc argument shows that $D_a$ and $D_b$ are isotopic. In particular, $a$ and $b$ are isotopic in $\boundary B - \tau$ which implies that $r_\alpha = r_\beta$.

Thus we may assume, without loss of generality, that $\boundary W - \boundary \alpha$ is not compressible in $N$. Let $\ob{R}$ be an essential sphere in $S^3[\beta]$ and apply Corollary \ref{Cor: Constructing Q} to obtain an essential sphere or disc $\ob{Q}$. Since $a-b$ consists of meridional arcs, $\ob{Q}$ is not disjoint from $\eta(a)$. If $\ob{Q}$ were a disc disjoint from $\ob{\beta}$, there would be no $a$--boundary compressing disc for $\ob{Q}$. If $\ob{Q}$ is a sphere, $\tild{q} > 0$. Thus, we may apply the main theorem to conclude that $S^3[\alpha]$ is irreducible or that $\alpha$ and $\beta$ are disjoint. If the latter is true, $r_\alpha = r_\beta$.
\end{proof}

\begin{theorem*}[Scharlemann \cite{S1}]
If $(B,\tau)$ is any tangle and $L_\beta$ is a trivial knot and $L_\alpha$ a split link then $(B,\tau)$ is a rational tangle and $d \leq 1$.
\end{theorem*}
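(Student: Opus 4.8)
The plan is to use Theorem~\ref{Thm: Meridional Planar} to force $\boundary W-\boundary\alpha$ to compress in $N$, and then finish with a classical tangle argument. First observe that $d\geq 1$: if $d=0$ then $r_\alpha=r_\beta$ and $L_\alpha=L_\beta$, which is impossible since a split link has at least two components while the unknot has one. Observe also that $\alpha$ is separating in $W$: cutting a genus two handlebody along a non--separating disc produces a solid torus, which would force $L_\alpha$ to be a knot; so $L_\alpha$ has exactly two components and $\alpha$ separates $W$ into two solid tori.

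Next I would apply the contrapositive of Theorem~\ref{Thm: Meridional Planar}. The pair $(S^3,W)$ is licit, so if $\boundary W-\boundary\alpha$ did not compress in $N$, that theorem (with $L_\alpha$ the split link and $d\geq 1$) would conclude that $L_\beta$ is not a split link or unknot, contradicting the hypothesis that $L_\beta$ is trivial. Hence $\boundary W-\boundary\alpha$ compresses in $N$. Since $\alpha$ is separating and $\boundary N$ has genus two, Lemma~\ref{Lem: Compressing Tangles} shows that $a=\boundary\alpha$ itself bounds a disc $D_a\subset N=B-\inter{\eta}(\tau)$; because $\alpha$ separates $W$, the curve $a\subset\boundary B-\tau$ separates the four points of $\boundary\tau$ into the two pairs of endpoints of the two arcs of $\tau$, so $D_a$ is disjoint from $\tau$ and separates its two strands. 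This is the point at which the hypotheses of the Main Theorem fail; the sutured--manifold machinery of the earlier sections has now done its job, and the rest of the argument is hands--on.

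The disc $D_a$ exhibits $(B,\tau)$ as a union $(B_1,t_1)\cup_{D_a}(B_2,t_2)$ of two ball--arc pairs. If some $t_i$ had a local knot, that knot would be a nontrivial connected summand of $L_\beta$, which is absurd; so each $t_i$ is $\boundary$--parallel in $B_i$ and $(B,\tau)$ is a rational tangle with trivializing disc $D_a$. To bound $d$ I would use that $L_\alpha=\tau\cup r_\alpha$ is split while $L_\beta=\tau\cup r_\beta$ is the unknot: with $(B,\tau)$, $(B',r_\alpha)$ and $(B',r_\beta)$ all rational, $L_\alpha$ and $L_\beta$ are two--bridge links, so (since the only split two--bridge link is the two--component unlink) the trivializing discs of $\tau$ and $r_\alpha$ have the same boundary slope on $\boundary B=\boundary B'$, while the trivializing discs of $\tau$ and $r_\beta$ lie at distance $1$. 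Combining, the trivializing discs of $r_\alpha$ and $r_\beta$ lie at distance $1$, i.e.\ $d=d(r_\alpha,r_\beta)\leq 1$.

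\textbf{The main obstacle} is this last paragraph. The sutured--manifold input supplies only the compressibility of $\boundary W-\boundary\alpha$; turning the splitting sphere for $L_\alpha$ and the triviality of $L_\beta$ into the bound $d\leq 1$ requires the classical analysis: an innermost--circle reduction arranging that the splitting sphere for $L_\alpha$ meets $\boundary B$ in a single trivializing curve, the fact that two--bridge links are non--split apart from the unlink, and the dictionary between the tangle distance $d$ and the distance of the corresponding boundary slopes (most cleanly seen in the double branched cover, where slopes on $\boundary B$ become slopes on a torus). These are precisely the ingredients of Scharlemann's and Eudave--Mu\~noz's original arguments, recovered here once the sutured--manifold theory has supplied the topological step that $\boundary W-\boundary\alpha$ must compress.
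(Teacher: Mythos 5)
Your proof follows the paper's own proof almost exactly. The first, substantive part is identical: establish $d \geq 1$ and that $\alpha$ is separating, use the contrapositive of Theorem~\ref{Thm: Meridional Planar} to conclude $\partial W - \partial\alpha$ compresses in $N$, apply Lemma~\ref{Lem: Compressing Tangles} to get a trivializing disc $D_a \subset N$ bounded by $a$, and then use the unknottedness of $L_\beta$ to rule out local knots, concluding $(B,\tau)$ is rational. This is precisely the paper's argument, and you are right that this is the point where the sutured manifold machinery has done its work.

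The only divergence is in how the bound $d \leq 1$ is extracted. The paper passes to the double branched cover: $\mathrm{DBC}(S^3, L_\beta) = S^3$, the preimage of $B$ is an unknotted solid torus, and replacing $r_\beta$ by $r_\alpha$ is a distance-$d$ surgery on that solid torus producing $\mathrm{DBC}(S^3,L_\alpha)$; since $L_\alpha$ is split, this cover is reducible, which forces it to be $S^1 \times S^2$ and hence $d = 1$. You instead invoke the two-bridge classification directly — $L_\alpha$ is a split two-bridge link, hence the unlink, so the slopes of $D_\tau$ and $D_{r_\alpha}$ agree, while the unknot $L_\beta$ puts $D_\tau$ and $D_{r_\beta}$ at distance one. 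These are two phrasings of the same Montesinos-trick fact (as you yourself observe, ``most cleanly seen in the double branched cover''), so the difference is cosmetic rather than substantive. Both leave to classical tangle theory exactly the pieces you list in your closing paragraph, and you have correctly identified that the paper's contribution here is the sutured manifold step supplying the compressibility of $\partial W - \partial\alpha$.
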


\begin{proof}
Suppose $d \geq 1$. If $\boundary W - a$ were incompressible in $N$ then, by Theorem \ref{Thm: Meridional Planar}, $L_\beta$ would not be the unknot. Hence $\boundary W - a$ is compressible in $N$. Since $\alpha$ is separating, Lemma \ref{Lem: Compressing Tangles} shows that $a$ compresses in $N$. Since $L_\beta$ is the unknot, $\tau$ has no local knots. Thus, $(B,\tau)$ is a rational tangle with trivializing disc having boundary $a$.  

It remains to prove that $d = 1$. Since $L_\beta$ is the unknot, a double-branched cover of $S^3$ with branch set $L_\beta$ is $S^3$. The preimage $\tild{B}$ of $B$ is an unknotted solid torus. There is a correspondence between rational tangle replacement and Dehn-surgery in the double-branched cover. Replacing $(B',r_\beta)$ with $(B',r_\alpha)$ converts the double-branched cover to a lens space, $S^3$ or $S^1 \times S^2$. In the double branched cover, the Dehn surgery is achieved by making a curve in $\boundary \tild{B}$ which intersects a meridian of $\tild{B}$ $d$ times bound a disc in the complementary solid torus. Since $L_\alpha$ is a split link, the double branched cover of $S^3$ over $L_\alpha$ is reducible. Thus, it must be $S^1 \times S^2$ and $d$ must be one, as desired.
\end{proof}

\begin{remark}
In the proof of the previous theorem, note that even without proving $d \leq 1$, we have provided a new proof of Scharlemann's band sum theorem \cite{S1}: If $K = K_1 \#_b K_2$ is the unknot then the band sum is the connected sum of unknots. To see this note that $W$ is $\eta(K_1 \cup K_2 \cup b)$ where $b$ is the band. The tangle $(B,\tau)$ is  $(S^3 - \inter{\eta}(b), (K_1 \cup K_2) - \inter{\eta}(b))$. Since $\boundary \beta$ is a loop which encircles the band, $\boundary \beta$ only bounds a disc in $(B,\tau)$ when the band sum is a connected sum and $K_1$ and $K_2$ are unknots.
\end{remark}

\cite{T} gives other significant applications of sutured manifold theory to problems involving rational tangle replacement.

\section{Intersections of $\nil$--taut Surfaces} \label{Intersections}

The main theorem is particularly useful for studying a homology class in $H_2(N[a],\boundary N[a])$ which is not represented by a surface disjoint from $\ob{\alpha}$. The propositions of this section consist of observations which can dramatically simplify the combinatorics of such a situation. Let $N$ be a compact, orientable 3--manifold with $F \subset \boundary M$ a genus 2 boundary component. Let $a,b \subset F$ be essential curves which cannot be isotoped to be disjoint and suppose that $(N[a],\gamma)$ is $\ob{\alpha}$--taut, as in Section \ref{Sutures}.

\subsection{Intersection Graphs}
\begin{proposition}\label{Prop: Nil Taut}
Let $(N[a],\gamma)$ and $b$ be as above and suppose that $z \in H_2(N[a],\boundary N[a])$ is a non-trivial homology class. Suppose that $N[a]$ does not contain an essential disc disjoint from $\ob{\alpha}$. Then $z$ is represented by an embedded conditioned $\ob{\alpha}$--taut surface $\ob{P}$. Furthermore, for any such $\ob{P}$ , either $\ob{P}$ is disjoint from $\ob{\alpha}$ or $P = \ob{P} \cap N$ has no compressing discs, $b$--boundary compressing discs or $b$--torsion $2g$--gons. 
\end{proposition}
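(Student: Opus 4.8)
The plan is to prove the two assertions in turn, the existence by standard work with conditioned surfaces and the ``furthermore'' by transporting the $\ob{\alpha}$--tautness of $\ob{P}$ down to $P=\ob{P}\cap N$ and then playing it off against the hypothesis that $N[a]$ contains no essential disc disjoint from $\ob{\alpha}$. For existence, standard sutured manifold arguments (cf. \cite[\S 2]{S3}) produce a conditioned surface representing $z$; among these, choosing $\ob{P}$ with $\chi_{\ob{\alpha}}(\ob{P})$ minimal makes it $\ob{\alpha}$--minimizing in $H_2(N[a],\boundary\ob{P})$, compressing $\ob{P}-\ob{\alpha}$ whenever possible (discarding any inessential $2$--sphere that appears, which is null--homologous since $N[a]-\ob{\alpha}$ is irreducible) makes it $\ob{\alpha}$--incompressible, and the usual isotopy making $\ob{\alpha}$ meet $\ob{P}$ with consistent sign makes it $\ob{\alpha}$--taut.

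For the ``furthermore'', fix any conditioned $\ob{\alpha}$--taut $\ob{P}$ representing $z$ and assume $\ob{P}\cap\ob{\alpha}\ne\nil$. Isotope $\ob{P}$ so that $\ob{P}\cap(\alpha\times I)$ is a collection of $|\ob{P}\cap\ob{\alpha}|$ meridian discs of the $2$--handle; then $P=\ob{P}\cap N$ is properly embedded in $N$ with $\boundary P$ consisting of $\boundary_0 P=\boundary\ob{P}\cap\boundary N$ together with $|\ob{P}\cap\ob{\alpha}|$ copies of $a$ in $\eta(a)$. Since $(\alpha\times I)-\ob{\alpha}$ deformation retracts onto its attaching annulus $\eta(a)\subset F$, the manifold $N[a]-\ob{\alpha}$ deformation retracts onto $N$, and $\ob{P}-\ob{\alpha}$ is homeomorphic to $P$ (with annular collars glued along the $a$--parallel curves). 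Hence a compressing disc for $P$ in $N$ is a compressing disc for $\ob{P}-\ob{\alpha}$ in $N[a]-\ob{\alpha}$, contradicting $\ob{\alpha}$--incompressibility; so $P$ has no compressing disc.

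It remains to rule out $b$--torsion $2g$--gons for $P$, a $b$--boundary compressing disc being a $b$--torsion $2$--gon. Suppose $D$ were one, with accompanying rectangle $E\subset F-\boundary P$. Because $D\subset N$ and $\ob{\alpha}\subset(\alpha\times I)-\eta(a)$, both $D$ and $E$ are disjoint from $\ob{\alpha}$. I would then adapt the constructions in the proof of Theorem \ref{Thm: Constructing Q}, with the roles of $a$ and $b$ interchanged and ``$\ob{\alpha}$--taut'' in place of ``essential'', to $\ob{R}=\ob{P}$: attach $E$ to $P$, compress along $D$, and recap with meridian discs of the $2$--handle to obtain an orientable surface $\ob{P}'\subset N[a]$ (orientability coming from the corresponding clause in the definition of $b$--torsion $2g$--gon). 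If a component of $\ob{P}'$ closes up to a disc or $2$--sphere, a regular neighbourhood of it together with $E$ and $D$ is a lens space connected summand of $N[a]$, impossible since $N[a]-\ob{\alpha}$, hence $N[a]$, is irreducible. Otherwise $\ob{P}'$ is a conditioned surface representing $z$, still $\ob{\alpha}$--incompressible, obtained from $\ob{P}$ by a move that (as $D$ and $E$ miss $\ob{\alpha}$) creates no new intersection with $\ob{\alpha}$ while strictly simplifying the surface, so $\chi_{\ob{\alpha}}(\ob{P}')<\chi_{\ob{\alpha}}(\ob{P})$, contradicting $\ob{\alpha}$--tautness; in the exceptional case $\chi_{\ob{\alpha}}(\ob{P})=0$ --- where $\ob{P}$ has a disc component meeting $\ob{\alpha}$ once --- the same construction instead exhibits an essential disc in $N[a]$ disjoint from $\ob{\alpha}$, contradicting the hypothesis.

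I expect this last step to be the main obstacle. One must re--run the case analysis of Theorem \ref{Thm: Constructing Q} with $a$ and $b$ swapped, keeping track of whether the corners of $D$ lie on the $a$--parallel curves of $\boundary P$ (so that $\boundary\ob{P}$, the class $z$, and the relative homology group in which $\ob{\alpha}$--minimality is measured are unaffected) or on $\boundary_0 P$ (where one instead argues the configuration is incompatible with $\ob{P}$ being conditioned, or yields the essential disc directly); and --- the genuine subtlety --- one must check that the rectangle/annulus attachment used to restore orientability and essentiality actually lowers $\chi_{\ob{\alpha}}$, and not merely the ad hoc complexity of Theorem \ref{Thm: Constructing Q}. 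The homology algebra for existence, the deformation retractions, and the irreducibility of $N[a]$ are all routine.
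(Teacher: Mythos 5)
The key step in the paper's proof is an orientation argument that you do not have, and without it your construction has a genuine gap. Specifically: after observing (correctly) that the rectangle $E$ accompanying the $b$--torsion $2g$--gon $D$ must be attached along two arcs $x,y\subset\boundary P$, the paper first shows that \emph{at most one} of $x,y$ can lie on the $a$--parallel circles $\boundary P - \boundary\ob{P}$. The argument is a sign count around $\boundary D$: because $\ob{\alpha}$ meets $\ob{P}$ coherently, the $a$--parallel circles are coherently oriented, so each $\epsilon_i$ has the same sign of intersection with $\boundary P$ at its head and its tail, and since the $\epsilon_i$ are oriented--parallel they all carry a single common sign; but each arc $\delta_j = \boundary D\cap P$ must flip the sign between its endpoints (it is an arc of intersection of two transverse surfaces). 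Running around $\boundary D$ this is a contradiction unless some end of $E$ lies on $\boundary\ob{P}$. This is exactly the fact you would need to justify "recap with meridian discs of the $2$--handle'': if both $x$ and $y$ lay on $a$--parallel circles, the boundary curve created by attaching $E$ and compressing along $D$ lives in $\eta(a)\cup\eta(b)$ and is in general \emph{not} $a$--parallel, so there is no meridian disc of the $2$--handle to cap it with, and $\ob{P}'\subset N[a]$ is not produced by the move you describe. Your plan to re--run the case analysis of Theorem \ref{Thm: Constructing Q} with $a$ and $b$ swapped is offered to patch this, but that case analysis (discarding boundary--parallel annuli, compressing and boundary--compressing away inessential pieces, discarding null--homologous components) does not obviously preserve the homology class $z$, which you need in order to contradict $\ob{\alpha}$--minimality; and you correctly flag, but do not resolve, the fact that one must show $\chi_{\ob{\alpha}}$ strictly decreases throughout.

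Once the orientation argument is in hand, the paper's proof is a short bookkeeping computation that is much simpler than what you propose. In the two remaining configurations (exactly one end of $E$ on an $a$--parallel circle, with disc $\alpha_1\subset\ob{P}$; or both ends on $\boundary\ob{P}$, $\alpha_1=\nil$) one attaches $E$ to $\ob{P}-\alpha_1$ and compresses along $D$. In the first case $\chi$ is unchanged while $|\ob{\alpha}\cap\tild{P}|$ drops by one; in the second case $|\ob{\alpha}\cap\tild{P}|$ is unchanged while $-\chi$ drops by one. In both cases $\chi_{\ob{\alpha}}(\tild{P})=\chi_{\ob{\alpha}}(\ob{P})-1$, which contradicts $\ob{\alpha}$--minimality of $\ob{P}$. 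The hypothesis that $N[a]$ contains no essential disc disjoint from $\ob{\alpha}$ is used here precisely to justify $\chi_{\ob{\alpha}}(\cdot) = |\ob{\alpha}\cap\cdot| - \chi(\cdot)$ for these surfaces (ruling out disc components disjoint from $\ob{\alpha}$ and spheres met once) --- this is where that hypothesis does its work, rather than in the lens--space/irreducibility argument you substitute. Your treatments of existence and of $\ob{\alpha}$--incompressibility $\Rightarrow$ incompressibility of $P$ are fine and match the paper's.
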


\begin{proof} 
Let $\ob{P}$ be a conditioned $\ob{\alpha}$--taut surface. (Such a surface is guaranteed to exist by Theorem 2.6 of \cite{S3}.) Suppose that $\ob{P}$ is not disjoint from $\ob{\alpha}$. Recall from the definition of ``$\ob{\alpha}$--taut'' that $\ob{\alpha}$ intersects $\ob{P}$ always with the same sign. Because $\ob{P}$ is $\ob{\alpha}$--taut, $P$ is incompressible. Suppose that $D$ is a $b$--torsion $2g$--gon for $P$. If $g = 1$, $D$ is a $b$--boundary compressing disc for $P$. Let $\epsilon_i$ be the arcs $\boundary D \cap F$. Let $R$ be the rectangle containing the $\epsilon_i$ from the definition of $b$--torsion $2g$--gon. Suppose that the ends of $R$ are on components of $\boundary P - \boundary \ob{P}$. The endpoints of the $\epsilon_i$ have signs arising from the intersection of $\boundary D$ with $\boundary P$. Since $\ob{\alpha}$ always intersects $\ob{P}$ with the same sign an arc $\epsilon_i$ has the same sign of intersection at both its head and tail. Since the arcs are all parallel, all heads and tails of all the $\epsilon_i$ have the same sign of intersection. However, an arc of $\boundary D \cap P$ must have opposite signs of intersection, arising as it does from the intersection of two surfaces. This implies that the head of some $\epsilon_i$ has a sign different from the tail of some $\epsilon_i$, a contradiction. Hence, at least one end of $R$ must lie on a component of $\boundary \ob{P}$.

If one end of $R$ is on $\boundary P - \boundary \ob{P}$ denote that component by $a_1$ and call the disc which it bounds in $\ob{P}$, $\alpha_1$. If both ends of $R$ are on $\boundary \ob{P}$, let $\alpha_1 = \nil$. Attach $R$ to $\ob{P} - \alpha_1$ creating a surface $\tild{P}$. The disc $D$ is contained in $N$ and, therefore, had interior disjoint from $\ob{\alpha}$. Compress $\tild{P}$ using $D$ and continue to call the result $\tild{P}$.

An easy calculation shows that if $\alpha_1 \neq \nil$, then $\chi(\tild{P}) = \chi(\ob{P})$ but $|\ob{\alpha} \cap \tild{P}| = |\ob{\alpha} \cap \ob{P}| - 1$. Similarly, if $\alpha_1 = \nil$, then $-\chi(\tild{P}) = -\chi(\ob{P}) - 1$ and $|\ob{\alpha} \cap \tild{P}| = |\ob{\alpha} \cap \ob{P}|$. If $\chi_{\ob{\alpha}}(\ob{P}) \neq |\ob{\alpha} \cap \ob{P}| - \chi(\ob{P})$ then a component of $\ob{P}$ is a disc disjoint from $\ob{\alpha}$ or a sphere intersected by $\ob{\alpha}$ once. Either of these contradict our hypotheses on $N[a]$. Suppose, therefore, that $\chi_{\ob{\alpha}}(\ob{P}) = |\ob{\alpha} \cap \ob{P}| - \chi(\ob{P})$. 

Similarly, $\chi_{\ob{\alpha}}(\tild{P}) = |\ob{\alpha} \cap \tild{P}| - \chi(\tild{P})$. Hence, $\chi_{\ob{\alpha}}(\tild{P}) = \chi_{\ob{\alpha}}(\ob{P}) - 1$. Since $\ob{\alpha}$ always intersects $\tild{P}$ with the same sign, $\ob{P}$ is not $\ob{\alpha}$--taut, a contradiction. Hence, there are no $b$--torsion $2g$--gons for $P$.
\end{proof}

\begin{remark}
As Scharlemann notes in \cite{S5}, when $a$ and $b$ are non-separating it can be difficult to use combinatorial methods to analyze the intersection of surfaces in $N[a]$ and $N[b]$. The primary reason for this is the need to work with $a^*$ and $b^*$ boundary components on the surfaces. The previous proposition shows that when the surfaces in question are $\ob{\alpha}$--taut and $\ob{\beta}$--taut and not disjoint from $\ob{\alpha}$ and $\ob{\beta}$, respectively, there is no need to consider $a^*$ and $b^*$ curves.
\end{remark} 

The remainder of this section develops notation for studying the intersection graphs of such surfaces. Let $\ob{P} \subset N[a]$ be an $\ob{\alpha}$--taut surface and let $\ob{Q} \subset N[b]$ be a $\ob{\beta}$--taut surface. Suppose that $\ob{P}$ and $\ob{Q}$ are not disjoint from $\ob{\alpha}$ and $\ob{\beta}$ respectively. Suppose also that there is no $b$--torsion $2g$--gon for $P = \ob{P} \cap N$ and no $a$--torson $2g$--gon for $Q = \ob{Q} \cap N$. It is clear that $P$ and $Q$ are incompressible.

In Section \ref{2--handle}, we defined intersection graphs between $\ob{Q}$ and a disc $D$. We now define, in a similar fashion, intersection graphs between $\ob{P}$ and $\ob{Q}$. Orient $P$ (respectively, $Q$) so that all boundary components of $\boundary P - \boundary\ob{P}$ ($\boundary Q - \boundary\ob{Q}$, respectively) are parallel on $\eta(\ob{\alpha})$ ($\eta(\ob{\beta}$), respectively).  The intersection of $P$ and $Q$ forms graphs $\Lambda_\alpha$ and $\Lambda_\beta$ on $\ob{P}$ and $\ob{Q}$. A component of $\boundary P - \boundary\ob{P}$ or $\boundary Q - \boundary \ob{Q}$ is called an \defn{interior boundary component}.  The vertex of $\Lambda_\alpha$ or $\Lambda_\beta$ to which it corresponds is called an \defn{interior vertex}.  

Label the components of $\boundary Q \cap \eta(a)$ as $1, \hdots, \mu_Q$ and the components of $\boundary P \cap \eta(b)$ as $1, \hdots, \mu_P$.  The labels should be in order around $\eta(a)$ and $\eta(b)$.  An endpoint of an edge on an interior vertex of $\Lambda_\alpha$ corresponds to an arc of $\boundary Q \cap \boundary \eta(\alpha)$.  Give the endpoint of the edge the label associated to that arc.  Similarly, label all endpoints of edges on interior vertics of $\Lambda_\beta$.  A \defn{Scharlemann cycle} is a type of cycle which bounds a disc in $\ob{P}$ ($\ob{Q}$, respectively).  The interior of the disc must be disjoint from $\Lambda_\alpha$ ($\Lambda_\beta$) and all of the vertices of the cycle must be interior vertices.  Furthermore, the cycle can be oriented so that the tail end of each edge has the same label.  This is the same notion of Scharlemann cycle as in Section \ref{2--handle}, but adapted to the, possibly non-planar, surfaces $\ob{P}$ and $\ob{Q}$.

\begin{lemma}\label{Lem: No Cycles}
There is no Scharlemann cycle in $\Lambda_\alpha$ or $\Lambda_\beta$. 
\end{lemma}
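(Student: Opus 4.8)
The plan is to derive a contradiction from the existence of a Scharlemann cycle by using it to improve the taut surface $\ob{P}$ (or $\ob{Q}$), exactly as in the proof of Proposition \ref{Prop: Nil Taut}. Suppose, for concreteness, that $\sigma$ is a Scharlemann cycle of length $g$ in $\Lambda_\alpha$, bounding a disc $D \subset \ob{P}$ whose interior is disjoint from $\Lambda_\alpha$ and whose vertices are all interior vertices of $\Lambda_\alpha$. The first step is to observe that $D$, being cut off by arcs of $Q$ inside $\ob{P}$, is essentially a disc lying in $N$ with boundary on $P \cup F$: the edges of $\sigma$ are arcs of $P \cap Q$ and the ``corners'' at the interior vertices run along $\eta(\ob{\beta}) \subset \eta(b)$. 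Thus $D$ plays the role of a $b$--torsion $2g$--gon for $Q$ — and indeed the whole point of the combinatorial setup in Section \ref{2--handle} is that a Scharlemann cycle of length $g$ in the graph on one surface is the boundary of an $a$- (or $b$-) torsion $2g$--gon for the other. So the first move is simply: a Scharlemann cycle in $\Lambda_\alpha$ produces a $b$--torsion $2g$--gon for $P$, and symmetrically a Scharlemann cycle in $\Lambda_\beta$ produces an $a$--torsion $2g$--gon for $Q$.

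The second step is to invoke the standing hypotheses: we assumed precisely that there is no $b$--torsion $2g$--gon for $P$ and no $a$--torsion $2g$--gon for $Q$. This is an immediate contradiction. In other words, Lemma \ref{Lem: No Cycles} is essentially a restatement of the no-torsion-$2g$--gon hypothesis through the dictionary between Scharlemann cycles on $\ob{P}$ (or $\ob{Q}$) and torsion $2g$--gons for $Q$ (or $P$), which is exactly the content established in the Main Theorem's proof (the Proposition relating Gabai discs, Scharlemann cycles, and $a$--torsion $2g$--gons). I would therefore structure the proof as: (i) recall that the interior of a Scharlemann cycle in $\Lambda_\alpha$, together with the rectangle in $F$ containing its corner arcs along $\eta(b)$, is by construction a $b$--torsion $2g$--gon for $P$ — checking orientability of the rectangle-attachment exactly as in the proof following Figure \ref{Fig: TorsionRectangle}; (ii) note this contradicts the hypothesis; (iii) run the symmetric argument for $\Lambda_\beta$.

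The one point requiring care — and the place I expect the only real work — is verifying that the disc bounded by the Scharlemann cycle genuinely satisfies all clauses of the definition of a $b$--torsion $2g$--gon: that the corner arcs $\epsilon_i$ are subarcs of essential circles in $\eta(b)$, that they are mutually parallel as \emph{oriented} arcs in $F - \boundary P$, and that the rectangle attachment yields an \emph{orientable} surface. The orientation parallelism follows because $\ob{\beta}$ meets $\ob{Q}$ always with the same sign (from $\ob{\beta}$--tautness of $\ob{Q}$), so $\boundary P$ crosses the relevant rectangle in $F$ always in the same direction; and the antiparallelism forced by the parity principle on the two labels of the Scharlemann cycle makes the rectangle attachment orientable — this is verbatim the argument given for the original Scharlemann cycle / $a$--torsion $2g$--gon correspondence. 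Since the surfaces $\ob{P}, \ob{Q}$ may be non-planar, one should also confirm the disc $D \subset \ob{P}$ lies in $N$ (its interior is disjoint from $\ob{\alpha}$ since it is a subdisc of $\ob{P} \cap \eta(\text{nothing})$... more precisely $D \cap \ob{\alpha} = \nil$ because $D$ is disjoint from $\Lambda_\alpha$ and the interior vertices are the only places $\ob{\alpha}$ could meet this part of $\ob{P}$), so that it is available as a compressing/torsion disc in $N$. Once these checks are in place, the contradiction is immediate and the lemma follows.
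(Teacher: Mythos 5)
Your overall strategy matches the paper's one-line proof exactly: the disc cut off by a Scharlemann cycle is a torsion $2g$--gon, contradicting the standing hypothesis (inherited from Proposition~\ref{Prop: Nil Taut}) that $P$ and $Q$ admit none. However, you have the dictionary backwards, and the slip propagates through your argument. The interior vertices of $\Lambda_\alpha$ correspond to components of $\boundary P - \boundary \ob{P}$, that is, to places where $\ob{P}$ crosses the cocore $\ob{\alpha}$ --- \emph{not} $\ob{\beta}$, which lives in $N[b]$ and never meets $\ob{P} \subset N[a]$. So after pushing the Scharlemann-cycle disc $D$ out of $\eta(\ob{\alpha})$, its corner arcs $\epsilon_i$ lie in $\eta(a)$, not $\eta(b)$. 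Moreover, the interior of $D$ sits in $\ob{P}$ and is disjoint from $Q$, while its edges $\delta_i$ are arcs of $P \cap Q$ and hence lie on $Q$; so $D$ is a disc in $N$ with $\boundary D \subset Q \cup F$ and $\boundary D \cap F$ in $\eta(a)$. That is, a Scharlemann cycle in $\Lambda_\alpha$ produces an $a$--torsion $2g$--gon for $Q$, and symmetrically a Scharlemann cycle in $\Lambda_\beta$ produces a $b$--torsion $2g$--gon for $P$. (You also contradict yourself mid-paragraph, first asserting ``$b$--torsion $2g$--gon for $Q$'' and then ``$b$--torsion $2g$--gon for $P$''; neither is the right statement.)

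The same mislabelling infects the orientation check at the end: the coherent orientation of the corner arcs should come from $\ob{\alpha}$ meeting $\ob{P}$ always with the same sign ($\ob{\alpha}$--tautness of $\ob{P}$), not from $\ob{\beta}$--tautness of $\ob{Q}$, and it is $\boundary Q$ (not $\boundary P$) that must cross the rectangle in $\eta(a)$ coherently. By coincidence the proof still ``closes'' because the standing hypotheses rule out \emph{both} $a$--torsion $2g$--gons for $Q$ and $b$--torsion $2g$--gons for $P$, so either identification yields a contradiction. But the object you construct and the hypothesis it violates should match, and as written they do not. With the letters corrected, the argument is precisely the paper's: a Scharlemann cycle in $\Lambda_\alpha$ (resp.\ $\Lambda_\beta$) gives an $a$--torsion $2g$--gon for $Q$ (resp.\ a $b$--torsion $2g$--gon for $P$), contradicting Proposition~\ref{Prop: Nil Taut}.
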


\begin{proof}
Were there a trivial loop at an interior vertex or a Scharlemann cycle in $\Lambda_\alpha$ or $\Lambda_\beta$, the interior would be an $a$ or $b$--torsion $2g$--gon, contradicting Proposition \ref{Prop: Nil Taut}.
\end{proof}

Although we will not use it here, the next lemma may, in the future, be a useful observation. 

\begin{lemma}\label{no loops}
If $\ob{P}$ is a disc, then every loop in $\Lambda_\alpha$ is based at $\boundary \ob{P}$.
\end{lemma}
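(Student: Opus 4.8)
The plan is to argue by contradiction, exploiting the hypothesis that $\ob{P}$ is a disc together with the structural constraints on $\Lambda_\alpha$ established in the preceding lemmas. Suppose $e$ is a loop in $\Lambda_\alpha$ based at an interior vertex $v$. Since $\ob{P}$ is a disc, $e$ together with an arc of $\boundary v$ cuts $\ob{P}$ into two subdiscs; let $E$ be the subdisc of $\ob{P}$ cut off by $e$ whose interior is ``innermost'' in the sense that among all loops in $\Lambda_\alpha$ at interior vertices, $E$ contains the fewest edges of $\Lambda_\alpha$ in its interior. The first step is to show this innermost choice forces $E$ to contain no interior vertices in its interior: any edge with an endpoint at an interior vertex inside $E$ would either be a loop at that vertex (contradicting minimality) or would have to exit $E$ through $e$ or through $\boundary v$, neither of which is possible since edges of $\Lambda_\alpha$ are disjoint from $e$ except at $v$ and $\boundary v$ carries no edge-interiors.

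Next I would analyze the graph $\Lambda_\alpha \cap E$. Its interior vertices are exactly those of $\Lambda_\alpha$ lying in $\interior E$, which we have just arranged to be none; thus every edge of $\Lambda_\alpha$ meeting $\interior E$ has both endpoints on $\boundary \ob{P}$ or on $v$. Using the incompressibility of $P$ (noted just before Lemma \ref{Lem: No Cycles}, since there are no $b$--torsion $2g$--gons for $P$) I would first eliminate simple closed curves of $P \cap Q$ lying in $\interior E$ by the standard innermost-disc argument, so that $\Lambda_\alpha \cap E$ is genuinely a graph of arcs. Then, restricting to the subgraph of $\Lambda_\alpha$ contained in $E$, the hypothesis that $\ob{P}$ (hence $E$) is planar lets me apply the $P(x)$--machinery of Lemma \ref{P-condition} and Lemma \ref{Scharlemann exists} exactly as in Corollary \ref{boundary edges}: if the single interior vertex $v$ sees a label $x$ not appearing on any edge of $\Lambda_\alpha \cap E$ that terminates on $\boundary \ob{P}$, then $\Lambda_\alpha \cap E$ satisfies $P(x)$ and therefore contains an $x$--cycle, hence (after a trivial $2$--surgery) a Scharlemann cycle.

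The key tension to exploit is a counting argument at the vertex $v$: every label $1, \hdots, \mu_P$ appears at $v$, while the loop $e$ itself uses up a pair of antiparallel labels by the parity principle, and the disc $E$ is bounded partly by an arc of $\boundary v$; I would show that the number of edge-endpoints at $v$ lying inside $E$ that can escape to $\boundary \ob{P}$ is strictly smaller than $\mu_P$, so some label $x$ is trapped and the $P(x)$ condition is met on $\Lambda_\alpha \cap E$. That produces a Scharlemann cycle in $\Lambda_\alpha$, contradicting Lemma \ref{Lem: No Cycles}. The same argument applies verbatim on $\ob{Q}$-side graphs when $\ob{Q}$ is a disc, but the statement as given only concerns $\ob{P}$.

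The main obstacle I anticipate is making the ``innermost loop'' reduction fully rigorous when $E$ may meet other loops at $\boundary \ob{P}$ or when edges of $\Lambda_\alpha \cap E$ run between two points of $\boundary \ob{P}$ — one has to be careful that cutting along such boundary-to-boundary edges does not destroy the interior vertex $v$ or create a smaller counterexample of a different type. I would handle this by choosing $E$ to minimize the number of edges of $\Lambda_\alpha$ in its interior among \emph{all} loops at interior vertices (not just those cobounding $E$), and then observing that any boundary-to-boundary edge in $\interior E$ could be used to cut off an even smaller disc still containing an interior vertex with a loop, contradicting minimality — this is the step that will require the most care, and it is essentially the only place where planarity of $\ob{P}$ is genuinely used.
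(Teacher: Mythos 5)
There is a genuine gap. You set out to rebuild the $P(x)$ machinery from scratch inside the disc $E$, but you miss the much shorter route that makes the whole construction unnecessary: a loop $e$ based at an interior vertex is, by itself, already an $x$--cycle. It is a subgraph homeomorphic to a circle, and once $e$ is oriented, the label at $\boundary_- e$ is some $x \in \mc{C}$, which is all the definition of $x$--cycle requires. With that observation in hand, Lemma \ref{Scharlemann exists} applies directly (using $\ob{P}$ a disc so that the complementary component $X$ not meeting $\boundary \ob{P}$ is itself a disc, and using incompressibility of $P$ and $Q$ to eliminate circles of $P \cap Q$ in $X$) to produce a Scharlemann cycle, contradicting Lemma \ref{Lem: No Cycles}. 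That is exactly the paper's proof, in three lines.

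Beyond being a detour, your reconstruction has a concrete error. You assert that choosing $E$ to contain the fewest edges of $\Lambda_\alpha$ forces $\interior E$ to contain no interior vertices, arguing that any edge at such a vertex must be a loop or must exit $E$. That dichotomy is false: an edge from an interior vertex $w \in \interior E$ can run to another interior vertex in $\interior E$, or to $v$, without being a loop at $w$ and without crossing $e$. Minimality of the edge count rules out additional \emph{loops at interior vertices} inside $E$, not additional interior vertices. The later counting step at $v$ is also off: if $E$ is the side not containing $\boundary \ob{P}$, then \emph{no} edge inside $E$ can escape to $\boundary \ob{P}$, so the quantity you propose to bound is simply zero, and the comparison to $\mu_P$ does not engage. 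In short, the ingredients ($P(x)$, Lemma \ref{Scharlemann exists}, Lemma \ref{Lem: No Cycles}) are the right ones, but you should invoke Lemma \ref{Scharlemann exists} as a black box on the $x$--cycle $e$ itself rather than re-deriving its innermost-disc analysis.
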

\begin{proof}
Suppose that $\ob{P}$ is a disc and that there is a loop based at an interior vertex of $\Lambda_\alpha$. A component $X$ of the complement of the loop in $\ob{P}$ does not contain $\boundary \ob{P}$. The loop is an $x$--cycle and Lemma \ref{Scharlemann exists} then guarantees the existence of a Scharlemann cycle in $X$, contrary to Lemma \ref{Lem: No Cycles}. 
\end{proof}


\subsection{When the exterior of $W$ is anannular.}

We conclude this section with an application to refilling meridians of a genus 2 handlebody whose exterior is irreducible, boundary-irreducible, and anannular. It is based on the ideas in \cite{SW}. Suppose that $M$ is the exterior of a link in $S^3$. Suppose that $W \subset M$ is a genus 2 handlebody embedded in $M$. Let $N = M - \inter{W}$.

\begin{theorem}\label{Simple Ineq}
Suppose that $N$ is irreducible, boundary-irreducible and anannular. Suppose that $\alpha$ and $\beta$ are non-separating meridians of $W$ such that $\Delta > 0$. Suppose that neither $M[\alpha]$ nor $M[\beta]$ contain an essential disc or sphere. Suppose also that in $H_2(M[\alpha],\boundary M)$ there is a homology class $z_a$ which cannot be represented by a surface disjoint from $\ob{\alpha}$ and that in $H_2(M[\beta],\boundary M)$ there is a homology class $z_b$ which cannot be represented by a surface disjoint from $\ob{\beta}$. Then there is a $\nil$--taut surface $\ob{P} \subset M[\alpha]$ representing $z_a$ intersecting $\ob{\alpha}$ $p$ times and an $\nil$--taut surface $\ob{Q} \subset M[\beta]$ representing $z_b$ intersecting $\ob{\beta}$ $q$ times such that one of the following occurs:
\begin{enumerate}
\item $-2\chi(\ob{P}) \geq p(\mc{M}_b(a) - 2)$
\item $-2\chi(\ob{Q}) \geq q(\mc{M}_a(b) - 2)$
\item All of the following occur:
\begin{itemize}
\item $\ob{Q}$ is $\ob{\beta}$--taut
\item $\ob{P}$ is $\ob{\alpha}$--taut.
\item $pq\Delta \leq 18(p - \chi(\ob{P}))(q - \chi(\ob{Q}))$
\item $\Delta < \frac{9}{2}\mc{M}_a(b)\mc{M}_b(a)$
\end{itemize}
\end{enumerate}
\end{theorem}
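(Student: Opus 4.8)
The plan is to run the Main Theorem in each direction — using $P:=\ob P\cap N$ as a parameterizing surface for the $2$--handle along $b$, and $Q:=\ob Q\cap N$ for the $2$--handle along $a$ — and to fall back on an intersection--graph count (as in \cite{SW}, cf.\ \cite{CGLS}) when neither run is decisive. For the setup: since $N$ is irreducible, boundary--irreducible and anannular and $\boundary M$ is a union of tori (so the sutures $\tild\gamma$ of Section~\ref{Sutures} are empty), Lemma~\ref{Lem: Choosing Sutures} (with $c=\nil$) supplies sutures $\gamma$ on $\boundary M[\alpha]$ and $\gamma'$ on $\boundary M[\beta]$ making $(M[\alpha],\gamma)$ $\ob\alpha$--taut and $(M[\beta],\gamma')$ $\ob\beta$--taut. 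Adjoining $L_\alpha$ (resp.\ $L_\beta$) shows $M[\alpha]$ and $M[\beta]$ are link exteriors in $S^3$, so their first homology is free abelian, and since $\boundary M\ne\nil$ neither is a solid torus; as neither contains an essential sphere, both are irreducible. By Proposition~\ref{Prop: Nil Taut}, $z_a$ is carried by a conditioned $\ob\alpha$--taut surface $\ob P$; choosing $\ob P$ of least complexity and discarding null--homologous disc and sphere components, I may assume $\ob P$ has no disc or sphere components and that $P$ has no trivial disc or sphere components. Since $z_a$ is not carried disjoint from $\ob\alpha$, $\ob P$ meets $\ob\alpha$ in $p\ge1$ points, and the Proposition gives that $P$ has no compressing disc, no $b$--boundary compressing disc and no $b$--torsion $2g$--gon. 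Produce $\ob Q$, $q\ge1$, $Q$ symmetrically.

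Viewing $P$ as a parameterizing surface for the $2$--handle along $b$, one has $q=q^{*}=0$ and $\boundary_0 P=\boundary\ob P\cup(p\text{ curves parallel to }a)$, so $K(P)=\Delta_\boundary-\nu_\boundary=p\bigl(|a\cap b|-|a\cap\gamma'|\bigr)=p\,\mc M_b(a)$ (each of the $\Delta-\mc M_b(a)$ non--meridional arcs of $a-b$ meets $\gamma'$ once, the $\mc M_b(a)$ meridional ones miss it), while $\chi(P)=\chi(\ob P)-p$. If the hypothesis of the Main Theorem fails for $(N[b],\gamma')$ then, since $M[\beta]$ is not a solid torus, every conditioned $\ob\beta$--taut surface, $\ob Q$ in particular, is $\nil$--taut. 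Otherwise the Main Theorem applies to $P$: its first conclusion is barred by the previous sentence's hypotheses on $P$, its second by torsion--freeness of $H_1(M[\beta])$, so $-2\chi(P)\ge K(P)$, i.e.\ $-2\chi(\ob P)\ge p(\mc M_b(a)-2)$, which is $(1)$. The symmetric argument gives: $(2)$ holds, or $\ob P$ is $\nil$--taut. Hence $(1)$ holds, or $(2)$ holds, or both $\ob P,\ob Q$ are $\nil$--taut; in the first two cases, replacing the unconstrained surface by any $\nil$--taut representative of its class (available by irreducibility, and necessarily meeting the relevant cocore) finishes the proof. So assume $\ob P,\ob Q$ are $\nil$--taut — they are also $\ob\alpha$-- resp.\ $\ob\beta$--taut, so the first three items of $(3)$ hold — and assume in addition that $(1)$ and $(2)$ fail.

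It remains to establish the two inequalities of $(3)$. Put $P,Q$ in general position in $N$; using incompressibility of $P,Q$ and irreducibility of $N$ remove all circles of $P\cap Q$, and form the intersection graphs $\Lambda_\alpha\subset\ob P$, $\Lambda_\beta\subset\ob Q$ of Section~\ref{Intersections}. By Lemma~\ref{Lem: No Cycles} neither graph contains a Scharlemann cycle, hence no trivial loop at an interior vertex, and after a standard further reduction the reduced graphs $\overline{\Lambda_\alpha},\overline{\Lambda_\beta}$ have no monogon or bigon faces. The $p$ boundary circles of $P$ parallel to $a$ meet the $q$ boundary circles of $Q$ parallel to $b$ in $pq\Delta$ points of $F$, each an endpoint of an edge lying at interior vertices of both graphs, so $pq\Delta\le 2e$ with $e$ the number of edges. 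An Euler--characteristic estimate gives $|E(\overline{\Lambda_\alpha})|\le 3(p-\chi(\ob P))$ and $|E(\overline{\Lambda_\beta})|\le 3(q-\chi(\ob Q))$, and a parallel class of $\Lambda_\alpha$ with more than $|E(\overline{\Lambda_\beta})|$ edges would trace out an $x$--cycle, hence a Scharlemann cycle, in $\Lambda_\beta$; so each parallel class of $\Lambda_\alpha$ has at most $3(q-\chi(\ob Q))$ edges, whence $e\le 9(p-\chi(\ob P))(q-\chi(\ob Q))$ and $pq\Delta\le 18(p-\chi(\ob P))(q-\chi(\ob Q))$. Finally, failure of $(1)$ and $(2)$ gives $2(p-\chi(\ob P))<p\,\mc M_b(a)$ and $2(q-\chi(\ob Q))<q\,\mc M_a(b)$; multiplying these, combining with the previous inequality, and dividing by $pq>0$ yields $\Delta<\tfrac92\,\mc M_a(b)\,\mc M_b(a)$. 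The main obstacle is this last step: one must arrange the labelling and the reductions of $\Lambda_\alpha,\Lambda_\beta$ carefully enough that ``no Scharlemann cycle'' really does bound the size of a parallel class by the opposite reduced--graph count, and that edges meeting $\boundary M$ (rather than $F$) do not spoil the Euler--characteristic estimates — this is exactly the place where the combinatorics of \cite{SW}-type arguments must be imported.
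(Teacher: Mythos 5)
Your overall plan follows the paper's route --- run the Main Theorem in each direction with $\ob P$ and $\ob Q$ as parameterizing surfaces, and fall back on an $\cite{SW}$--style intersection-graph count --- and your calculations of $K$ (after untangling the nonstandard $q=q^{*}=0$ convention), the $pq\Delta\le 2e$ count, and the final algebra all match the paper. However, there are two genuine problems.

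First, a logical gap in how you guarantee $\nil$--tautness in conclusions (1) and (2). You start from Proposition~\ref{Prop: Nil Taut}, which gives $\ob P$ conditioned $\ob\alpha$--taut and $\ob Q$ conditioned $\ob\beta$--taut, but \emph{neither} is known a priori to be $\nil$--taut (these minimize $\chi_{\ob\alpha}$, resp.\ $\chi_{\ob\beta}$, not $\chi_{\nil}$). From the two Main Theorem runs you obtain the disjunctions ``$(1)$ or $\ob Q$ is $\nil$--taut'' and ``$(2)$ or $\ob P$ is $\nil$--taut.'' In the case where both $(1)$ and $(2)$ hold, neither of your surfaces is known to be $\nil$--taut, and you cannot simply replace $\ob P$ by a $\nil$--taut representative $\ob P'$: a $\nil$--taut representative has $-\chi(\ob P')\le -\chi(\ob P)$, i.e.\ its negative Euler characteristic is generally \emph{smaller}, while its $p'$ is uncontrolled, so the inequality $-2\chi(\ob P')\ge p'(\mc M_b(a)-2)$ need not survive the substitution. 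The paper avoids this by ordering the steps: it first produces a $\nil$--taut $\ob Q$ from Theorem~\ref{Thm: Constructing Q} (negative Euler characteristic not increased, no essential disc/sphere in $M[\beta]$), checks (2) for that $\ob Q$, and only then invokes the Main Theorem, whose \emph{failed hypothesis} yields that every conditioned $\ob\alpha$--taut surface is $\nil$--taut, so the $\ob P$ it hands you is $\nil$--taut automatically; the same happens at the next step. The order is what makes the $\nil$--tautness propagate.

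Second, the bound on the size of a parallel class is justified incorrectly. You claim a parallel class of $\Lambda_\alpha$ with more than $|E(\overline{\Lambda_\beta})|$ edges ``would trace out an $x$--cycle, hence a Scharlemann cycle, in $\Lambda_\beta$.'' That is not the paper's argument, and as stated it does not work: the pigeonhole count against $|E(\overline{\Lambda_\beta})|$ produces two edges in the \emph{same} parallel class of $\Lambda_\beta$, i.e.\ two edges parallel in both graphs; it does not produce an $x$--cycle or a Scharlemann cycle, which would require matching labels rather than parallelism. What kills two edges parallel in both graphs is \cite[Lemma 2.1]{SW}: they cobound an essential annulus in $N$, contradicting the anannularity hypothesis. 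Note your version never actually uses anannularity, which is a red flag since it is an explicit hypothesis of the theorem; you mention at the end that the $\cite{SW}$--type combinatorics must be imported, but the specific mechanism you invoke (Scharlemann cycle in $\Lambda_\beta$) is the wrong one. With the parallel-in-both/annulus argument substituted, the rest of the Euler-characteristic count (reduced graph has no loops at interior vertices, no parallel edges, faces $\ge 3$--sided, hence $E\le 3(p-\chi(\ob P))$) goes through as in the paper.
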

\begin{proof}
Notice that the right hand side of the inequalities in (1) and (2) are $K(\ob{P})$ and $K(\ob{Q})$ respectively. Choose a taut representative in $M[\beta]$ for $z_b$ and apply Theorem \ref{Thm: Constructing Q}, obtaining $\ob{Q}$. Since negative euler characteristic is not increased and $M[\beta]$ does not contain an essential disc or sphere, $\ob{Q}$ is also taut. If (1) holds, we are done, so assume that $-2\chi(\ob{Q}) < K(\ob{Q})$. Recall that $\ob{Q}$ is not disjoint from $\ob{\alpha}$. Apply the main theorem to obtain a surface $\ob{P} \subset M[\alpha]$ representing $z_a$. (The surface $\ob{P}$ is the surface $S$ in the statement of that theorem.) $\ob{P}$ is both $\ob{\alpha}$--taut and $\nil$--taut. If (2) holds, we are done, so assume $-2\chi(\ob{P}) < K(\ob{P})$. Applying the main theorem again, with $\alpha$ and $\beta$ reversed, we find a $\ob{\beta}$--taut and $\nil$--taut surface in $M[\beta]$ representing $z_b$. We may call this surface $\ob{Q}$, forgetting the previous one. Consider the the graphs formed by the intersection of $P$ and $Q$; let $\Lambda_\alpha$ be the graph on $\ob{P}$ and $\Lambda_\beta$ the graph on $\ob{Q}$. Lemma \ref{Lem: No Cycles} assures us that there is no trivial loop based at an interior vertex of either graph.

\begin{lemma}\label{Lem: SW Ineq}
\[
pq\Delta \leq 18(p - \chi(\ob{P}))(q - \chi(\ob{Q}))
\]
\end{lemma}
\begin{proof}[Proof of Lemma \ref{Lem: SW Ineq}]
By \cite[Lemma 2.1]{SW}, if two edges of $P \cap Q$ are parallel in both $\Lambda_\alpha$ and $\Lambda_\beta$, there is an essential annulus in $N$, contrary to our assumption that $N$ is anannular.  The proof proceeds as in \cite{SW}.  

Each interior boundary component of $P$ intersects $\boundary Q$, $q\Delta$ times. Thus $|\boundary Q \cap \boundary P| \geq pq\Delta$. Therefore, $\Lambda_\alpha$ and $\Lambda_\beta$ each have at least $pq\Delta/2$ edges.

\textbf{Claim: } $\Lambda_\alpha$ has at least $\frac{pq\Delta}{6(p - \chi(\ob{P}))}$ mutually parallel edges.

This claim is similar to work in \cite{GLi}. Let $\Lambda'$ be the graph obtained by combining each set of parallel edges of $\Lambda_\alpha$ into a single edge. Since $\Lambda'$ has no loops at interior vertices and no parallel edges, by applying the formula for the euler characteristic of a closed surface we obtain:
\[
\begin{matrix}
\chi(\ob{P}) + |\boundary \ob{P}| &=&  V - E + F\\
&\leq& p + |\boundary \ob{P}| - E + (2/3)E\\
&= & p + |\boundary \ob{P}| - (1/3) E\\
\end{matrix}
\]
$V$, $E$, and $F$ represent the number of vertices, edges, and faces of $\Lambda'$. Thus, $E \leq 3(p - \chi(\ob{P}))$. Let $n$ be the largest number of mutually parallel edges in $\Lambda_\alpha$. Then, since there are at least $pq\Delta/2$ edges in $\Lambda_\alpha$, we have
\[
pq\Delta/(2n) \leq E \leq 3(p - \chi(\ob{P})).
\]
The claim follows.

A similar argument shows that if a graph in $\ob{Q}$ has more than $3(q - \chi(\ob{Q}))$ edges than two of them are parallel.  Hence, since there are no mutually parallel edges in $\Lambda_\alpha$ and $\Lambda_\beta$ we must have:

$$\frac{pq\Delta}{6(p - \chi(\ob{P}))} \leq 3(q - \chi(\ob{Q}))$$ whence the lemma and the first inequality of conclusion 3 of the theorem follow.
\end{proof}

We now proceed with the proof of the theorem. Since we are assuming that neither (1) nor (2) hold, we have
\[
\begin{matrix}
-\chi(\ob{P}) & < & K(\ob{P})/2 &=& p(\mc{M}_b(a) - 2)/2\\
&&&&\\
-\chi(\ob{Q}) & < & K(\ob{Q})/2 &=& q(\mc{M}_a(b) - 2)/2
\end{matrix}
\]

Plugging into the inequality from the lemma, we obtain
\[
pq\Delta < 18pq\Big(1 + \frac{\mc{M}_b(a) - 2}{2}\Big)\Big(1 + \frac{\mc{M}_a(b) - 2}{2}\Big).
\]
Since neither $p$ nor $q$ is zero, we divide and simplify to obtain:
\[
\Delta < 9\mc{M}_b(a)\mc{M}_a(b)/2.
\]
\end{proof}

\begin{remark}
The point of the previous theorem is that, under the specified conditions, either we obtain a bound on the euler characteristic of surfaces representing the homology classes $z_a$ or $z_b$ or we obtain a restriction on the number of non-meridional arcs of $a - b$ and $b-a$. For example, suppose that discs $\alpha$ and $\beta$ are chosen so that $z_a$ is represented by a once-punctured torus, and so that $\mc{M}_b(a) = \mc{M}_a(b) = 6$. Then $-2\chi(\ob{P}) = 2 < 4p = K(\ob{P})$. Then if $z_b$ is also represented by a once punctured torus, we have $\Delta < 162$. Since $\Delta$ is even, this implies $\Delta \leq 160$.
\end{remark}

 \end{document}